\newtheorem{theorem}{Theorem}[section]
\newtheorem{lemma}[theorem]{Lemma}
\newtheorem{corollary}[theorem]{Corollary}
\theoremstyle{definition}
\newtheorem{definition}[theorem]{Definition}
\theoremstyle{remark}
\newtheorem{remark}[theorem]{Remark}
\newcommand{\pref}[1]{(\ref{#1})}
\newcommand{\product}[1]{\left\langle#1\right\rangle}
\newcommand{\torus}{\mathbb{T}}
\newcommand{\p}{\mathfrak{p}}
\newcommand{\cM}{\mathcal{M}}
\newcommand{\cX}{\mathcal{X}}
\newcommand{\cF}{\mathcal{F}}
\title{Synchronization of mean-field models on the circle}
\author{Y. Polyanskiy\thanks{Department of EECS, MIT. Email: yp@mit.edu}, P. Rigollet\thanks{Department of Mathematics, MIT. Email: rigollet@math.mit.edu}, A. Yao\thanks{Departments of EECS and Mathematics, MIT. Email: ajyao@mit.edu}}
\date{January 2025}
\begin{document}

\maketitle

\begin{abstract} This paper considers a mean-field model of $n$ interacting particles whose state
space is the unit circle, a generalization of the classical Kuramoto model. Global synchronization is said to occur if after starting from almost any
initial state, all particles coalesce to a common point on the circle. 
We propose a general synchronization criterion in terms of $L_1$-norm of the third derivative of the particle
interaction function. As an application we resolve a
conjecture for the so-called self-attention dynamics (stylized model of transformers), by showing synchronization for all $\beta \ge
-0.16$, which significantly extends the previous bound of $0\le \beta \le 1$ from
Criscitiello, Rebjock, McRae, and Boumal~\cite{synchronization_nonlinear}. We also show that
global synchronization does not occur when $\beta < -2/3$.
\end{abstract}

\section{Introduction}

In this paper, we consider a simple dynamical system consisting of $n$ interacting particles $x_1,\ldots,x_n$ situated on the unit sphere $\mathbb{S}^1$, which we identify with the standard 1-torus $\mathbb{S}^1 \simeq \torus \triangleq \mathbb{R}/2\pi\mathbb{Z}$. The dynamics are given by
\begin{equation}\tag{S1}
\label{eq:generalsystem2}
\dot{x}_i(t) = -\sum_{j=1}^n f(x_i(t)-x_j(t)), \forall i\in[n].
\end{equation}

Interest in such systems originated with the work of Kuramoto \cite{kuramoto}, who analyzed
the special case where $f(x) = \sin(x)$.\footnote{We consider only the so-called
\textit{homogeneous} Kuramoto model, without particle-dependent drift terms.} Kuramoto discovered 
a fascinating synchronization phenomenon in this system, which we define below.

\begin{definition}
\label{def:synchronization}
In \pref{eq:generalsystem2}, \textit{synchronization} occurs for a starting point $(x_i(0))_{1\leq i\leq n}$ if there exists $x^*\in [0, 2\pi)$ such that $\lim_{t\rightarrow\infty} x_i(t) \equiv x^*$ for all $i\in [n]$. We say that a pair $(A,f)$ exhibits \textit{global synchronization} if synchronization occurs for almost every starting point (with respect to the volume measure).
\end{definition}

Mathematical models of synchronization, such as Kuramoto's, are simplified abstractions of the
ubiquitous self-organization phenomena observed in nature~\cite{winfree1967biological}. While many other
models have been proposed in physics~\cite{vicsek1995novel}, biology~\cite{mirollo1990synchronization}, and
engineering~\cite{blondel2005convergence,jadbabaie2003coordination}, Kuramoto's model remains the simplest to exhibit
this effect.

The seminal paper of Kuramoto~\cite{kuramoto} has ushered a rich line of work sharpening, generalizing, and applying his original framework; see~\cite{strogatz2000kuramoto,acebron2005kuramoto}. After successive advances, global synchronization for the mean-field Kuramoto model was eventually established in \cite{Taylor2012}, which also expanded the original Kuramoto model beyond the mean-field case by allowing each particle to interact with only a subset of the others; see~\cite{expander,randomgraph} for the most recent advances on this front.

The dynamical system~\eqref{eq:generalsystem2} is an example of a \textit{mean-field} model, since
its motion can be rewritten as:
$$ \dot x_i(t) = \cX[\mu_n(t)](x_i)\,,\qquad \mu_n(t) \triangleq {1\over n} \sum_{j=1}^n
\delta_{x_j(t)}\,, \quad \cX[\mu](x) \triangleq -\int_\torus f(x-y) \mu(dy)\,,$$
where $\mu_n(t)$ denotes the empirical measure (distribution) of the particles and $\cX[\mu]$ is
the measure-dependent vector field driving each particle.
Since particles are indistinguishable in mean-field models, it suffices to study the evolution of
the empirical measure $\mu_n(t)$, which satisfies the
continuity equation:
\begin{equation}\label{eq:ctty}
	\dot \mu(t) + \mathrm{div}(\mu \cX[\mu]) = 0\,. 
\end{equation}
When $\mu(0) = \mu_n(0) = {1\over n} \sum_{j=1}^n \delta_{x_j(0)}$, studying~\eqref{eq:ctty} is
equivalent to studying~\eqref{eq:generalsystem2}, but one can also study solutions
of~\eqref{eq:ctty} starting from non-discrete measures $\mu(0)$.

For the Kuramoto mean-field model, i.e. $f(x)=\sin(x)$,~\cite{frouvelle2017long} 
showed explicit (exponential) estimates of speed of
convergence to synchronization for the dynamical system~\eqref{eq:generalsystem2} and,
subsequently,~\cite{morales2022trend} extend the exponential convergence to the more general case
of evolution of measures solving continuity equation~\eqref{eq:ctty}. Both works in fact
consider a more general case of the Kuramoto mean-field model with state space $\mathbb{S}^d$ with $d\ge 1$. 

A recent resurgence of interest in mean-field models on the sphere and torus arose from a
discovery of~\cite{mathpersp25} that with $f(x) = f_\beta(x) \triangleq \sin(x)e^{\beta \cos (x)},
\beta \in \mathbb{R}$ the resulting interacting particle system is intimately related to evolution
of internal representations in transformers~\cite{transformers}, which are modern neural networks
forming the backbone of large language models (LLMs). When $f=f_\beta$, we
call~\eqref{eq:generalsystem2} \textit{self-attention dynamics}, on which there is a fast
growing body of
work~\cite{sander2022sinkformers,clusters_selfattention,clustering_causal,metastability,synchronization_nonlinear,metastability,bruno2024emergence,abella2024asymptotic,burger2025analysis,castin2025unified,chen2025quantitative}.
Despite the complexity of practical transformers, the simple model of self-attention dynamics is
remarkably effective at predicting how signals propagate through internal layers. From the
practical point of view, global synchronization is an abstraction of a
complex phenomenon in LLMs known as clustering or oversmoothing,
e.g.~\cite{feng2022rank,shi2022revisiting,dovonon2024setting,koubbi2024impact}.

The work~\cite{mathpersp25} establishes global synchronization whenever
$\beta =O(1/n)$ or $\beta = \Omega(n^2)$ (in both cases $\beta\ge0$ is also required) and for all state spaces $\mathbb{S}^d$,
$d\ge 1$. Shortly afterwards,~\cite{synchronization_nonlinear} made an important observation that an
earlier work of~\cite{markdahl} in fact shows global synchronization for all $\beta\ge 0$ and
$d\ge 2$. For $d=1$, the authors of \cite{synchronization_nonlinear} improved the argument
of~\cite{mathpersp25} and showed synchronization for $\beta\le 1$. In~\cite{clustering_causal}, the
results on global synchronization are further generalized but under the additional assumption that
the summation defining $\dot x_i$ in~\eqref{eq:generalsystem2} only extends to $j<i$, which
corresponds to a simple model of the ubiquitous
``decoder-only'' LLMs implementing next-token prediction.

\textbf{Meta-stability.} 
The novel aspect of self-attention dynamics compared to Kuramoto's
model is the emergence of meta-stability above a certain critical value of $\beta>0$. Specifically, ~\cite{mathpersp25} observed empirically
that once $\beta$ is sufficiently large, then particles initialized
i.i.d. uniformly evolve in two phases: 
first, they quickly group into $\asymp\sqrt{\beta}$ tight clusters and then over a much slower
time-horizon the clusters progressively merge until only one remains, thus attaining global
synchronization. In~\cite{metastability} it is confirmed that localized groups of
particles contract exponentially quickly to their common center. Bruno, Pasqualotto, and Agazzi in \cite{bruno2024emergence} showed that after initializing the particles
$x_i(0)$ i.i.d. from the uniform measure on the circle, at time $t\asymp \log n$ the empirical measure
$\mu_n(t)$ develops periodic lumps with high probability. More explicitly, they show that for any $0<\delta \ll 1$, there exists a ${2\pi\over k}$-periodic
probability distribution $\nu_{per}(t)$, with $k\asymp\sqrt{\beta}$,  which is $\delta$-away from uniform (as measured, for
example, by the Wasserstein
distance $W_1$) and $W_1(\mu_n(t), \nu_{per}(t)) \to 0$ in probability as $n \to \infty$ for 
$t = t(n,\delta,\beta) \asymp \log n$.

In the present work, we show that with probability 1, for any fixed $n$ and $\beta\ge 0$, we must have
that $\mu_n(t) \to \delta_{x_\infty}$ as $t \to \infty$. In turn, this implies that although the ${2\pi\over k}$-periodic
phase is rather long-lived, it will eventually collapse, which implies that it is meta-stable. 

\textbf{Contributions.} 
In addition to~\eqref{eq:generalsystem2}, in the context of Transformers a so-called ``normalized'' version of this dynamics is also
important. This generalization of~\eqref{eq:generalsystem2} can be stated in the following form:
\begin{equation}\tag{S2}
\label{eq:systemnormalized}
\dot{x}_i(t) = -\frac{1}{g_i(x_1(t),\ldots,x_n(t))} \sum_{j=1}^n f(x_i(t)-x_j(t)), \, 1\leq i\leq n,
\end{equation}
where $g: \mathbb{T}^n\rightarrow \mathbb{R}_{>0}^n$ is some smooth function.
In this work, we propose a general criterion for systems~\eqref{eq:generalsystem2}
and~\eqref{eq:systemnormalized} with state-space
$\mathbb{S}^1=\mathbb{T}$ to be globally synchronizing. As an application, we prove (a) global synchronization for transformers
(i.e. $f=f_\beta$) for all $\beta \ge 0$, thus completing the study of this class of interaction functions; (b) initiate
the study of $\beta < 0$ and show global synchronization for $\beta \geq -0.16$ and
non-synchronization for $\beta < -2/3$. Finally, in Section~\ref{sec:generalizedsystem} we extend
the criterion to a certain class of non-mean-field systems.

\textbf{Organization.} Section~\ref{sec:main} states all of our results formally. Section~\ref{sec:mainproof} contains proof of the main criterion for stability of system~\eqref{eq:generalsystem2}, i.e. Theorem~\ref{thm:main}. Section~\ref{sec:normalized} extends the results to the normalized system~\eqref{eq:systemnormalized}. Section~\ref{sec:application} verifies that the general criterion in Theorem~\ref{thm:main} applies to Transformer dynamics on the circle ($f=f_\beta$ for all $\beta > -0.16$). Finally, in Section~\ref{sec:generalizedsystem} we further generalize the results to the dynamics where particles are aggregated with unequal weights.

\textbf{Acknowledgments.} PR is supported by NSF grants DMS-2022448 and \\CCF-2106377.

\section{Main results}\label{sec:main}

Given a smooth vector field $\cF:\cM \to T\cM$ on a manifold $\cM$, a dynamical system solves the ordinary differential equation (ODE): 
    $$ \dot {\mathbf{x}} = \cF(\mathbf{x})\,. $$
Point $\mathbf{x}$ is called stationary if $\cF(\mathbf{x})=0$, since started from this point the system does not move: $\dot {\mathbf{x}} = 0$. If the trajectory of a dynamical system converges, then the limiting point should necessarily be a stationary point. We call a stationary point $\mathbf{x}$ \textit{locally unstable} if the Jacobian of $\cF$ at $\mathbf{x}$ has an eigenvalue with positive real part. Note that this implies that for a small neighborhood $U$ around $\mathbf{x}$, almost all initializations $\mathbf{x}_0 \in U$ result in trajectories that escape from $U$.

The dynamical system~\eqref{eq:generalsystem2} that we consider here corresponds to taking $\cM = \torus^n$ and the vector field with $i$-th
component being
$$ \cF(\mathbf{x})_i = \mathcal{X}[\mu_n](x_i) = - \int f(x_i - y) d\mu_n(y)\,.$$
As we discussed, mean-field models can also be thought of as $n$ exchangeable particles (each with
state space of $\torus$) each driven by a time-dependent vector field $\cX[\mu_n(t)]: \torus \to
T\torus$, which is a function of the empirical measure $\mu_n$, cf.~\eqref{eq:ctty}.

\begin{theorem}
\label{thm:main} Consider the mean-field model~\eqref{eq:generalsystem2} on $\torus$. Let $\tau\in(0,\pi]$ satisfy $f'(x) < 0$ for all $x\not\in[-\tau,\tau]$. If 
\[
\tau\int_{-\pi}^\pi |f'''(x)|_+ dx\leq 4\left(1+\frac{\tau}{2\pi}\right)f'(0),
\]
then every stationary point $(x_1, \ldots, x_n)$  of the system~\eqref{eq:generalsystem2} on $\mathbb{T}^n$ is either locally unstable or synchronized (i.e. $x_1=\cdots=x_n$).
\end{theorem}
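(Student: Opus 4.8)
The plan is to analyze the Jacobian of $\cF$ at an arbitrary stationary configuration $\mathbf{x} = (x_1,\dots,x_n)$ that is not synchronized, and exhibit a direction along which the quadratic form is positive, which (since the Jacobian of a gradient-like mean-field system is symmetric) forces a positive eigenvalue and hence local instability. Concretely, the $(i,j)$ entry of the Jacobian is $-f'(x_i-x_j)$ for $i\neq j$ and $\sum_{k\neq i} f'(x_i-x_k)$ on the diagonal; so for a perturbation vector $v = (v_1,\dots,v_n)$ one gets the quadratic form $Q(v) = \tfrac12 \sum_{i,j} f'(x_i-x_j)(v_i-v_j)^2$ (up to sign conventions). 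The goal is to find $v$ with $Q(v) > 0$ whenever the $x_i$ are not all equal. The natural test perturbation is a rigid infinitesimal ``spreading'' or a low-frequency Fourier mode of the empirical measure; equivalently one can try $v_i = h(x_i)$ for a cleverly chosen function $h$ on the torus, reducing the problem to showing positivity of an integral functional $\iint f'(x-y)(h(x)-h(y))^2 \, d\mu_n(x)\,d\mu_n(y)$.

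The key analytic step is to control $f'$ near the diagonal against its (negative) bulk behavior using the hypothesis on $f'''$. Since $f'(x) < 0$ for $x\notin[-\tau,\tau]$, the only positive contributions to $Q$ come from pairs with $|x_i-x_j|\le\tau$. I would Taylor-expand $f'$ around $0$: $f'(x) = f'(0) + \tfrac12 f'''(0)x^2 + \cdots$, but more robustly, write $f'(x) \le f'(0) + \tfrac12 x^2 \sup f''' $-type bounds — and this is exactly where $\int_{-\pi}^\pi |f'''|_+\,dx$ enters, since bounding the curvature defect of $f'$ on $[-\tau,\tau]$ by a second antiderivative of $|f'''|_+$ is what produces the factor $\tau\int|f'''|_+$. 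The constant $4(1+\tau/2\pi)$ should emerge from comparing the ``mass'' that can be concentrated within distance $\tau$ (at most a $(1+\tau/2\pi)$-type fraction, using that intervals of length $\tau$ on a circle of circumference $2\pi$ overlap controllably) against the total mass, combined with a factor of $4$ from $(v_i-v_j)^2 \le 4\|v\|_\infty^2$ or from the $\tfrac12$ in the quadratic form together with a symmetrization.

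I would carry this out in the following order: (1) compute the Jacobian and the associated symmetric quadratic form $Q(v)$ at a stationary point; (2) choose the test direction $v$ — I expect the right choice is $v_i = \sin(x_i - c)$ or $v_i = $ (signed indicator-type smoothing) for a well-placed center $c$, or possibly argue via the continuity equation that a suitable perturbation of $\mu_n$ decreases some Lyapunov functional; (3) split $Q(v)$ into the near-diagonal part ($|x_i-x_j|\le\tau$), bounded below by $-(\text{something})\cdot f'(0)^{-1}$-type quantity, wait—bounded above in absolute value—and the far part, which is strictly negative and bounded away from zero unless the configuration is synchronized; (4) invoke the $f'''$ hypothesis to show the near-diagonal error cannot overwhelm the far part; (5) conclude $Q(v)>0$, so the Jacobian has a positive eigenvalue, giving local instability. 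The main obstacle I anticipate is step (2)–(3): one must pick the perturbation and the covering/overlap counting so that the bound is uniform over \emph{all} non-synchronized stationary configurations (including highly clustered ones, e.g. the $\asymp\sqrt\beta$-lump metastable states), and getting the sharp constant $4(1+\tau/2\pi)$ rather than a lossy one — this likely requires a careful averaging over the choice of center $c$ (integrating the test function's parameter over the torus) so that each pair $(i,j)$ is weighted exactly according to $|x_i-x_j|$, turning the discrete combinatorial bound into the clean integral inequality in the statement.
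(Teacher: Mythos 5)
Your starting point is consistent with the paper's: at a stationary point the linearization gives the quadratic form $-\tfrac12\sum_{i,j}f'(x_i-x_j)(v_i-v_j)^2$, and absence of a positive direction forces $\sum_{i,j}f'(x_i-x_j)(v_i-v_j)^2\ge 0$; the paper uses this only through the cut vectors $v_i=\mathbf{1}\{i\in S\}$, i.e.\ condition \pref{eq:condition2}. But the core of your plan, steps (2)--(4), has a genuine gap. Your preferred test direction $v_i=\sin(x_i-c)$, averaged over $c$, reduces the question to the sign of $\sum_{i,j}f'(x_i-x_j)(1-\cos(x_i-x_j))$; this is exactly the ``pull all particles in one direction'' device that settles $\mathbb{S}^d$ for $d\ge 2$ (Markdahl et al.), and the paper's footnote points out it does not suffice on the circle. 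The problematic configurations are those where several distinct clusters lie within distance $\tau$ of each other: there the near-diagonal terms are genuinely positive, your assertion that the far part is ``strictly negative and bounded away from zero unless synchronized'' is unproved (it is precisely the quantitative crux), and no single smooth low-frequency perturbation is known to beat them uniformly. Moreover, a Taylor expansion of $f'$ at $0$ is not where $\tau\int|f'''|_+$ comes from, and the constant $4(1+\tau/2\pi)$ is not an interval-overlap or $\|v\|_\infty$ count.

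What the paper actually does, and what is missing from your outline, is a quantitative use of \emph{stationarity} in tandem with cut-stability. Writing $\varphi(x)=\sum_j f'(x-x_j)$ and letting $\theta_1<\dots<\theta_K$ be the distinct cluster positions, stationarity \pref{eq:condition1} gives $\int_{\theta_i}^{\theta_{i+1}}\varphi(x)\,dx=0$ on every gap, while \pref{eq:condition2} applied to each cluster gives $\varphi(\theta_i)\ge N_i f'(0)$. Integrating $\varphi''$ against the parabola $\Psi_i(x)=\frac{a_i}{2}(x-\theta_i)(\theta_{i+1}-x)$ (vanishing at the endpoints, constant second derivative) and using the vanishing integral turns these endpoint bounds into $\frac{a_i}{2}\Delta_i(\varphi(\theta_i)+\varphi(\theta_{i+1}))\le\int\Psi_i\,|\varphi''|_+$; since $\Psi_i\le a_i\Delta_i^2/8$, this is the source of the factor $4$. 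The bound $|\varphi''|_+\le\sum_j|f'''(\cdot-x_j)|_+$ brings in $\int|f'''|_+$, and \Cref{lemma:twoconsecutive_general} (cut along a line crossing two large gaps) shows all gaps except at most one are $\le\tau$, the exceptional one being trivially $\le 2\pi$ --- this is the origin of $(1+\tau/2\pi)$, improved to $(1+\tau/M)$ when $\tau_{\max}<M$ is known. Summing over all gaps with $a_i=1/\Delta_i$ produces a contradiction with the hypothesis for any non-synchronized configuration. So the argument is a global identity-plus-inequality over all clusters and gaps simultaneously, not the exhibition of one perturbation controlled by a near/far split; to repair your proposal you would need to import both the stationarity identity on each gap and the parabolic integration-by-parts step, at which point you would essentially be reproducing the paper's proof.
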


This characterization constitutes the main result of our work. However, to establish global synchronization in systems~\eqref{eq:generalsystem2} and~\eqref{eq:systemnormalized}, we require two additional (though now standard) ingredients: {\L}ojasiewicz's theorem and the center-stable manifold theorem.

\subsection{Gradient ascent dynamics}

The first obstacle to synchronization could be the emergence of limit cycles. It turns out,
however, that Transformer dynamics is special since it can be  written as a gradient ascent for a
certain energy function $\mathsf{E}(\mathbf{x})$, see~\cite[(3.5)]{mathpersp25}
and~\eqref{eq:energy_sad} below. Consequently, as explained in \cite[Appendix A]{mathpersp25} (also \cite[Corollary 5.1]{lohe}), classical {\L}ojasiewicz's theorem \cite{lojasiewicz} guarantees that as long as $\mathsf{E}$ is real analytic, the gradient ascent dynamics $\dot {\mathbf{x}} = \nabla_\cM \mathsf{E}(\mathbf{x})$ over a  compact Riemannian manifold must converge to some stationary point $\mathbf{x}_\infty$. (We denote $\nabla_\cM$ the Riemannian gradient on a manifold, see \cite{intromanifolds} for details.)

The next issue is that even for $f=f_\beta$, the system~\eqref{eq:generalsystem2} has  many stationary points other than the synchronized ones (for example, when the particles are placed at the vertices of a regular $n$-gon). While Theorem~\ref{thm:main} ascertains those must be locally unstable, we still need to rule out existence of those serendipitous initial conditions that would lead to those limiting configurations. This is the content of another classical ingredient: the center-stable manifold theorem, which indeed shows that the limiting stationary point is almost always a stable one~\cite[Theorem III.7 and Exercise III.3]{shub}.

Putting these two ideas together, we get the following result that together with Theorem~\ref{thm:main} ascertain convergence to the synchronized states in $f_\beta$ dynamics with or without normalization.

\begin{lemma}[{\cite[Lemma A.1]{mathpersp25}}]
\label{lemma:converge}
Let $\cM$ be a compact Riemannian manifold and let $\mathsf{E}:\cM\rightarrow\mathbb{R}$ be a smooth function. The set of initial conditions $X_0\in\cM$ for which the gradient ascent system
\[
\begin{cases}
& \dot{\mathbf{x}}(t) = \nabla_\cM \mathsf{E}(\mathbf{x}(t)), \\
& \mathbf{x}(0) = X_0
\end{cases}
\]
converges to a critical point of $\mathsf{E}$ at which the Hessian of $\mathsf{E}$ has a positive eigenvalue is of volume zero.
\end{lemma}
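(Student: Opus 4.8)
The plan is to run the textbook ``gradient flow avoids linearly unstable equilibria'' argument via the center-stable manifold theorem, with one extra twist to accommodate the fact that the set of bad critical points need not be discrete. Since $\cM$ is compact, the gradient ascent flow is complete; let $\phi_t:\cM\to\cM$ denote its time-$t$ map and set $\phi\triangleq\phi_1$, a $C^\infty$ diffeomorphism of $\cM$ for which every critical point of $\mathsf{E}$ is a fixed point. Write
\[
S\triangleq\bigl\{p\in\cM:\ \nabla_\cM\mathsf{E}(p)=0,\ \nabla^2_\cM\mathsf{E}(p)\text{ has a positive eigenvalue}\bigr\},
\]
so that the goal is to show that the set of initial conditions whose forward trajectory converges to a point of $S$ has zero volume.

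First I would fix a single $p\in S$ and record the local picture. Since $p$ is a fixed point, the derivative of the flow along $p$ solves the variational equation with constant coefficient operator $H_p\triangleq\nabla^2_\cM\mathsf{E}(p)$ (the Hessian, a self-adjoint endomorphism of $T_p\cM$), so $D\phi(p)=\exp(H_p)$ has an eigenvalue $e^{\lambda}>1$ for $\lambda>0$ a positive eigenvalue of $H_p$; in particular the unstable subspace of $D\phi(p)$ is nonzero, hence the center-stable subspace has dimension at most $\dim\cM-1$. The center-stable manifold theorem \cite[Theorem III.7 and Exercise III.3]{shub} then supplies an open neighborhood $U_p\ni p$ and an embedded $C^1$ disk $W^{cs}_p\subseteq U_p$ of dimension $\le\dim\cM-1$ with the trapping property: if $\phi^n(x)\in U_p$ for all integers $n\ge0$, then $x\in W^{cs}_p$. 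Being of positive codimension, $W^{cs}_p$ has zero volume in $\cM$, and since a $C^1$ diffeomorphism carries null sets to null sets, so does each $\phi^{-N}(W^{cs}_p)$, $N\ge0$.

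Next I would globalize over $S$. The family $\{U_p\}_{p\in S}$ is an open cover of $S$; as $\cM$ is a compact (hence second-countable) manifold, $S$ with the subspace topology is Lindel\"of, so some countable subfamily already covers it: $S\subseteq\bigcup_{k\ge1}U_{p_k}$ with $p_k\in S$. Now if $\phi_t(X_0)\to p\in S$, choose $k$ with $p\in U_{p_k}$; openness of $U_{p_k}$ yields an integer $N$ with $\phi_t(X_0)\in U_{p_k}$ for all $t\ge N$, and applying the trapping property at $p_k$ to the point $\phi_N(X_0)$ (whose forward $\phi$-orbit $\{\phi_{N+n}(X_0)\}_{n\ge0}$ stays in $U_{p_k}$) gives $\phi_N(X_0)\in W^{cs}_{p_k}$, i.e.\ $X_0\in\phi^{-N}(W^{cs}_{p_k})$. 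Hence the set of bad initial conditions is contained in $\bigcup_{k\ge1}\bigcup_{N\ge0}\phi^{-N}(W^{cs}_{p_k})$, a countable union of volume-zero sets, and is therefore of volume zero.

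The step I expect to require the most care is precisely this last globalization: because $S$ can be a continuum, one cannot form a union of center-stable manifolds over all $p\in S$. The resolution is the observation that it suffices to trap the \emph{tail} of each convergent trajectory inside a single one of the countably many $U_{p_k}$, after which the center-stable manifold at $p_k$ constrains that trajectory regardless of which point of $S\cap U_{p_k}$ it converges to. The remaining ingredients --- completeness of the flow, the identity $D\phi(p)=\exp(\nabla^2_\cM\mathsf{E}(p))$ at a fixed point, the fact that positive-codimension $C^1$ submanifolds and their diffeomorphic images are null, and Lindel\"of-ness of subsets of a compact manifold --- are routine.
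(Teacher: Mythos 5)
Your argument is correct and is essentially the proof behind the result the paper simply cites from \cite[Lemma A.1]{mathpersp25}: linearize the time-one map at a bad critical point, invoke the center-stable manifold theorem of \cite[Theorem III.7 and Exercise III.3]{shub} to get a positive-codimension trapping disk, and handle non-isolated critical points by a countable (Lindel\"of) cover together with preimages under the flow. No gaps; this matches the intended approach.
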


With these preparations, we are ready to derive our main global synchronization results by
applying Lemma~\ref{lemma:converge} and Theorem~\ref{thm:main} to
systems~\eqref{eq:generalsystem2} with $f(x) = h(\cos(x))\sin(x)$. Indeed, for such systems we can
see that dynamics becomes a gradient ascent on the potential
\begin{equation}\label{eq:energy}
	E(\mathbf{x}) = \sum_{i,j} \phi(\cos(x_i-x_j))\,,\qquad \phi(t) = \int_0^t h(s) ds\,.
\end{equation}
Note that a critical point $\mathbf{x}$ is locally unstable if and only if the Hessian at
$\mathbf{x}$ has positive eigenvalue since the Hessian is symmetric, thus enabling application of
Theorem~\ref{thm:main}. See Theorem~\ref{thm:ascentnormal} shortly, for the full statement.

\subsection{Adjusted gradient ascent}

It turns out that the method discussed above is applicable not only to systems of the
type~\eqref{eq:generalsystem2}, but also to more general systems with particle-dependent
normalization factors, i.e. systems of the type~\eqref{eq:systemnormalized}. We need to consider
this extension because the the simplified model of
self-attention
(see~\eqref{eq:selfattentionnormal} below) has precisely such form.

\begin{theorem}
\label{thm:ascentnormal}
Assume that $f(x)=\sin(x)h(\cos(x))$, where $h$ is a real-analytic function on an open set containing $[-1,1]$ and $\tau\int_{-\pi}^\pi |f'''(x)|_+ dx\leq 4\left(1+\frac{\tau}{2\pi}\right)f'(0)$, where $\tau$ is as in Theorem~\ref{thm:main}. Then, global synchronization occurs in \pref{eq:systemnormalized}.
\end{theorem}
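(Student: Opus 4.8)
The plan is to exhibit the normalized system~\eqref{eq:systemnormalized} with $f(x)=\sin(x)h(\cos(x))$ as a Riemannian gradient ascent of the energy $E$ from~\eqref{eq:energy}, and then run it through the {\L}ojasiewicz / center-stable-manifold machinery together with Theorem~\ref{thm:main}. First I would set $\phi(t)=\int_0^t h(s)\,ds$ and $E(\mathbf x)=\sum_{i,j}\phi(\cos(x_i-x_j))$ and record the chain-rule identity $\partial_{x_i}E(\mathbf x)=-2\sum_j f(x_i-x_j)$ (using $\phi'=h$, $f=\sin\cdot(h\circ\cos)$, and evenness of $\cos$, which symmetrizes the $k=i$ and $l=i$ contributions). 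Then~\eqref{eq:systemnormalized} reads $\dot x_i=\tfrac{1}{2g_i(\mathbf x)}\partial_{x_i}E(\mathbf x)$ and~\eqref{eq:generalsystem2} reads $\dot x_i=\tfrac12\partial_{x_i}E$. Equipping $\torus^n$ with the metric $\tilde G(\mathbf x):=\operatorname{diag}\bigl(2g_1(\mathbf x),\dots,2g_n(\mathbf x)\bigr)$ --- well-defined, smooth, and positive-definite because $g$ is smooth and $\mathbb R_{>0}^n$-valued --- makes~\eqref{eq:systemnormalized} precisely the gradient ascent $\dot{\mathbf x}=\nabla_{\cM}E(\mathbf x)$ on the compact Riemannian manifold $\cM=(\torus^n,\tilde G)$.

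Next I would observe that $E$ is real-analytic on $\torus^n$: $h$ is real-analytic on a neighborhood of $[-1,1]$, hence so is $\phi$, and $(x_i,x_j)\mapsto\cos(x_i-x_j)$ is real-analytic with values in $[-1,1]$. Since $\torus^n$ is compact, every trajectory is bounded (hence globally defined) and $E$ is bounded along it, so {\L}ojasiewicz's theorem (as recalled above, cf.~\cite[Appendix A]{mathpersp25}, \cite[Corollary 5.1]{lohe}, \cite{lojasiewicz}) applies and every trajectory of~\eqref{eq:systemnormalized} converges to some critical point $\mathbf x_\infty$ of $E$. It is worth noting here that the {\L}ojasiewicz convergence argument requires only real-analyticity of $E$, not of the metric: $\tilde G$ enters merely through a uniform two-sided comparison of $\|\nabla_{\tilde G}E\|_{\tilde G}$ with the coordinate gradient, which holds by compactness and positivity.

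The main point --- and the step needing the most care --- is to line up the notions of instability. Because $g_i>0$, the critical points of $E$ are exactly the stationary points of~\eqref{eq:generalsystem2} and of~\eqref{eq:systemnormalized}. Writing $H(\mathbf x)=\bigl(\partial_{x_i}\partial_{x_j}E(\mathbf x)\bigr)_{i,j}$ for the symmetric matrix of second partials, at a critical point the Jacobian of the~\eqref{eq:generalsystem2}-vector field is $\tfrac12 H$, so ``locally unstable'' in the sense of Theorem~\ref{thm:main} means precisely that $H$ has a positive eigenvalue. On the other hand, at a critical point the Riemannian Hessian of $E$ on $\cM$ coincides with $H$ in coordinates (the Christoffel corrections multiply $\nabla E=0$), and its eigenvalues relative to $\tilde G$ are those of $\tilde G^{-1}H$, which --- $\tilde G^{-1}$ being symmetric positive-definite --- have the same signs as those of $H$ by Sylvester's law of inertia. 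Hence ``the Hessian of $E$ on $\cM$ has a positive eigenvalue'' is equivalent to ``locally unstable for~\eqref{eq:generalsystem2}''. Invoking Theorem~\ref{thm:main} (whose hypothesis holds by assumption, with the prescribed $\tau$) I conclude: every critical point of $E$ at which the Hessian of $E$ on $\cM$ has no positive eigenvalue must be synchronized, i.e.\ $x_1=\cdots=x_n$.

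Finally I would apply Lemma~\ref{lemma:converge} with $\cM=(\torus^n,\tilde G)$ and $\mathsf E=E$: the set of initial conditions whose~\eqref{eq:systemnormalized}-trajectory converges to a critical point at which the Hessian of $E$ has a positive eigenvalue has zero volume, and this is the same null set for the standard measure on $\torus^n$, since the Riemannian volume of the smooth positive-definite metric $\tilde G$ is mutually absolutely continuous with it. Combining with the {\L}ojasiewicz convergence from the second step, for almost every initial condition the trajectory of~\eqref{eq:systemnormalized} converges to a critical point of $E$ with no positive Hessian eigenvalue, hence (by the third step) to a synchronized configuration $x^\ast=x_1=\cdots=x_n$, so that $\lim_{t\to\infty}x_i(t)=x^\ast$ for all $i$; this is exactly global synchronization for~\eqref{eq:systemnormalized}. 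I expect no serious obstacle beyond the bookkeeping in the third step --- reconciling the Jacobian-based notion of instability in Theorem~\ref{thm:main} with the Riemannian-Hessian notion used in Lemma~\ref{lemma:converge}, across the passage from~\eqref{eq:generalsystem2} to the normalized metric $\tilde G$.
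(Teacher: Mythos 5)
Your proposal is correct and follows essentially the same route as the paper: realize \eqref{eq:systemnormalized} as Riemannian gradient ascent of the interaction energy with respect to a weighted (smooth, positive) metric, combine {\L}ojasiewicz convergence with Lemma~\ref{lemma:converge}, use metric-independence of the sign of the Hessian at a critical point (the paper's Lemma~\ref{lemma:hessian}, which you reprove via Sylvester's law of inertia), and conclude with Theorem~\ref{thm:main}. The only cosmetic differences are that you work in angular coordinates on $\torus^n$ rather than on $(\mathbb{S}^1)^n$, and that you pass directly from ``no positive Hessian eigenvalue'' to ``not locally unstable'' for \eqref{eq:generalsystem2} via the symmetric Jacobian $\tfrac12 H$, whereas the paper verifies the cut-stability condition \eqref{eq:condition2} explicitly through a second-derivative computation along rotations before invoking Theorem~\ref{thm:main}.
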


The full proof of this result is given in Section~\ref{sec:normalized} below, but the idea is
simple. First, when $f(x)=\sin(x)h(\cos(x))$ and normalization factors $g_i=1$, then as we have
seen in the previous section we are dealing with a gradient ascent on the
potential~\eqref{eq:energy}, which thus must converge (generically) to a locally stable critical
point. In the case of $g_i\not= 1$, we can follow the idea suggested in \cite[Section 3.4 and
Remark B.1]{mathpersp25}: by introducing a non-flat Riemannian metric on $(\mathbb{S}^1)^{\otimes n}$ we
can make sure that the gradient of the same energy~\eqref{eq:energy} results in the normalized
dynamics~\eqref{eq:systemnormalized}. Then, this case also reduces to an application of
Theorem~\ref{thm:main}.

We mention that we further generalize the last result to systems where each particle contributes
to the RHS in~\eqref{eq:systemnormalized} with its own, particle-dependent weight factor. See
Section~\ref{sec:generalizedsystem} for more.

\subsection{Self-attention dynamics}
\label{subsec:selfattention}

Next, we discuss an application of the main results to self-attention dynamics. Recall that the latter~\cite[(USA)]{mathpersp25} is defined as
\begin{equation}\tag{T1}
\label{eq:selfattention}
\dot{x}_i(t) = -\sum_{j=1}^n e^{\beta\cos(x_i(t)-x_j(t))}\sin(x_i(t)-x_j(t)), \, 1\leq i\leq n,
\end{equation}
which corresponds to taking $f(x)=f_\beta(x)=\sin(x) e^{\beta \cos(x)}$. 
The global synchronization conjectured in~\cite{mathpersp25} for all $\beta \ge 0$ was only shown for $\beta \le 1$ and $\beta \ge \Omega(1/n)$, cf.~\cite{synchronization_nonlinear}. In this section we resolve the conjecture in full and in fact even extend it to a portion of $\beta < 0$. 

Define the number 
$$
a(\beta) = \inf\{\tau: f_\beta'(x) < 0\,, \  \forall\, x\in(\tau,\pi], \tau \in [0,\pi]\}\,. 
$$
Theorem~\ref{thm:ascentnormal} implies that whenever
$$ 4 {1+ {a(\beta) \over 2\pi} \over a(\beta) \int_{-\pi}^\pi |f'''(x)|_+ dx} > 1,$$
global synchronization occurs. In fact, using specific properties of $f_\beta$ we can strengthen the criterion in Theorem slightly, see Corollary~\ref{cor:main} below, and guarantee global synchronization under the weaker assumption of 
$$ 4 {1+ {a(\beta) \over \pi} \over a(\beta) \int_{-\pi}^\pi |f'''(x)|_+ dx} > 1\,.$$
The quantity on the left-hand side is termed the \textit{synchronization ratio} and we numerically plot it on Fig.~\ref{fig:synchronization}. As one can see the criterion indeed is verified in the region of $\beta \geq -0.25$. We formally verify the inequality in a slightly narrower region of $\beta \geq -0.16$ below.

\begin{figure}[!h]
\centering
\includegraphics[width=\linewidth]{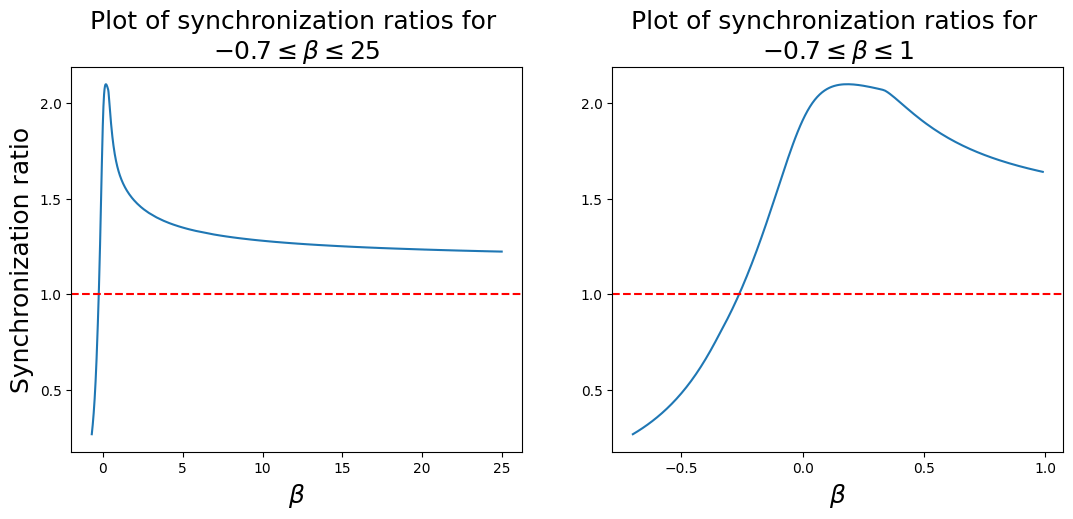}
\caption{The figure plots the synchronization ratio $ \product{|f'''|_+, \tau}_{L_2}^{-1}4\left(1+\frac{\tau}{\pi}\right) f'(0)$ from \Cref{cor:main} with $f(x)$ set as $\sin(x)e^{\beta(\cos(x)-1)}$ and $M$ set as $\pi$. A ratio greater than one indicates that we have determined that global synchronization occurs.}
\label{fig:synchronization}
\end{figure}

\begin{corollary}
\label{cor:conjecture}
Suppose $\beta\geq -0.16$. Then, global synchronization occurs in \pref{eq:selfattention}.
\end{corollary}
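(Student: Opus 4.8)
The plan is to apply Corollary~\ref{cor:main} with $f=f_\beta$ and verify its synchronization-ratio inequality for every $\beta\ge -0.16$; since that ratio is invariant under $f\mapsto cf$, I would first normalize to $\tilde f_\beta(x)=\sin(x)e^{\beta(\cos x-1)}$, so that $\tilde f_\beta'(0)=1$, reducing the claim to
\[
4\Bigl(1+\tfrac{a(\beta)}{\pi}\Bigr) > a(\beta)\int_{-\pi}^{\pi}\bigl|\tilde f_\beta'''(x)\bigr|_+\,dx, \qquad \beta\ge -0.16 .
\]
The left-hand side needs only an explicit handle on $a(\beta)$. From $\tilde f_\beta'(x)=e^{\beta(\cos x-1)}(\beta\cos^2 x+\cos x-\beta)$ one sees that on $[-1,1]$ the quadratic $\beta c^2+c-\beta$ has a single relevant root $c^*(\beta)=\frac{-1+\sqrt{1+4\beta^2}}{2\beta}$ (with $c^*(0):=0$), that $\tilde f_\beta'<0$ exactly when $\cos x<c^*(\beta)$, and hence $a(\beta)=\arccos c^*(\beta)$, an explicit decreasing function with $a(0)=\pi/2$ and $a(\beta)\downarrow 0$ as $\beta\to\infty$; moreover $a(\beta)\in(0,\pi)$ for all $\beta\ge-0.16$, so the hypothesis on $\tau$ in Corollary~\ref{cor:main} holds with $\tau=a(\beta)$.

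The heart of the matter is to bound $\int_{-\pi}^{\pi}|\tilde f_\beta'''|_+\,dx$ tightly. Two further differentiations give $\tilde f_\beta''(x)=e^{\beta(\cos x-1)}\sin(x)\,(\beta^2\sin^2 x-3\beta\cos x-1)$ and $\tilde f_\beta'''(x)=e^{\beta(\cos x-1)}P_\beta(\cos x)$ with $P_\beta$ the explicit quartic $P_\beta(c)=-\beta^3 c^4-6\beta^2 c^3+(2\beta^3-7\beta)c^2+(6\beta^2-1)c+(4\beta-\beta^3)$. Since $\tilde f_\beta$ is odd, $\tilde f_\beta'''$ is even and $\tilde f_\beta''(0)=\tilde f_\beta''(\pi)=0$, whence
\[
\int_{-\pi}^{\pi}\bigl|\tilde f_\beta'''\bigr|_+\,dx = 2\int_0^{\pi}\bigl|\tilde f_\beta'''\bigr|_+\,dx = \int_0^{\pi}\bigl|\tilde f_\beta'''(x)\bigr|\,dx = \mathrm{TV}_{[0,\pi]}\bigl(\tilde f_\beta''\bigr),
\]
the middle identity because $\int_0^\pi \tilde f_\beta''' = \tilde f_\beta''(\pi)-\tilde f_\beta''(0)=0$. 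As $P_\beta$ is a quartic, $\tilde f_\beta'''$ has at most four zeros in $(0,\pi)$ — the solutions of $P_\beta(\cos\theta)=0$ — so $\mathrm{TV}_{[0,\pi]}(\tilde f_\beta'')$ is a telescoping sum $\sum_j|\tilde f_\beta''(\theta_{j+1})-\tilde f_\beta''(\theta_j)|$ over at most five consecutive subintervals, each term an elementary function of $\beta$ and of the algebraic roots of $P_\beta$ in $[-1,1]$; e.g. $P_\beta(1)=-(3\beta+1)$ and $P_\beta(-1)=1-3\beta$ already fix the signs of $\tilde f_\beta'''$ at the endpoints and force at least one interior sign change whenever $|\beta|<1/3$.

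It then remains to verify the one-variable inequality $4(1+\tfrac{a(\beta)}{\pi})>a(\beta)\,\mathrm{TV}_{[0,\pi]}(\tilde f_\beta'')$ for $\beta\ge -0.16$. I would split the range. On a compact interval $[-0.16,B]$: determine how many roots of $P_\beta$ fall in $[-1,1]$, enclose them, evaluate $\mathrm{TV}_{[0,\pi]}(\tilde f_\beta'')$ and $a(\beta)$, and close the inequality by explicit elementary estimates (interval enclosures together with monotonicity of the finitely many pieces). For the tail $\beta>B$: rescale near the origin by $x=u/\sqrt\beta$, under which $\tilde f_\beta(x)\approx\beta^{-1/2}\psi(u)$ with $\psi(u)=ue^{-u^2/2}$ while the contribution from $|x|$ bounded away from $0$ is $O(e^{-c\beta})$; this gives $a(\beta)\sim\beta^{-1/2}$ and $\int_{-\pi}^{\pi}|\tilde f_\beta'''|_+\,dx\sim\sqrt\beta\int_{\mathbb{R}}|\psi'''|_+\,du$, so the product converges to $\int_{\mathbb{R}}|\psi'''|_+\,du\approx 3.5<4$ and the ratio to a constant $\approx 1.14>1$; quantifying the error in this expansion makes the tail rigorous. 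I expect the total-variation estimate to be the main obstacle: a crude bound like $\mathrm{TV}_{[0,\pi]}(\tilde f_\beta'')\le 5\max|\tilde f_\beta''|$ is far too lossy to survive down to $\beta=-0.16$ (which is presumably why one rigorously reaches only $-0.16$ where the numerics suggest $-0.25$), so one must genuinely track the count and locations of the roots of $P_\beta$ in $[-1,1]$ and retain the exact signed increments of $\tilde f_\beta''$, and then separately control the tight large-$\beta$ limit.
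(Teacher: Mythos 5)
Your route is essentially the paper's: normalize to $f(x)=\sin(x)e^{\beta(\cos x-1)}$, take $\tau=a(\beta)=\arccos\bigl(\tfrac{\sqrt{1+4\beta^2}-1}{2\beta}\bigr)$, reduce $\int_{-\pi}^{\pi}|f'''|_+\,dx$ to signed increments of $f''$ between consecutive zeros of the quartic $P_\beta(\cos x)$ (your total-variation identity is exactly how the paper evaluates the integral once Lemmas~\ref{lemma:positiveregion}, \ref{lemma:positiveregion2} and \ref{lemma:positiveregion3} pin down the sign pattern of $f'''$), then verify a one-variable inequality in $\beta$ by splitting the range, with a separate large-$\beta$ analysis; your limiting constant $\int_{\mathbb{R}}|\psi'''|_+\,du\approx 3.5$ is precisely what the paper's rescaling $z=\beta(1-\cos x)$, the $g^1,g^2$ envelopes, and the uniform bound $\sqrt{\beta}\,\tau<1$ of \Cref{corollary:tau2} make rigorous. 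So there is no methodological divergence; the issues are in what your plan leaves out.

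Three concrete gaps. First, you invoke the $M=\pi$ form of \Cref{cor:main} for the entire range $\beta\ge-0.16$, but that requires proving that every stable, stationary, non-synchronized configuration satisfies $\tau_{\text{max}}(\mathbf{x})<\pi$; you never establish this semicircle property. The paper proves it only for $\beta<0$ (\Cref{lemma:semicircle}), and for $\beta\ge0$ it avoids the issue altogether by showing the stronger bound $\tau\int|f'''|_+<4f'(0)$, so that \Cref{thm:ascentnormal} (effectively $M=2\pi$) suffices. Second, \Cref{cor:main} only says that stable stationary points are synchronized; to reach \emph{global synchronization} you must combine it with the gradient-ascent structure, {\L}ojasiewicz convergence, and the center-stable manifold theorem, and with the argument in the proof of \Cref{thm:ascentnormal} that converts a negative semidefinite Hessian into stationarity \pref{eq:condition1} and cut-stability \pref{eq:condition2}; your proposal never makes this bridge. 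Third, the quantitative heart of the corollary --- the case-by-case verification occupying \Cref{lemma:beta1} through \Cref{lemma:beta_negative} --- is deferred to unspecified interval enclosures and an asymptotic expansion ``with quantified error''; as written this is a plan rather than a proof, and in particular the endpoint $\beta=-0.16$ is reached in the paper only through the specific monotonicity argument of \Cref{lemma:beta_negative}, not through a generic enclosure, while the large-$\beta$ tail needs explicit, $\beta$-uniform envelopes (the paper's $g^1,g^2$ decomposition together with \Cref{lemma:tau1}) rather than an unquantified limit.
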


The dynamics~\eqref{eq:selfattention} is a simplification of the actual self-attention dynamics, which is given by~\cite[(SA)]{mathpersp25}:
\begin{equation}\tag{T2}
\label{eq:selfattentionnormal}
\dot{x}_i(t) = -\frac{1}{\sum_{j=1}^ne^{\beta\cos(x_i(t)-x_j(t))}}\sum_{j=1}^n e^{\beta\cos(x_i(t)-x_j(t))}\sin(x_i(t)-x_j(t)), \, 1\leq i\leq n.
\end{equation}

As already explained in the previous section (Theorem~\ref{thm:ascentnormal}), the results about unnormalized system can be easily transported to results about the normalized system, which allows us to conclude with the following:
\begin{corollary}
\label{cor:conjecturenormal}
Suppose $\beta\geq -0.16$. Then, global synchronization occurs in \pref{eq:selfattentionnormal}.
\end{corollary}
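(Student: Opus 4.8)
The plan is to recognize the normalized self-attention dynamics \eqref{eq:selfattentionnormal} as a particular instance of the general normalized system \eqref{eq:systemnormalized} and then to quote Theorem~\ref{thm:ascentnormal}; under this reduction the entire content of the corollary collapses to the estimate on $f_\beta$ that is already needed for the unnormalized Corollary~\ref{cor:conjecture}.

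Concretely, I would take $f = f_\beta$, write $f_\beta(x) = \sin(x)\,h(\cos(x))$ with $h(t) = e^{\beta t}$, and set $g_i(x_1,\dots,x_n) = \sum_{j=1}^n e^{\beta\cos(x_i - x_j)}$, so that \eqref{eq:selfattentionnormal} is exactly \eqref{eq:systemnormalized} for this choice of $(f,g)$. Theorem~\ref{thm:ascentnormal} then applies once its hypotheses are checked. The structural hypotheses are immediate: $t\mapsto e^{\beta t}$ is entire, hence real-analytic on an open set containing $[-1,1]$; and $g:\torus^n \to \mathbb{R}_{>0}^n$ is smooth, being a finite sum of smooth functions, and strictly positive, since each $g_i \ge n e^{-|\beta|} > 0$. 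The only substantive hypothesis is the cubic-derivative inequality $\tau \int_{-\pi}^\pi |f_\beta'''(x)|_+\,dx \le 4\bigl(1 + \tfrac{\tau}{2\pi}\bigr) f_\beta'(0)$ with $\tau$ a value for which $f_\beta' < 0$ off $[-\tau,\tau]$; here $f_\beta'(0) = e^\beta$ and one takes $\tau = a(\beta) \in (0,\pi]$ (for $\beta \ge 0$ this is $\arccos c_+$ where $\beta c^2 + c - \beta = 0$ has the root $c_+ \in [0,1)$; for $-0.16 \le \beta < 0$ the relevant root instead lies in $(-1,0)$, so $a(\beta)\in(\pi/2,\pi)$). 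But this is precisely the inequality verified in the course of proving Corollary~\ref{cor:conjecture}, so I would simply invoke that verification.

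If the range $\beta \ge -0.16$ in Corollary~\ref{cor:conjecture} is obtained through the sharpened constant of Corollary~\ref{cor:main} (with $\tau/\pi$ in place of $\tau/2\pi$) rather than through Theorem~\ref{thm:ascentnormal} as literally stated, then instead of quoting Theorem~\ref{thm:ascentnormal} as a black box I would rerun its proof with that sharpened input: the improvement in Corollary~\ref{cor:main} lives at the level of the instability analysis of Theorem~\ref{thm:main} and the gradient-ascent reduction via \eqref{eq:energy} (here $\phi(t) = (e^{\beta t}-1)/\beta$, and $\phi(t)=t$ for $\beta=0$), both of which are insensitive to replacing the flat metric on $(\mathbb{S}^1)^{\otimes n}$ by the weighted metric that converts $\nabla_\cM \mathsf{E}$ into the normalized field. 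With that metric change, {\L}ojasiewicz's theorem and Lemma~\ref{lemma:converge} still apply verbatim, which combined with the instability dichotomy yields global synchronization for \eqref{eq:selfattentionnormal}.

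I do not expect any real obstacle specific to this statement: all the hard work — the stability dichotomy of Theorem~\ref{thm:main}, the bound on $\int |f_\beta'''|_+$ for $\beta \ge -0.16$, and the flat-to-weighted gradient reduction behind Theorem~\ref{thm:ascentnormal} — is already in place. The only care needed is bookkeeping: confirming that the specific normalization $g_i = \sum_j e^{\beta\cos(x_i-x_j)}$ falls in the admissible class of smooth, strictly positive factors allowed in \eqref{eq:systemnormalized} (it does), and that the value of $\tau$ plugged into the criterion is the same $a(\beta)$ for which the cubic bound was checked.
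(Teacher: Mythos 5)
Your proposal is correct and follows essentially the same route as the paper: the paper's proof of Corollary~\ref{cor:conjecturenormal} simply reruns the argument of Corollary~\ref{cor:conjecture} with $g_i(x_1,\ldots,x_n)=\sum_{j=1}^n e^{\beta\cos(x_i-x_j)}$, and for $-0.16\le\beta<0$ it indeed uses the sharpened constant of Corollary~\ref{cor:main} with $M=\pi$ inside the gradient-ascent reduction, exactly as you anticipate. The only detail you leave implicit is that taking $M=\pi$ requires verifying $\tau_{\text{max}}(\mathbf{x})<\pi$ for stable non-synchronized stationary points, which the paper supplies via Lemma~\ref{lemma:semicircle}.
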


Finally, one might wonder whether global synchronization occurs for even more negative values of $\beta$. We show that this is not the case, thus leaving only the region $\beta \in \left(-\frac{2}{3},-0.25\right)$ in uncertain synchronization status.

\begin{corollary}
\label{cor:negativebeta}
Suppose $\beta<-\frac{2}{3}$. There exists a constant $C_\beta$ such that if $n$ is divisible by $3$ or $n\geq C_\beta$, then global synchronization does not occur in either~\eqref{eq:selfattention} or~\eqref{eq:selfattentionnormal}.
\end{corollary}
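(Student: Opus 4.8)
The plan is to exhibit, for every $\beta < -2/3$, a stationary configuration that is \emph{locally stable} (i.e.\ its Hessian is negative semidefinite with the only zero directions being the trivial rotational symmetry), so that Lemma~\ref{lemma:converge} cannot force convergence to the synchronized state; indeed a positive-volume set of initializations will be attracted to this non-synchronized configuration. The natural candidate is the uniformly-spread configuration. When $3\mid n$, take the particles placed in three equal groups of $n/3$ at the vertices of an equilateral triangle, i.e.\ at angles $\{0, 2\pi/3, 4\pi/3\}$; this is clearly stationary by symmetry of $f_\beta$ (each pairwise force either vanishes or cancels in antipodal-like pairs). For general large $n$ one instead uses the regular $n$-gon $x_k = 2\pi k/n$, which is always stationary, and shows it becomes stable once $n \ge C_\beta$.

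First I would set up the Hessian (equivalently, the linearization of \eqref{eq:selfattention}) at the candidate configuration. Since $f_\beta(x) = \sin(x)h(\cos x)$ with $h(t) = e^{\beta t}$, the dynamics is gradient ascent on $E(\mathbf x) = \sum_{i,j}\phi(\cos(x_i - x_j))$ from \eqref{eq:energy}, so the Hessian at a configuration with all particles among the three points $\theta_1,\theta_2,\theta_3$ is block-structured and its spectrum is governed by the numbers $f_\beta'(\theta_a - \theta_b)$ for $a \ne b$ and the diagonal term $\sum_b (n/3) f_\beta'(\theta_a-\theta_b)$. For the equilateral triangle, $\theta_a - \theta_b \in \{\pm 2\pi/3\}$, and a direct computation gives $f_\beta'(\pm 2\pi/3) = e^{-\beta/2}\bigl(-\tfrac12 - \tfrac34\beta\bigr)$. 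The key sign condition is $f_\beta'(2\pi/3) > 0$, which is exactly $-\tfrac12 - \tfrac34\beta > 0$, i.e.\ $\beta < -2/3$. When $f_\beta'(2\pi/3)>0$, the interaction between distinct clusters is \emph{repulsive}, so each of the three clusters is locally stable (particles within a cluster attract, since $f_\beta'(0) = e^\beta > 0$ and small displacements stay near $0$ where $f_\beta' > 0$) and the three-cluster arrangement sits at a local maximum of $E$ modulo rotation. I would verify that all non-rotational Hessian eigenvalues are strictly negative by diagonalizing the $3\times 3$ reduced quadratic form on cluster-center perturbations together with the within-cluster block, both of which are controlled by the two positive quantities $f_\beta'(0)$ and $f_\beta'(2\pi/3)$.

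For the case $n \ge C_\beta$ with $n$ not divisible by $3$, the argument is the same in spirit but applied to the regular $n$-gon: the linearization is a circulant matrix whose eigenvalues are $\lambda_m = \sum_{k=1}^{n-1} f_\beta'(2\pi k/n)\bigl(1 - \cos(2\pi k m/n)\bigr)$ for $m = 1,\dots,n-1$ (up to normalization), and as $n\to\infty$ these Riemann-sum-approximate the Fourier coefficients of $f_\beta'$ on $\torus$. Stability of the $n$-gon reduces to showing these sums are all negative, which follows once the relevant Fourier coefficients of $f_\beta'$ are negative and $n$ is large enough for the Riemann-sum error to be absorbed; the hypothesis $\beta < -2/3$ is what makes the dominant ($m$ near $n/3$) Fourier modes have the right sign. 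The main obstacle is precisely this spectral bookkeeping: one must check \emph{all} eigenvalues, not just one, so I expect the bulk of the work is establishing $\lambda_m < 0$ uniformly in $m$, either by an exact evaluation exploiting that $f_\beta'$ has an explicit $2\pi$-periodic Fourier expansion in terms of modified Bessel functions $I_k(\beta)$, or by splitting into low modes (handled by explicit sign analysis) and high modes (handled by a crude bound plus the Riemann-sum estimate). Once stability is established, non-synchronization is immediate: by Lemma~\ref{lemma:converge} the set of initial conditions converging to a configuration with a positive Hessian eigenvalue has measure zero, but our stable non-synchronized configuration has a neighborhood of positive volume all of whose trajectories converge to it (by the stable manifold theorem applied at a genuinely stable fixed point), so global synchronization fails; the same conclusion transfers to \eqref{eq:selfattentionnormal} via the metric-change argument behind Theorem~\ref{thm:ascentnormal}.
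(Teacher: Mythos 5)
Your treatment of the case $3\mid n$ is essentially the paper's argument (three equal clusters at $\{0,2\pi/3,4\pi/3\}$, stability from $f_\beta'(0)>0$ and $f_\beta'(2\pi/3)>0$, the latter being exactly $\beta<-2/3$), and that half is fine. The genuine gap is in your branch for $n$ not divisible by $3$: the regular $n$-gon is \emph{not} stable for large $n$ --- for any $\beta$ --- so no amount of spectral bookkeeping can make all your $\lambda_m$ have the stable sign. Concretely, testing the Hessian quadratic form $v\mapsto -\tfrac12\sum_{i,j}f_\beta'(x_i-x_j)(v_i-v_j)^2$ at $x_k=2\pi k/n$ with the dipole modes $v_k=\cos(2\pi k/n)$ and $v_k=\sin(2\pi k/n)$ and summing the two gives
\begin{equation*}
\sum_{i,j}f_\beta'(x_i-x_j)\bigl[(\cos x_i-\cos x_j)^2+(\sin x_i-\sin x_j)^2\bigr]
=2n\sum_{k=1}^{n-1}f_\beta'\!\left(\tfrac{2\pi k}{n}\right)\left(1-\cos\tfrac{2\pi k}{n}\right),
\end{equation*}
and the Riemann sum converges (after integrating by parts, using $\int_{-\pi}^\pi f_\beta'=0$) to $-\frac{n}{2\pi}\int_{-\pi}^{\pi}\sin^2(x)e^{\beta\cos x}\,dx<0$. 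Hence the $m=1$ mode is always an escape direction: the $n$-gon is locally unstable for all large $n$ (for $\beta=0$ one computes the value $-n/2$ exactly), so it cannot serve as the non-synchronizing attractor, and your heuristic that the ``dominant $m\approx n/3$ modes'' are the dangerous ones misses this. The paper instead keeps the three-cluster ansatz for all $n$: clusters of sizes $\lfloor n/3\rfloor,\lfloor n/3\rfloor,n-2\lfloor n/3\rfloor$ placed at $0,\alpha,2\pi-\alpha$, with $\alpha$ chosen to solve the one-dimensional stationarity equation, so that $\alpha=\tfrac{2\pi}{3}+o_n(1)$; then every pairwise difference lies near $\{0,\pm 2\pi/3\}$ where $f_\beta'>0$ (strictly, since $\beta<-2/3$), and the same Laplacian argument as in the triangle case gives negative semidefiniteness with kernel spanned by $(1,\dots,1)$, for all $n$ large (this is the paper's Lemma~\ref{lemma:beta_nonsync}).

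A secondary, fixable imprecision: your final step invokes ``the stable manifold theorem at a genuinely stable fixed point,'' but the fixed point is not hyperbolic --- the rotational symmetry forces a zero eigenvalue --- so asymptotic stability does not follow from the linearization alone. The clean route (implicitly the paper's) is to use the gradient structure: the configuration is a strict local maximum of $E$ modulo the rotation orbit, so a neighborhood of that orbit is invariant and, by {\L}ojasiewicz, every trajectory started there converges to a critical point in the neighborhood, which is non-synchronized; this yields a positive-volume set of non-synchronizing initial data, and the transfer to \eqref{eq:selfattentionnormal} via the metric change of Theorem~\ref{thm:ascentnormal} is as you say.
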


The proofs of all results can be found in \Cref{sec:application}.

We remark that for $\beta > 0$ self-attention dynamics (normalized or not) corresponds to gradient
ascent on the potential
\begin{equation}\label{eq:energy_sad}
	E(\mathbf{x}) = {1\over \beta} \sum_{i,j} e^{\beta \cos (x_i-x_j)}\,.
\end{equation}
For $\beta < 0$, self-attention dynamics is a \textit{gradient descent} on the potential
$$ E(\mathbf{x}) = {1\over |\beta|} \sum_{i,j} e^{-|\beta| \cos (x_i-x_j)}\,.$$
In either case, the global optimizer is clearly the synchronized configuration. However, in the
latter case local extrema with non-zero volume basin of attraction may emerge for large $|\beta|$.

Finally, we remark that \textit{gradient descent} on the potential~\eqref{eq:energy_sad} with
$\beta>0$ yields a completely different dynamics, corresponding to taking
$f(x)=-\sin(x)e^{\beta\cos(x)}$ in~\eqref{eq:generalsystem2}. In this case
the particles tend to equi-disperse on the circle. Indeed, the unique global minimizer
of~\eqref{eq:energy_sad} is the $n$-gon as shown in \cite{cohn2007universally}. At the level of the evolution of measures, the
unique global minimizer of the functions $\mu \mapsto \int \int e^{\beta \cos(x-y)} \mu(dx)\mu(dy)$ can be
easily seen to be the uniform measure (e.g. by noticing that Fourier coefficients of $e^{\beta
\cos(x)}$ are all positive, cf.~\cite[Section 7.2]{mathpersp25}), thus explaining the equidistribution
tendency.

\section{Proof of \texorpdfstring{\Cref{thm:main}}{}}
\label{sec:mainproof}

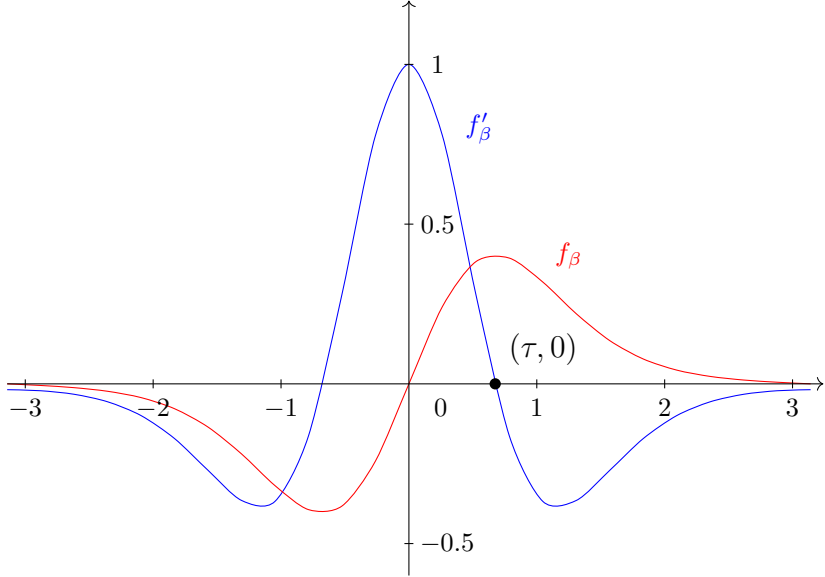
\begin{figure}
\begin{center}
\begin{tikzpicture}[scale=0.85]

\draw[xscale=2, yscale = 5, domain=-pi:pi, smooth, variable=\x, blue] plot ({\x}, {(cos(\x*180/pi) - 2*sin(\x*180/pi)*sin(\x*180/pi))*exp(2*cos(\x*180/pi)-2)});

\draw[xscale=2, yscale = 5, domain=-pi:pi, smooth, variable=\x, red] plot ({\x}, {sin(\x*180/pi)*exp(2*cos(\x*180/pi)-2)});

\draw[->] (-2*pi,0) -- (2*pi+0.2,0);

\draw[->] (0,-3) -- (0,6);

\foreach \x in {-3,-2,-1,1,2,3}
\draw[shift={(2*\x,0)},color=black] (0pt,2pt) -- (0pt,-2pt) node[below] {\footnotesize $\x$};

\node[below] at (0.5,-2pt) {\footnotesize $0$};

\node[color = blue] at (1.1, 4) {\footnotesize $f'_\beta$};

\node[color = red] at (2.5, 2) {\footnotesize $f_\beta$};

\fill (4*0.33744, 0) circle[radius=2.5pt];
\node at (4*0.33744+0.75, 0.55) {$(\tau,0)$};

\foreach \y in {-0.5,0.5, 1}
\draw[shift={(0,5*\y)},color=black] (2pt,0pt) -- (-2pt,0pt);

\foreach \y in {-0.5} \node at (0.6, 5*\y) {\footnotesize $\y$};

\foreach \y in {0.5,1} \node at (0.45, 5*\y) {\footnotesize $\y$};

\end{tikzpicture}
\end{center}
\caption{The function $f_\beta$ (red) and its derivative $f'_\beta$ (blue) for $\beta=2$. The parameter $\tau=\tau(\beta)$ is defined as the unique solution to the equation $f'_\beta(\tau)=0$ over $[0, \pi]$. For $\beta=2$, $\tau\simeq 0.6749$.
}
\label{fig:ampot}
\end{figure}

Suppose ${\bf x} = (x_i)_{1\leq i\leq n} \in \torus^n$. For the system~\eqref{eq:generalsystem2}, a point is \textit{stationary} iff
\begin{equation}\tag{C1}
\label{eq:condition1}
\sum_{j=1}^n f(x_i-x_j)=0, \, \forall i\in[n].
\end{equation}
In view of Lemma~\ref{lemma:converge}, we also need a condition for the point $\bf x$ to be stable.
In fact, we  only need a weaker condition that is implied by stability, which has been the basis of
proving synchronization in Kuramoto-type dynamics since its introduction
in~\cite[(2.4)]{Taylor2012}. We say that point $\bf x$ is \textit{cut-stable} if
\begin{equation}\tag{C2}
\label{eq:condition2}
\sum_{i\in S,\,j\in S^C} f'(x_i - x_j) \geq 0
\end{equation} 
for all $S\subset [n]$ such that the value of $x_i$ is the same for all $i\in S$.

Our proof will show that any stationary point satisfying \eqref{eq:condition2} must be synchronized. In other words, we will show that for any non-synchronized stationary point $\mathbf{x}$ there exists a set $S$ defining an ``escape direction'' for the linearization of the system~\eqref{eq:generalsystem2}, corresponding to moving points in $S$ clockwise and points in $S^c$ counter-clockwise at the same speed.\footnote{Note that when state space is $\mathbb{S}^d$ with $d>1$, then finding escape directions in mean-field systems can be done by pulling all particles toward the same direction (subject to spherical constraints), cf.~\cite{markdahl} and~\cite{synchronization_nonlinear}. Our method is  more involved.}

We start with a review of the proof from~\cite{mathpersp25} which applies to $f(x)=f_\beta(x) = e^{\beta\cos(x)}\sin(x)$, whose derivative $f_\beta'$ when $\beta=2$ is shown in Fig.~\ref{fig:ampot}, where the crucial parameter $\tau$ from Theorem~\ref{thm:main} is also shown. If we apply~\eqref{eq:condition2} with $S=\{1\}$ then we see that there must be at least one particle, say $2$, at distance $\le \tau$ from 1 (otherwise all $f'(x_1-x_j)<0$). We can now apply the argument to $S=\{1,2\}$ to find that $3$ must be at distance $\le \tau$, etc. Overall, if $n\tau < \pi$ then all particles must be inside one half-circle. But then if $i_0$ is the boundary particle, then~\eqref{eq:condition1} with $i=i_0$ implies that all $x_j=x_{i_0}$, because $f(x_{i_0}-x_j)\ge 0$. Unfortunately, this proof only shows synchronization when $\pi > n\tau \asymp {n\over \sqrt{\beta}}$. Our contribution here is a method that extends to arbitrary large $n$.

To describe our idea, let us define the vector field acting on particles as
$$ \chi(x) = \sum_{j} f(x-x_j)\,.$$
Then from~\eqref{eq:condition1} and~\eqref{eq:condition2} (applied with $S=\{i\}$) we know that 
$$ \chi(x_i) = 0, \qquad \chi'(x_i) \ge f'(0)\,.$$
Consider a pair of adjacent particles $x_{i} < x_{i+1}$. Because $\chi(x_i)=\chi(x_{i+1})$, we have from integration by parts that
$$ \chi'(x_i) + \chi'(x_{i+1}) = \int_{x_i}^{x_{i+1}} \chi{'''}(x) {(x-x_i)(x_{i+1}-x)\over x_{i+1}-x_i}dx\,.$$
As we have shown above, all intervals $x_{i+1} - x_i$ except possibly 1 are bounded by $\tau$. In the special case when \textit{all of them} are bounded by $\tau$ we can notice that the factor multiplying $\chi'''(x)$ is positive and upper-bounded by $\tau/2$. Thus summing over $i=1,\ldots,n-1$ we obtain
\begin{align*}
2 n f'(0) \le 2 \sum_i \chi'(x_i) & =  \sum_{i=1}^{n-1} \int_{x_i}^{x_{i+1}} \chi{'''}(x) {(x-x_i)(x_{i+1}-x)\over (x_{i+1}-x_i)} dx \\ & \le {\tau\over 2} \int |\chi'''(x)|_+ dx \le {n\tau\over 2} \int |f'''|_+ dx\,.
\end{align*}
This inequality, however, is not possible if (as is the case for $f=f_\beta$ for large $\beta$) we have $\tau \int|f'''|_+ < 4f'(0)$. Consequently, one of the assumptions must be violated.  The full proof below will show that in fact this contradiction implies that $x_1=\ldots=x_n$.

We proceed to the formal proof of Theorem~\ref{thm:main} and consider ${\bf x}=(x_1,\ldots,x_n)$ satisfying conditions~\eqref{eq:condition1}  and~\eqref{eq:condition2}. Denote the distinct values of the $x_i$ as $0\leq \theta_1 < \theta_2 < \cdots < \theta_K < 2\pi$ and let $\theta_{K+1} = 2\pi + \theta_1$. For $1\leq j\leq K$, we define $\Delta_j = \theta_{j+1} -\theta_j$, where we set $\Delta_K = 2\pi + \theta_1 - \theta_K$ to account for the periodic boundary. We refer to the $\Delta_j$ as the \textit{gaps}. Furthermore, we let $\tau_{\text{max}}(\bf x)$ denote the maximum of $\Delta_j$ for $1\leq j\leq K$.

\begin{lemma}
\label{lemma:twoconsecutive_general}
Assume that \pref{eq:condition2} is satisfied at $(x_1, \ldots, x_n)$. There do not exist distinct gaps $\omega_1$ and $\omega_2$ such that $\omega_1, \omega_2 >\tau$.
\end{lemma}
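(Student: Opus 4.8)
The plan is to argue by contradiction, assuming there exist two distinct gaps $\omega_1, \omega_2 > \tau$. The key tool is the cut-stability condition \eqref{eq:condition2}, which we will apply to carefully chosen sets $S$. First I would recall the basic consequence of \eqref{eq:condition2} with $S = \{i\}$: it says $\chi'(x_i) \ge f'(0) > 0$, where $\chi(x) = \sum_j f(x - x_j)$; more usefully, applying \eqref{eq:condition2} to an arbitrary set $S$ on which all $x_i$ are equal yields $\sum_{i \in S, j \in S^c} f'(x_i - x_j) \ge 0$. The idea is that a ``long'' gap $\omega > \tau$ means the two arcs on either side of that gap are somewhat decoupled: if $S$ is the set of all particles lying in some arc and $S^c$ its complement, then the cross-terms $f'(x_i - x_j)$ with $x_i \in S$, $x_j \in S^c$ involve differences that (when both gaps bracket $S$) all lie outside $[-\tau, \tau]$, forcing each such $f'$ to be negative, contradicting the sum being $\ge 0$.

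More concretely: suppose $\omega_1$ is the gap between $\theta_a$ and $\theta_{a+1}$, and $\omega_2$ the gap between $\theta_b$ and $\theta_{b+1}$, with these two gaps distinct. These two gaps cut the circle into two closed arcs; let $S$ be the set of particles on one of these arcs (the shorter one, in terms of the number of distinct $\theta$-values it contains, or whichever is convenient). Then $S$ and $S^c$ are each contained in an arc, and the complement of $S$ together with the two long gaps ``surrounds'' $S$. The crucial geometric claim I would need is that for every $x_i \in S$ and every $x_j \in S^c$, the circular difference $x_i - x_j$ lies strictly outside $[-\tau, \tau]$ — equivalently, the angular distance between any point of $S$ and any point of $S^c$ exceeds $\tau$. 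This follows because to get from a point of $S$ to a point of $S^c$ one must traverse at least one of the two long gaps $\omega_1, \omega_2$, each of which exceeds $\tau$, so the distance in that direction is $> \tau$; and one has to check the distance the other way around the circle is also $> \tau$, which holds because the total circumference is $2\pi$ and \ldots this is where I would need to be careful about whether $\tau \le \pi$ forces both ``ways around'' to be large. Since $\tau \le \pi$, if the short way around (through the interior of $S$ or $S^c$ plus possibly part of a gap) were $\le \tau \le \pi$, then the long way would be $\ge 2\pi - \tau \ge \pi \ge \tau$; combined with the fact that the path through a long gap is $> \tau$, we get that in \emph{whichever} direction realizes the circular distance between $x_i$ and $x_j$, that distance is $> \tau$. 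Hence $f'(x_i - x_j) < 0$ for all such pairs, so $\sum_{i\in S, j \in S^c} f'(x_i - x_j) < 0$, contradicting \eqref{eq:condition2} — provided $S \ne \emptyset$ and $S^c \ne \emptyset$, which holds since there are at least two distinct $\theta$ values (as there are at least two gaps).

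The main obstacle I anticipate is the geometric bookkeeping in the previous paragraph: carefully verifying that \emph{every} cross-pair distance genuinely exceeds $\tau$, handling the circular (mod $2\pi$) arithmetic correctly, and making sure the separation into $S$ and $S^c$ via the two long gaps is set up so that no cross-pair can be ``close.'' One subtlety is that the distance between a point near the $S$-side of gap $\omega_1$ and a point near the $S^c$-side of gap $\omega_2$ need not just be $\omega_1$ or $\omega_2$ — it is at least the full gap plus whatever arc lies between, so it is $\ge \max(\omega_1, \omega_2) > \tau$ going one way; going the other way around requires passing the \emph{other} long gap, so that direction is also $> \tau$. Thus the minimum of the two, which is the circular distance, is $> \tau$. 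Once this separation lemma is nailed down, the contradiction with \eqref{eq:condition2} is immediate, since $f'(x) < 0$ for all $x \notin [-\tau, \tau]$ by hypothesis and the sum over a nonempty product set of strictly negative numbers is strictly negative. I would present the argument choosing $S$ to be, say, the particles strictly between $\theta_{a+1}$ and $\theta_b$ inclusive (going the short way), and double-check the two degenerate cases where one of these arcs contains no particles — but since $\omega_1 \ne \omega_2$ are distinct gaps, each arc between them contains at least one endpoint $\theta$, hence at least one particle, so both $S$ and $S^c$ are nonempty.
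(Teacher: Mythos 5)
Your proposal is correct and takes essentially the same route as the paper: the paper chooses $S$ to be the particles on one side of a line passing through the interiors of both long gaps (equivalently, one of the two arcs your two gaps cut out), notes that any cross-pair difference must traverse one of the gaps in either direction around the circle, hence lies outside $[-\tau,\tau]$ modulo $2\pi$, so every $f'(x_i-x_j)<0$ and the cut-stability condition \eqref{eq:condition2} is violated. Your added nonemptiness check for $S$ and $S^c$ is fine and implicit in the paper's argument.
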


\begin{proof}
The idea is the same as \cite[Appendix B]{mathpersp25}. Suppose that the line $\ell$ intersects the interiors of both $\omega_1$ and $\omega_2$. Let $S$ be the set of $i$ such that $x_i$ is on one side of $\ell$. Then, for all $i\in S$ and $j\in S^C$, we have that $x_i-x_j \notin (-\min(\omega_1, \omega_2), \min(\omega_1, \omega_2))  \pmod{2\pi}$. Because $\min(\omega_1,\omega_2)>\tau$, $x_i-x_j\notin [-\tau, \tau] \pmod{2\pi}$, which implies that $f'(x_i-x_j) < 0$. This is a contradiction to \pref{eq:condition2}.
\end{proof}

For $i\in [K]$, let $N_i$ be the multiplicity of $\theta_i$, i.e. the number of $j\in [n]$ such that $x_j=\theta_i$. Also, let 
$$
\product{f, g}_{L_2} \triangleq \int_{-\pi}^\pi f(x)g(x) dx\,.
$$

\begin{lemma}
\label{lemma:taumax}
Suppose $(x_1, \ldots, x_n)\in\mathbb{T}^n$ is stationary \pref{eq:condition1} and cut-stable \pref{eq:condition2} for the system \eqref{eq:generalsystem2}. Furthermore, assume that $(x_1,\ldots,x_n)$ is not synchronized. Then,
\[
\product{1, |f'''|_+}_{L_2} \geq \min\left(\frac{8}{\tau}, \frac{4}{\tau}+\frac{4}{\tau_{\text{max}}(x_1,\ldots,x_n)}\right)f'(0).
\]
\end{lemma}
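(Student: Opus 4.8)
The plan is to evaluate two ingredients at the distinct particle locations $\theta_1<\dots<\theta_K$: a pointwise lower bound on $\chi':=\big(\sum_k f(\cdot-x_k)\big)'$ at each $\theta_m$, coming from stationarity and cut-stability, and an integration-by-parts identity that turns sums of consecutive such bounds into an integral of $\chi'''$ against a bounded nonnegative kernel. Since the asserted inequality is vacuous when $f'(0)\le 0$, I would assume $f'(0)>0$. Writing $\chi(x)=\sum_{k=1}^n f(x-x_k)$, \eqref{eq:condition1} gives $\chi(\theta_m)=0$ for every $m$; applying \eqref{eq:condition2} to the set $S=\{i:x_i=\theta_m\}$ (which has the ``same value'' property and cardinality $N_m$) gives $N_m\sum_{k:\,x_k\ne\theta_m}f'(\theta_m-x_k)\ge 0$, hence $\chi'(\theta_m)\ge N_m f'(0)$. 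For adjacent values $\theta_j<\theta_{j+1}$, since $\chi(\theta_j)=\chi(\theta_{j+1})=0$, integrating by parts twice yields
\[
\chi'(\theta_j)+\chi'(\theta_{j+1})=\int_{\theta_j}^{\theta_{j+1}}\chi'''(x)\,w_j(x)\,dx,\qquad w_j(x):=\frac{(x-\theta_j)(\theta_{j+1}-x)}{\Delta_j},
\]
with $w_j\ge 0$ on $[\theta_j,\theta_{j+1}]$ and $\sup w_j=\Delta_j/4$.

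Next I would globalize. Since $w_j\ge 0$, the identity gives $\chi'(\theta_j)+\chi'(\theta_{j+1})\le\frac{\Delta_j}{4}\int_{I_j}|\chi'''|_+\,dx$ on $I_j=(\theta_j,\theta_{j+1})$, which combined with $\chi'(\theta_m)\ge N_m f'(0)$ reads $\int_{I_j}|\chi'''|_+\ge \frac{4(N_j+N_{j+1})}{\Delta_j}f'(0)$. The intervals $I_1,\dots,I_K$ tile $\torus$ up to a null set, and the pointwise bound $|\chi'''(x)|_+=\big|\sum_k f'''(x-x_k)\big|_+\le\sum_k|f'''(x-x_k)|_+$ gives $\int_\torus|\chi'''|_+\le n\int_{-\pi}^\pi|f'''|_+$ by translation invariance. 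Summing over $j$ therefore produces
\[
n\int_{-\pi}^\pi|f'''(x)|_+\,dx\ \ge\ 4f'(0)\sum_{j=1}^K\frac{N_j+N_{j+1}}{\Delta_j}\ =\ 4f'(0)\sum_{j=1}^K N_j\!\left(\frac{1}{\Delta_j}+\frac{1}{\Delta_{j-1}}\right),
\]
with cyclic conventions $\Delta_0=\Delta_K$, $N_{K+1}=N_1$.

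To finish I would lower-bound the last sum using Lemma~\ref{lemma:twoconsecutive_general}: at most one gap exceeds $\tau$. If all gaps are $\le\tau$, i.e. $\tau_{\text{max}}\le\tau$, then every parenthesis is $\ge 2/\tau$, so the sum is $\ge(2/\tau)\sum_j N_j=2n/\tau$, giving $\int_{-\pi}^\pi|f'''|_+\ge(8/\tau)f'(0)$. Otherwise a unique gap $\Delta_{j_0}=\tau_{\text{max}}>\tau$ is large, and only the indices $j=j_0$ and $j=j_0+1$ fail the $2/\tau$ bound, each contributing at least $1/\tau+1/\tau_{\text{max}}$; hence
\[
\sum_{j=1}^K N_j\!\left(\frac1{\Delta_j}+\frac1{\Delta_{j-1}}\right)\ \ge\ \frac{2n}{\tau}-\big(N_{j_0}+N_{j_0+1}\big)\!\left(\frac1\tau-\frac1{\tau_{\text{max}}}\right)\ \ge\ n\!\left(\frac1\tau+\frac1{\tau_{\text{max}}}\right),
\]
where the last step uses $N_{j_0}+N_{j_0+1}\le n$ and $\tau<\tau_{\text{max}}$; this gives $\int_{-\pi}^\pi|f'''|_+\ge\big(\tfrac4\tau+\tfrac4{\tau_{\text{max}}}\big)f'(0)$. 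Since $\tau_{\text{max}}\le\tau$ makes $8/\tau$ the smaller of the two quantities and $\tau_{\text{max}}>\tau$ makes $4/\tau+4/\tau_{\text{max}}$ the smaller, the two cases together are exactly the claimed bound.

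The main obstacle is the multiplicity bookkeeping. The inequality $\int_\torus|\chi'''|_+\le n\int_{-\pi}^\pi|f'''|_+$ costs a factor $n$, and it is recovered only because the per-gap estimate scales like $N_j+N_{j+1}$ — which forces one to keep the stronger $\chi'(\theta_m)\ge N_m f'(0)$ rather than just $\ge f'(0)$ — after which $\sum_j(N_j+N_{j+1})/\Delta_j$ rearranges into $\sum_j N_j(1/\Delta_j+1/\Delta_{j-1})$ and telescopes against $\sum_j N_j=n$. One also has to check that the large-gap correction $-(N_{j_0}+N_{j_0+1})(1/\tau-1/\tau_{\text{max}})$ is most damaging exactly when $N_{j_0}+N_{j_0+1}=n$, i.e. in the two-value configuration $K=2$, where the resulting estimate is tight.
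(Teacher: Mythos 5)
Your proposal is correct and follows essentially the same route as the paper: the bound $\chi'(\theta_m)\ge N_m f'(0)$ from \pref{eq:condition1}--\pref{eq:condition2}, integration by parts against the parabolic kernel $(x-\theta_j)(\theta_{j+1}-x)/\Delta_j$ vanishing at consecutive distinct values (the paper phrases this via test functions $\Psi_i$ with $\Psi_i''=-a_i$ and then sets $a_i=1/\Delta_i$), the pointwise bound $|\chi'''|_+\le\sum_k|f'''(\cdot-x_k)|_+$, and Lemma~\ref{lemma:twoconsecutive_general} plus $N_{j_0}+N_{j_0+1}\le n$ for the final case analysis. Your cyclic rearrangement of $\sum_j (N_j+N_{j+1})/\Delta_j$ and the explicit treatment of $f'(0)\le 0$ are only cosmetic differences.
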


\begin{proof}
For the sake of contradiction, assume that $K>1$. Let
\[
\varphi(x) = \sum_{j=1}^n f'(x-x_j).
\]
Then, after using \pref{eq:condition2} with $S$ equal to the set of $j\in [n]$ such that $x_j=\theta_i$,
\begin{equation}
\label{eq:varphival_general}
\varphi(\theta_i) = N_if'(0) + \sum_{j\in [n],\,x_j\not= \theta_i} f'(\theta_i-x_j) \geq N_if'(0).
\end{equation}

Let 
\[
\Psi = \sum_{i=1}^K \mathbf{1}\{x\in (\theta_i, \theta_{i+1})\} \Psi_i,
\]
where $\Psi_i: [\theta_i, \theta_{i+1}] \rightarrow \mathbb{R}$ is twice differentiable. Then, for $i\in [K]$,
\[
\product{\Psi_i, \varphi''}_{L_2} = \Psi_i\varphi'\big|_{\theta_i}^{\theta_{i+1}}- \product{\Psi_i', \varphi'}_{L_2} = (\Psi_i\varphi' - \Psi_i'\varphi)\big|_{\theta_i}^{\theta_{i+1}} + \product{\Psi_i'', \varphi}_{L_2}.
\]

Assume that $\Psi_i''=-a_i$ for a constant $a_i$ and $\Psi_i(\theta_i)=\Psi_i(\theta_{i+1}) = 0$ over $[\theta_i, \theta_{i+1}]$, or equivalently that $\Psi_i(x)=\frac{a_i}{2}(x-\theta_i)(\theta_{i+1}-x)$. Then, we have that
\begin{align*}
\product{\Psi_i, \varphi''}_{L_2} &= -\Psi_i'(\theta_{i+1})\varphi(\theta_{i+1}) + \Psi_i'(\theta_i)\varphi(\theta_i) - a_i\product{\mathbf{1}\{(\theta_i, \theta_{i+1})\}, \varphi}_{L_2} \\ &= \frac{a_i}{2}\Delta_i(\varphi(\theta_i) + \varphi(\theta_{i+1})),
\end{align*}
since 
\[
\int_{\theta_i}^{\theta_{i+1}} \varphi(x) dx = \sum_{j=1}^n f(\theta_{i+1} - x_j) - \sum_{j=1}^n f(\theta_i - x_j) = 0.
\]
Hence, 
\[
\product{\Psi, \varphi''}_{L_2} = \sum_{i=1}^K \frac{a_i}{2}\Delta_i(\varphi(\theta_i) + \varphi(\theta_{i+1})).
\]

In particular, assuming that $a_i\geq 0$ for all $i\in [K]$, using \pref{eq:varphival_general} gives that
\[
\product{\Psi, \varphi''}_{L_2} \geq \sum_{i=1}^K \frac{a_i}{2}\Delta_i(N_i+N_{i+1})f'(0).
\]
Furthermore, since $\Psi_i\in[0,\frac{a_i\Delta_i^2}{8}]$, we have that
\[
\product{\sum_{i=1}^K a_i\Delta_i^2\mathbf{1}\{(\theta_i, \theta_{i+1})\}, |\varphi''|_+}_{L_2}\geq 4\sum_{i=1}^K a_i\Delta_i(N_i+N_{i+1})f'(0).
\]

Since  $\sum_{j=1}^n |f'''(x-x_j)|_+ \geq |\varphi''|_+$, we have that
\[
\sum_{j=1}^n \product{\sum_{i=1}^K a_i\Delta_i^2\mathbf{1}\{(\theta_i, \theta_{i+1})\}, |f'''(x- x_j)|_+}_{L_2}\geq 4\sum_{i=1}^K a_i\Delta_i(N_i+N_{i+1})f'(0),
\]
or equivalently,
\begin{equation}
\label{eq:prodineq_general}
\sum_{j=1}^n \product{\sum_{i=1}^K a_i\Delta_i\mathbf{1}\{(\theta_i, \theta_{i+1})\}, |f'''(x-x_j)|_+}_{L_2}\geq 4\sum_{i=1}^K a_i(N_i+N_{i+1})f'(0).
\end{equation}

By \Cref{lemma:twoconsecutive_general}, we can have that $\tau_{\text{max}}(x_1,\ldots,x_n)$ is greater than $\tau$, but all other gaps must be at most $\tau$. For brevity, we use $\tau_{\text{max}}$ to denote $\tau_{\text{max}}(x_1,\ldots,x_n)$ in the remainder of the proof.

In \pref{eq:prodineq_general}, set $a_i = \frac{1}{\Delta_i}$ to obtain
\begin{align*}
& \sum_{j=1}^n \product{\sum_{i=1}^K \mathbf{1}\{(\theta_i, \theta_{i+1})\}, |f'''(x-x_j)|_+}_{L_2} \geq 4\sum_{i=1}^K \frac{N_i+N_{i+1}}{\Delta_i}f'(0) \\
& \Leftrightarrow n\product{1, |f'''|_+}_{L_2} \geq 4\sum_{i=1}^K \frac{N_i+N_{i+1}}{\Delta_i} f'(0).
\end{align*}
Assume that the gap $\tau_{\text{max}}$ is between $\theta_\ell$ and $\theta_{\ell + 1}$. Since $\Delta_i \leq \tau$ for $i\in [K]\backslash \{\ell\}$, 
\begin{align*}
n\product{1, |f'''|_+}_{L_2} & \geq 4\left(\sum_{i\in[K]\backslash\{\ell\}} \frac{N_i+N_{i+1}}{\tau} + \frac{N_\ell + N_{\ell+1}}{\tau_{\text{max}}}\right)f'(0) \\
& = 4\left(\frac{2n}{\tau} - (N_\ell + N_{\ell+1})\cdot \left(\frac{1}{\tau} - \frac{1}{\tau_{\text{max}}}\right)\right) f'(0).
\end{align*}
The result follows after noting that $N_\ell+N_{\ell+1}\leq n$ because $K\geq 2$.
\end{proof}

\begin{corollary}
\label{cor:main}
Let $\tau\in(0,\pi]$ be such that $f'(x) < 0$ for all $x\not\in[-\tau,\tau]$. Assume that $M$ is a positive real number such that for all stable, stationary, and non-synchronized points $\bf x$ of \pref{eq:generalsystem2}, $\tau_{\text{max}}({\bf x}) < M$. If
\[
\tau\int_{-\pi}^\pi |f'''(x)|_+ dx\leq 4\left(1+\frac{\tau}{M}\right)f'(0),
\]
then every stable stationary point $(x_1, \ldots, x_n)$ of system~\eqref{eq:generalsystem2} on $\mathbb{T}^n$ is synchronized, i.e. $x_1=\cdots=x_n$.
\end{corollary}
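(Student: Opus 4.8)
The plan is to argue by contradiction, reducing everything to Lemma~\ref{lemma:taumax}. So suppose $\mathbf{x}=(x_1,\dots,x_n)$ is a stable stationary point of \eqref{eq:generalsystem2} that is \emph{not} synchronized; I will show that the displayed inequality in the hypothesis must then fail.

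The first step is to check that $\mathbf{x}$ feeds into Lemma~\ref{lemma:taumax}, which needs two things. First, \emph{stability implies} \eqref{eq:condition2}: as recalled in the discussion preceding \eqref{eq:condition2} (and going back to \cite{Taylor2012}), a violation of \eqref{eq:condition2} for some $S$ produces an escape direction for the linearization --- namely the direction $v_S$ moving the particles of $S$ and those of $S^c$ in opposite senses, whose quadratic form against the linearization is a positive multiple of $-\sum_{i\in S,\,j\in S^c} f'(x_i-x_j)$. When $f$ is odd (as in every application, where $f(x)=\sin(x)h(\cos x)$ and the linearization is the Hessian of the energy \eqref{eq:energy}) this gives a positive eigenvalue of a symmetric matrix, contradicting stability; so $\mathbf{x}$ is cut-stable. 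Second, since $\mathbf{x}$ is stable, stationary, and non-synchronized, the defining property of $M$ gives $\tau_{\text{max}}(\mathbf{x}) < M$.

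I would then apply Lemma~\ref{lemma:taumax} to the stationary, cut-stable, non-synchronized point $\mathbf{x}$:
\[
\int_{-\pi}^{\pi} |f'''(x)|_+\, dx \;=\; \product{1,\,|f'''|_+}_{L_2} \;\ge\; \min\!\left(\frac{8}{\tau},\; \frac{4}{\tau}+\frac{4}{\tau_{\text{max}}(\mathbf{x})}\right) f'(0).
\]
It remains to compare the right-hand side with $\bigl(\tfrac{4}{\tau}+\tfrac{4}{M}\bigr)f'(0)=\tfrac{4}{\tau}\bigl(1+\tfrac{\tau}{M}\bigr)f'(0)$, which is the quantity in the hypothesis after dividing through by $\tau$. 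Since $\tau_{\text{max}}(\mathbf{x})<M$ we have $\tfrac{4}{\tau}+\tfrac{4}{\tau_{\text{max}}(\mathbf{x})}>\tfrac{4}{\tau}+\tfrac{4}{M}$, and since $M>\tau$ (as holds in every application, where $M\in\{\pi,2\pi\}$ and $\tau<\pi$) we also have $\tfrac{8}{\tau}=\tfrac{4}{\tau}+\tfrac{4}{\tau}>\tfrac{4}{\tau}+\tfrac{4}{M}$. Hence the minimum strictly exceeds $\tfrac{4}{\tau}+\tfrac{4}{M}$, so $\tau\int_{-\pi}^{\pi}|f'''(x)|_+\,dx>4\bigl(1+\tfrac{\tau}{M}\bigr)f'(0)$, which contradicts the hypothesis. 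This rules out any non-synchronized stable stationary point, proving the claim.

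No step here is genuinely hard; the real work is already inside Lemma~\ref{lemma:taumax} (and Lemma~\ref{lemma:twoconsecutive_general}), and the corollary is bookkeeping on top of it. The two places that need a little care are: making the implication ``stable $\Rightarrow$ \eqref{eq:condition2}'' precise, which is where symmetry of the linearization enters (and which is why the definition of cut-stability is restricted to the subsets $S$ that actually occur); and dispatching the branch $\tau_{\text{max}}(\mathbf{x})\le\tau$ of the $\min$, which is the reason one wants the auxiliary parameter to satisfy $M>\tau$.
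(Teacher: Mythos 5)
Your proof is correct and takes essentially the same route as the paper: the paper's own proof of \Cref{cor:main} is exactly the reduction to \Cref{lemma:taumax} followed by the observation that $\tau_{\text{max}}(\mathbf{x})<M$ makes the lower bound exceed $4\left(\frac{1}{\tau}+\frac{1}{M}\right)f'(0)$, contradicting the hypothesis. Your extra remarks---that stability yields cut-stability \pref{eq:condition2} through the symmetric (gradient/Hessian) structure, and that disposing of the $\frac{8}{\tau}$ branch of the minimum tacitly uses $M>\tau$---merely make explicit points the paper's one-line argument leaves implicit.
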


Note that \Cref{cor:main} is more general than Theorem~\ref{thm:main} since $M=2\pi$ is always a valid choice.

\begin{proof}
From \Cref{lemma:taumax}, if ${\bf x} = (x_1,\ldots,x_n)$ is not synchronized, then
\[
\product{1, |f'''|_+}_{L_2} \geq \min\left(\frac{8}{\tau}, \frac{4}{\tau}+\frac{4}{\tau_{\text{max}}({\bf x})}\right)f'(0) > 4\left(\frac{1}{\tau} + \frac{1}{M}\right) f'(0)
\]
because $\tau_{\text{max}}({\bf x})<M$, which is a contradiction.
\end{proof}

\begin{remark}
In many cases, such as the Kuramoto model and self-attention dynamics, it is straightforward to show that $\tau_{\text{max}}({\bf x})<\pi$ for ${\bf x}$ to be stable, stationary, and non-synchronized, which leads to an improvement upon \Cref{thm:main} by setting $M=\pi$ in \Cref{cor:main}. For examples of such results, see \cite[Lemma 6.4]{mathpersp25} and \cite[Lemma 10]{synchronization_nonlinear}.
\end{remark}

\begin{corollary}
\label{corr:variableswitch}
Assume that $f'(0) \geq 0$ and $\tau\product{1, |f'''|_+}_{L_2} \leq 4\left(1+\frac{\tau}{2\pi}\right)f'(0)$. Suppose $a,b\in\mathbb{R}$ satisfy $ab\geq 0$ and $a$ and $b$ are not both zero. Assume that 
\begin{equation}
\label{eq:condition3}
\sum_{j=1}^n af(x_i-x_j)-bf(x_j-x_i) = 0, \, \forall i \in [n]
\end{equation}
and \pref{eq:condition2} are satisfied at $(x_1, \ldots, x_n)$. Then, $x_i=x_j$ for all $i,j\in[n]$.
\end{corollary}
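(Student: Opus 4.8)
The plan is to observe that \pref{eq:condition3} is exactly the stationarity condition \pref{eq:condition1} for the modified interaction function
\[
\tilde f(x) \,:=\, a f(x) - b f(-x),
\]
and then to rerun the proof of \Cref{thm:main} — in the packaged form of \Cref{cor:main} with $M=2\pi$, which is itself \Cref{lemma:taumax} — for the dynamics \pref{eq:generalsystem2} driven by $\tilde f$ instead of $f$. First I would normalize signs: since $ab\ge 0$, either $a,b\ge 0$ or $a,b\le 0$, and replacing $(a,b)$ by $(-a,-b)$ leaves \pref{eq:condition3} and the conditions $ab\ge 0$, $(a,b)\ne(0,0)$ unchanged, so we may assume $a,b\ge 0$. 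Using $f(x_j-x_i)=f\big(-(x_i-x_j)\big)$, condition \pref{eq:condition3} reads $\sum_{j=1}^n \tilde f(x_i-x_j)=0$ for all $i$, which is precisely \pref{eq:condition1} for $\tilde f$. One may also assume $f'(0)>0$: if $f'(0)=0$ the hypothesis forces $\product{1,|f'''|_+}_{L_2}=0$, whence $f'\equiv 0$ by $2\pi$-periodicity — a degenerate case we exclude.

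Next I would verify that $\tilde f$ inherits every hypothesis of \Cref{cor:main}. From $\tilde f'(x)=a f'(x)+b f'(-x)$ and the symmetry of $[-\tau,\tau]$ about $0$, both summands are $\le 0$ (and not simultaneously $0$) for $x\notin[-\tau,\tau]$, so $\tilde f'(x)<0$ there; hence the same $\tau$ is admissible for $\tilde f$, and $\tilde f'(0)=(a+b)f'(0)>0$. Since $\tilde f'''(x)=a f'''(x)+b f'''(-x)$ and $t\mapsto |t|_+$ is subadditive and positively homogeneous, $|\tilde f'''(x)|_+\le a|f'''(x)|_++b|f'''(-x)|_+$, so integrating over the symmetric interval gives $\product{1,|\tilde f'''|_+}_{L_2}\le (a+b)\product{1,|f'''|_+}_{L_2}$; multiplying by $\tau$ and using the hypothesis on $f$,
\[
\tau\product{1,|\tilde f'''|_+}_{L_2}\le (a+b)\,\tau\product{1,|f'''|_+}_{L_2}\le 4\Big(1+\tfrac{\tau}{2\pi}\Big)(a+b)f'(0)=4\Big(1+\tfrac{\tau}{2\pi}\Big)\tilde f'(0),
\]
which is the inequality required of $\tilde f$. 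It remains to transfer cut-stability: for any cut $S$,
\[
\sum_{i\in S,\,j\in S^C}\tilde f'(x_i-x_j)=a\sum_{i\in S,\,j\in S^C}f'(x_i-x_j)+b\sum_{i\in S,\,j\in S^C}f'(x_j-x_i),
\]
and the first sum is $\ge 0$ by \pref{eq:condition2} for $f$ at $S$, while the second, after interchanging $i$ and $j$, equals $b\sum_{i\in S^C,\,j\in S}f'(x_i-x_j)\ge 0$ by \pref{eq:condition2} for $f$ at $S^C$; so \pref{eq:condition2} holds for $\tilde f$ at $(x_1,\ldots,x_n)$. Then \Cref{lemma:taumax} applied to $\tilde f$ closes the argument: a non-synchronized configuration would satisfy $\product{1,|\tilde f'''|_+}_{L_2}\ge \min\big(\tfrac{8}{\tau},\tfrac{4}{\tau}+\tfrac{4}{\tau_{\text{max}}}\big)\tilde f'(0)>4\big(\tfrac1{\tau}+\tfrac1{2\pi}\big)\tilde f'(0)$, using $\tau\le\pi$ and $\tau_{\text{max}}(x_1,\ldots,x_n)<2\pi$, which contradicts the displayed bound; hence $x_1=\cdots=x_n$.

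The step I expect to require the most care is the transfer of \pref{eq:condition2}: the argument invokes it at both $S$ and $S^C$, whereas it is stated only for cuts $S$ on which the $x_i$ are constant. This is not a genuine obstacle, since cut-stability is the linear-instability condition (absence of an escape direction) and as such is imposed at \emph{every} cut $S$ — indeed the proof of \Cref{lemma:twoconsecutive_general} already uses \pref{eq:condition2} at cuts that are not monochromatic — but it is worth spelling out. Everything else is routine: the three derivative identities for $\tilde f$, the elementary bound $|au+bv|_+\le a|u|_++b|v|_+$ for $a,b\ge 0$, and the observation that $\tau_{\text{max}}(\mathbf x)<2\pi$ for every non-synchronized $\mathbf x$.
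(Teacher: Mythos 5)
Your proposal is correct and follows essentially the same route as the paper: the paper likewise sets $g(x)=|a|f(x)-|b|f(-x)$ (your $\tilde f$ after sign normalization), checks $g'(0)=(|a|+|b|)f'(0)$ and $|g'''|_+\le|a||f'''(x)|_++|b||f'''(-x)|_+$, transfers \pref{eq:condition1} and \pref{eq:condition2} to $g$, and invokes \Cref{thm:main}. The point you flag about \pref{eq:condition2} being needed at $S^C$ (and hence at non-monochromatic cuts) is handled implicitly in the paper's one-line assertion that $g$ is cut-stable, consistent with how \pref{eq:condition2} is in fact used and verified elsewhere (e.g.\ in \Cref{lemma:twoconsecutive_general} and in the proof of \Cref{thm:ascentnormal}), so your more explicit treatment is compatible rather than a departure.
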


\begin{proof}
Let $g(x)=|a|f(x)-|b|f(-x)$. Then, $g'(x)=|a|f'(x) + |b|f'(-x)$ and $g'''(x) = |a|f'''(x)+|b|f'''(-x)$. We have that $g$ satisfies \pref{eq:condition1} and \pref{eq:condition2} at $(x_1,\ldots,x_n)$. Furthermore, $g'(0)=(|a|+|b|)f'(0) \geq 0$ and $|g'''(x)|_+ \leq |a||f'''(x)|_+ + |b||f'''(-x)|_+$ so $\tau \product{1, |g'''|_+}_{L_2} \leq \tau (|a|+|b|)\product{1, |f'''|_+}_{L_2}\leq 4\left(1+\frac{\tau}{2\pi}\right)g'(0)$. The result follows from \Cref{thm:main}.
\end{proof}

\section{Gradient ascent systems}
\label{sec:normalized}

The goal of this section is to prove \Cref{thm:ascentnormal}, which characterizes the asymptotic
behavior of (adjusted) mean-field gradient ascent systems. Following the statement of the theorem, assume that $f(x)=\sin(x)h(\cos(x))$ for some real-analytic function $h$ on an open set containing $[-1,1]$; recall that $g: (\mathbb{S}^1)^n\rightarrow \mathbb{R}_{>0}$ is smooth. 

As explained in \Cref{lemma:converge}, gradient ascent almost always converges to a stationary point. The key idea of the proof is to use \Cref{lemma:converge} to argue that the stationary point almost always has negative semi-definite Hessian and then apply \Cref{thm:main} to characterize the stationary point as being synchronized. As part of the proof, we show that a stationary point with negative semi-definite Hessian satisfies conditions \pref{eq:condition1} and \pref{eq:condition2} so that we can apply \Cref{thm:main}. 

We rewrite \pref{eq:systemnormalized} in terms of points on the manifold $(\mathbb{S}^1)^n$. Note that this is the setting that \cite{mathpersp25,metastability} originally consider. For $i\in [n]$ and $t\geq 0$, let 
\[
\p_i(t)=(\cos(x_i(t)),\,\sin(x_i(t)))\in \mathbb{S}^1.
\]
For a point $\p_i \in \mathbb{S}^1$ let us introduce $\p_i^\perp$ to be (the unique) positively oriented unit
vector in the tangent space at $\p_i$. Let us denote by $P_{\p_i^\perp}(\cdot)$ the linear operator
orthogonally projecting $\mathbb{R}^2$ onto the span of $\p_i^\perp$. With this notation we can
rewrite~\eqref{eq:systemnormalized} as
\begin{equation}\tag{S2'}
\label{eq:systemnormalized2}
\dot{\p}_i(t) = \frac{1}{g_i(\p(t))} \sum_{j=1}^n h(\product{\p_i(t), \p_j(t)}) P_{\p_i(t)^\perp}(\p_j(t))\, \forall i\in [n],
\end{equation}
where $\p(t)=(\p_1(t),\ldots,\p_n(t))\in (\mathbb{S}^1)^n$ for $t\geq 0$. To see the equivalence,
note that $P_{\p_i(t)^\perp}\p_j(t)=-\sin(x_i(t)-x_j(t))\p_i(t)^\perp$ and $\product{\p_i(t), \p_j(t)}=\cos(x_i(t)-x_j(t))$.

Next, we rewrite \pref{eq:systemnormalized2} as the gradient of a function over a Riemannian manifold. Suppose $\varphi(x) = \int_0^x h(x) dx$, which is also an analytic function. Let
\[
E(x_1,\ldots,x_n) = \frac{1}{2}\sum_{i,j=1}^n \varphi(\product{x_i, x_j}).
\]
In order to describe the system as gradient ascent, we construct a Riemannian manifold such that the gradient computed with respect to its metric is the dynamics of \pref{eq:systemnormalized2}. We follow the ideas of \cite[Section 3.4]{mathpersp25} to state and prove the following lemma.

\begin{lemma}
\label{lemma:riemannian}
Suppose the Riemannian manifold $M$ over $(\mathbb{S}^1)^n$ has positive-definite inner product $\product{(a_1, \ldots, a_n), (b_1, \ldots, b_n)}_P = \sum_{i=1}^n \alpha_i(P)a_ib_i$ at $P=(p_1,\ldots,p_n) \in(\mathbb{S}^1)^n$. Then,
\[
(\nabla_M)_i E(P) = \frac{1}{\alpha_i(P)} \sum_{j=1}^n h(\product{p_i, p_j}) P_{p_i^\perp}(p_j)\,\forall i\in[n].
\]
\end{lemma}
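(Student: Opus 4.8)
The plan is to compute the Riemannian gradient $\nabla_M E$ directly from its defining property, namely that for every tangent vector $V = (v_1,\ldots,v_n) \in T_P M$ we have $\langle \nabla_M E(P), V\rangle_P = dE_P(V)$, where the left-hand side uses the given metric $\langle\cdot,\cdot\rangle_P = \sum_i \alpha_i(P) a_i b_i$. First I would parametrize the tangent space: since each $p_i \in \mathbb{S}^1$, a tangent vector at $p_i$ is of the form $v_i = c_i\, p_i^\perp$ for a scalar $c_i$, and the metric becomes $\langle V, W\rangle_P = \sum_i \alpha_i(P)\, c_i d_i$ in these scalar coordinates. So it suffices to identify the scalar coordinates of $\nabla_M E$.

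Next I would compute the Euclidean differential $dE_P(V)$. Writing $E = \tfrac12\sum_{i,j}\varphi(\langle p_i,p_j\rangle)$ and differentiating, the $\langle p_i,p_j\rangle$ term contributes $\varphi'(\langle p_i,p_j\rangle)(\langle v_i,p_j\rangle + \langle p_i,v_j\rangle)$; using $\varphi' = h$ and collecting the coefficient of each $v_i$ (the double sum is symmetric, which cancels the factor $\tfrac12$), one gets $dE_P(V) = \sum_{i=1}^n \big\langle v_i,\ \sum_{j=1}^n h(\langle p_i,p_j\rangle) p_j\big\rangle$. Now substitute $v_i = c_i p_i^\perp$: since $\langle c_i p_i^\perp, w\rangle = c_i \langle p_i^\perp, w\rangle = c_i\, \langle p_i^\perp, P_{p_i^\perp}(w)\rangle$, and since $P_{p_i^\perp}(w)$ is a scalar multiple of the unit vector $p_i^\perp$, we may write $P_{p_i^\perp}(w) = \langle p_i^\perp, w\rangle\, p_i^\perp$, so that $dE_P(V) = \sum_i c_i\, \langle p_i^\perp,\ \sum_j h(\langle p_i,p_j\rangle) p_j\rangle = \sum_i c_i\, \big\langle p_i^\perp,\ \sum_j h(\langle p_i,p_j\rangle) P_{p_i^\perp}(p_j)\big\rangle$, the last equality because projecting onto $p_i^\perp$ before taking the inner product with $p_i^\perp$ changes nothing.

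Finally I would match this against the metric. Writing $\nabla_M E(P)$ in scalar coordinates as $(\gamma_1,\ldots,\gamma_n)$ with $i$-th tangent component $\gamma_i p_i^\perp$, the defining identity reads $\sum_i \alpha_i(P)\gamma_i c_i = \sum_i c_i\langle p_i^\perp, \sum_j h(\langle p_i,p_j\rangle) P_{p_i^\perp}(p_j)\rangle$ for all choices of $(c_i)$, hence $\alpha_i(P)\gamma_i = \langle p_i^\perp, \sum_j h(\langle p_i,p_j\rangle) P_{p_i^\perp}(p_j)\rangle$. Since $\sum_j h(\langle p_i,p_j\rangle) P_{p_i^\perp}(p_j)$ is already a vector in the one-dimensional span of $p_i^\perp$, its scalar coordinate is exactly that inner product, so $(\nabla_M E(P))_i = \gamma_i p_i^\perp = \tfrac{1}{\alpha_i(P)}\sum_j h(\langle p_i,p_j\rangle) P_{p_i^\perp}(p_j)$, which is the claimed formula.

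There is no serious obstacle here; this is a routine gradient-with-respect-to-a-weighted-metric computation, and the only point requiring a little care is keeping the bookkeeping straight between abstract tangent vectors, their scalar coordinates in the frame $\{p_i^\perp\}$, and the fact that the metric is diagonal in that frame. The mildly delicate step is the symmetrization of the double sum that removes the $\tfrac12$, and remembering that $\mathbb{S}^1$ being one-dimensional is what makes ``the $i$-th component is a scalar times $p_i^\perp$'' legitimate — the same argument on $\mathbb{S}^d$ with $d>1$ would need $\alpha_i(P)$ to act as a scalar on the whole tangent plane, which is exactly the form assumed here.
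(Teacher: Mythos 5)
Your proposal is correct and follows essentially the same route as the paper: both verify the defining identity $\langle \nabla_M E(P), V\rangle_P = dE_P(V)$ by computing the directional derivative of $E$, using the symmetry of the double sum and the orthogonality of tangent vectors to $p_i$ to insert the projection $P_{p_i^\perp}$, and then matching against the diagonal metric. The only cosmetic difference is that the paper differentiates along one-particle rotation flows $e^{At}p_i$ with skew-symmetric $A$ (so orthogonality appears as $\langle Ap_1,p_1\rangle=0$), whereas you differentiate directly with tangent vectors written as $c_i p_i^\perp$.
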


\begin{proof}
Let $Y$ be a vector field over $(\mathbb{S}^1)^n$ with gradient flow $\Phi_Y^t$. Furthermore, suppose the vector field $B$ satisfies
\[
B_i(P) = \frac{1}{\alpha_i(P)} \sum_{j=1}^n h(\product{p_i, p_j}) P_{P_i^\perp}(P_j)
\]
for $i\in [n]$. Then, it suffices to prove that 
\[
\frac{d}{dt}\bigg|_{t=0} E(\Phi_Y^t(P)) = \product{Y(P), B(P)}_P.
\]

By considering a linear basis over $T_P(\mathbb{S}^1)^n$, we only need to show this holds when $Y(P)=(Ap_1, 0, \ldots, 0)$ for a non-zero skew symmetric $A$. In this case, $\Phi_Y^t(P) = (e^{At}p_1, p_2, \\ \ldots, p_n)$, so
\[
E(\Phi_Y^t(P)) = \sum_{j\not=1} \varphi(\product{e^{At}p_1, p_j}) + \frac{1}{2}\varphi(\product{e^{At}p_1, e^{At}p_1}) + C,
\]
where $C$ does not depend on $t$. Thus,
\begin{align*}
& \frac{d}{dt} E(\Phi_Y^t(P)) = \sum_{j\not=1} h(\product{e^{At}p_1, p_j})\product{Ae^{At}p_1, p_j} + h(\product{e^{At}p_1, e^{At}p_1}) \product{Ae^{At}p_1, p_1} \\
& \Rightarrow \frac{d}{dt}\bigg|_{t=0} E(\Phi_Y^t(P)) = \sum_{j=1}^n h(\product{p_1, p_j}) \product{Ap_1, p_j}.
\end{align*}
It suffices to prove that 
\[
\sum_{j=1}^n h(\product{p_1, p_j}) \product{Ap_1, p_j} = \product{Y(P), B(P)}_P = \product{Ap_1, \sum_{j=1}^n h(\product{p_1, p_j}) P_{p_1^\perp}(p_j)}.
\]
Hence, it suffices to prove that for all $v\in\mathbb{S}^1$,
\[
\product{Ap_1, v} = \product{Ap_1, P_{p_1^\perp}(v)} = \product{Ap_1, v - \product{v, p_1}p_1},
\]
which would be implied by $\product{Ap_1, p_1} = 0$. Observe that $\product{Ap_1, p_1} = p_1^{\top}Ap_1$, which equals $0$ because $A$ is skew-symmetric.
\end{proof}

\Cref{lemma:converge} implies the almost always convergence to a critical point of $E$ with a negative semidefinite Hessian. However, as explained in \cite[Remark B.1]{mathpersp25}, when analyzing whether the Hessian is negative semidefinite at a critical point, any two metrics are equivalent. We state this classical idea in the following lemma, and afterwards, we use it to prove the main result.

\begin{lemma}
\label{lemma:hessian}
Suppose $R_1=((\mathbb{S}^{d-1})^n, g_1)$ and $R_2=((\mathbb{S}^{d-1})^n, g_2)$ are Riemannian
manifolds. Let $P$ be a critical point of the analytic function $\gamma:(\mathbb{S}^{d-1})^n
\rightarrow \mathbb{R}$. For $1\leq i\leq 2$, let $H_i$ be the Hessian of $\gamma$ at $P$ with
respect to $R_i$.
Suppose $v\in T_P((\mathbb{S}^{d-1})^n)$. Then, $\langle H_1v, v\rangle_{R_1} > 0 \Leftrightarrow
\langle H_2 v, v\rangle_{R_2} > 0$.
\end{lemma}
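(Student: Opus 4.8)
The plan is to exploit the fact that at a \emph{critical} point the second-order behavior of $\gamma$ is coordinate-independent in a strong sense: the Hessian quadratic form, viewed as a bilinear form on $T_P((\mathbb{S}^{d-1})^n)$, does not depend on the Riemannian metric at all — only its representation as a self-adjoint operator does. More precisely, for any Riemannian metric $g$ with Levi-Civita connection $\nabla^g$, the Hessian operator $H_g$ is defined via $g(H_g u, v) = \mathrm{Hess}_\gamma(u,v)$, where $\mathrm{Hess}_\gamma(u,v) = u(v(\gamma)) - (\nabla^g_u v)(\gamma)$. At a critical point $P$, we have $d\gamma_P = 0$, so the term $(\nabla^g_u v)(\gamma) = d\gamma_P(\nabla^g_u v) = 0$ vanishes regardless of the connection. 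Hence $\mathrm{Hess}_\gamma(u,v) = u(v(\gamma))\big|_P$ is the same symmetric bilinear form $\mathcal{B}$ on $T_P((\mathbb{S}^{d-1})^n)$ for both $R_1$ and $R_2$. (Concretely, in any local chart it is just the ordinary Hessian matrix of second partials of $\gamma$ at $P$.)

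First I would record this identity $\mathrm{Hess}^{R_1}_\gamma = \mathrm{Hess}^{R_2}_\gamma =: \mathcal{B}$ as symmetric bilinear forms on $T_P$, citing the standard fact that the Hessian at a critical point is well-defined independently of connection (e.g.\ \cite{intromanifolds}). Then the operators are related by $g_1(H_1 u, v) = \mathcal{B}(u,v) = g_2(H_2 u, v)$ for all $u,v$. Next I would translate the sign condition on quadratic forms into the sign condition on $\mathcal{B}$: for the fixed vector $v$ in the statement,
\[
\langle H_1 v, v\rangle_{R_1} = g_1(H_1 v, v) = \mathcal{B}(v,v) = g_2(H_2 v, v) = \langle H_2 v, v\rangle_{R_2}.
\]
So the two quantities are not merely of the same sign — they are literally equal, which immediately gives the claimed equivalence $\langle H_1 v, v\rangle_{R_1} > 0 \iff \langle H_2 v, v\rangle_{R_2} > 0$.

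The only real subtlety — and the one step worth spelling out carefully — is justifying that the Hessian at a critical point is connection-independent, i.e.\ that the correction term $(\nabla^g_u v)(\gamma)$ genuinely drops out; this is where analyticity/smoothness of $\gamma$ and the criticality hypothesis $d\gamma_P = 0$ are used, and it is the crux of why the lemma is true rather than a routine change-of-variables. Everything else is bookkeeping. I would therefore structure the proof as: (i) state and briefly justify that $\mathrm{Hess}_\gamma$ at a critical point equals $u(v(\gamma))$, independent of metric; (ii) conclude $\mathcal{B}(v,v)$ is computed identically in $R_1$ and $R_2$; (iii) note $\langle H_i v, v\rangle_{R_i} = \mathcal{B}(v,v)$ by definition of $H_i$; (iv) read off the equivalence. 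I do not anticipate any genuine obstacle beyond being precise about definitions, since $P$ being critical is exactly the hypothesis that makes the argument go through.
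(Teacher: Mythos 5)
Your argument is correct, and it is exactly the standard fact the paper relies on: the paper itself gives no proof of this lemma, stating it as classical (citing Remark B.1 of \cite{mathpersp25}), and your observation that at a critical point the connection term $(\nabla^g_u v)(\gamma)=d\gamma_P(\nabla^g_u v)$ vanishes, so that $\langle H_1 v,v\rangle_{R_1}=\mathrm{Hess}_\gamma(v,v)=\langle H_2 v,v\rangle_{R_2}$ are literally equal, is precisely the intended justification. One small correction: analyticity of $\gamma$ plays no role here (it is needed elsewhere for {\L}ojasiewicz); only smoothness and the criticality hypothesis $d\gamma_P=0$ are used.
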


\begin{remark}
The set of critical points for both metrics are the same. We abuse notation and assume that
$H_i$ is a matrix expressing the Hessian in terms of an orthonormal basis for the metric $R_i$ at
$P$, and thus write $v^{\top}H_iv$ instead of $\langle H_i v,v\rangle_{R_i}$, implying that
$v \in T_P$ is itself expressed as a column vector in the orthonormal basis with respect to $R_i$
at $P$.
\end{remark}

\begin{proof}[Proof of \texorpdfstring{\Cref{thm:ascentnormal}}{}]

For this proof, we are working in the setting of points on $(\mathbb{S}^1)^n$ that we have introduced in this section. Let $R$ be the Riemannian manifold over $(\mathbb{S}^1)^n$ with positive-definite inner product $\product{(a_1, \ldots, a_n), (b_1, \ldots, b_n)}_P = \sum_{i=1}^n g_i(P)a_ib_i$ at $P$. Then, from \Cref{lemma:riemannian} with $\alpha_i=g_i$ for all $i\in [n]$,
\[
\dot{\p}(t) = \nabla_R E(\p(t))
\]
in \pref{eq:systemnormalized2}. Applying \Cref{lemma:converge} gives that we have almost-sure convergence to a critical point $P=(p_1,\ldots, p_n)$ with negative semi-definite Hessian. Then, if $H$ is the Hessian of $E$ at $P$ with respect to $R$ and expressed in terms of an orthonormal basis for the metric $\product{\cdot, \cdot}_P$, we assume that there does not exist $v\in T_P((\mathbb{S}^1)^n)$ such that $v^{\top}Hv>0$.

First, observe that for all $i\in [n]$,
\[
\sum_{j=1}^n h(\product{p_i, p_j})P_{p_i^\perp}(p_j) = 0
\]
by the definition of a critical point of \pref{eq:systemnormalized2}, which can in turn be
rewritten as $\sum_j h(\cos(x_i-x_j))\sin(x_j-x_i) = \sum_j f(x_j-x_i) = 0$, thus verifying condition \pref{eq:condition1}.

Let $S$ be the Riemannian manifold over $(\mathbb{S}^1)^n$ with the standard inner product. Let $H_S$ be the Hessian of $E$ at $P$ with respect to $S$ and expressed in terms of the standard orthonormal basis. By applying \Cref{lemma:hessian} with the Riemannian manifolds $R$ and $S$ and the function $E$ as $\gamma$, because there does not exist $v\in T_P(\mathbb{S}^1)^n)$ such that $v^{\top}Hv>0$, there does not exist $v\in T_P((\mathbb{S}^1)^n)$ such that $v^{\top}H_Sv>0$.

The next step is to show that $P$, when written as an element of $\mathbb{T}^n$, is cut-stable, cf.~\eqref{eq:condition2}, so that we can apply \Cref{thm:main}. For this purpose, we follow the method of \cite[Appendix A]{mathpersp25}. 

For $t\geq 0$, define
\[
p(t) = [e^{c_iBt}p_i]_{i\in [n]},
\]
where $B=\begin{pmatrix} 0 & -1\\
1 & 0\end{pmatrix}$ is skew-symmetric. First, observe that
\[
E(p(t)) = \sum_{i,j\in[n], i\not=j} \varphi(\product{e^{c_iBt}p_i, e^{c_jBt}p_j}) +C,
\]
where $C$ does not depend on $t$. 

Suppose we let $v=\frac{d}{dt}\big|_{t=0} p(t)$. Observe that $v_i=c_iBp_i$ for all $i\in [n]$, so $v\in T_P((\mathbb{S}^1)^n)$. Then, 
\begin{equation}
\label{eq:nonpositive}
\frac{d^2}{dt^2}\Big|_{t=0} E(p(t)) = v^{\top}H_Sv \leq 0.
\end{equation}

Next, where $h(x)=\frac{d}{dx}\varphi(x)$,
\begin{align*}
\frac{d}{dt}E(p(t)) = \sum_{i,j\in[n], i\not=j} h(\product{e^{c_iBt}p_i, e^{c_jBt}p_j})\left(\product{c_iBe^{c_iBt}p_i, e^{c_jBt}p_j} + \product{e^{c_iBt}p_i, c_jBe^{c_jBt}p_j}\right).
\end{align*}
Furthermore,
\begin{align*}
& \frac{d^2}{dt^2} E(p(t)) = \sum_{i,j\in[n],i\not=j} \\
& \bigg[ h'(\product{e^{c_iBt}p_i,e^{c_jBt}p_j)})\left(\product{c_iBe^{c_iBt}p_i, e^{c_jBt}p_j} + \product{e^{c_iBt}p_i, c_jBe^{c_jBt}p_j}\right)^2 \\& + h(\product{e^{c_iBt}p_i,e^{c_jBt}p_j)})\times  \bigg(\product{c_i^2B^2e^{c_iBt}p_i,e^{c_jBt}p_j} +\product{c_iBe^{c_iBt}p_i, c_jBe^{c_jBt}p_j} \\
& + \product{e^{c_iBt}p_i, c_j^2B^2e^{c_jBt}p_j}\bigg) \bigg]
\end{align*}

For $1\leq i\leq n$, let $x_i$ be the unique element of $\mathbb{T}$ such that $p_i=(\cos(x_i),\sin(x_i))$. Because $B^2=-I_2$, $\product{B^2p_i,p_j} = -\cos(x_i-x_j)$. Since $B$ is the rotation by $90^\circ$ matrix, $\product{Bp_i,p_j} = \cos(x_i+90-x_j) = \sin(x_j-x_i)$. Thus,
\begin{align*}
&\frac{d^2}{dt^2}\bigg|_{t=0} E(p(t)) \\ & =  \sum_{i,j\in[n],i\not=j} (c_i-c_j)^2(-\cos(x_i-x_j)h(\cos(x_i-x_j)) + \sin(x_i-x_j)^2h'(\cos(x_i-x_j))).
\end{align*}
Since $\frac{d^2}{dt^2}\big|_{t=0} E(p(t)) \leq 0$ by \pref{eq:nonpositive},
\[
\sum_{i,j\in[n],i\not=j} (c_i-c_j)^2(\cos(x_i-x_j)h(\cos(x_i-x_j)) - \sin(x_i-x_j)^2h'(\cos(x_i-x_j))) \geq 0.
\]
Observe that because $f(x)=\sin(x)h(\cos(x))$, $f'(x) = \cos(x)h(\cos(x)) - \sin(x)^2h'(\cos(x))$. Therefore,
\[
\sum_{i,j\in[n]} (c_i-c_j)^2f'(x_i-x_j) \geq 0,
\]
so \pref{eq:condition2} is satisfied by setting $c_i=\mathbf{1}\{i\in S\}$ for $S\subset [n]$. Then, by \Cref{thm:main}, the $x_i$ are synchronized, which finishes the proof.
\end{proof}

\begin{remark}
\label{remark:negative_semidef}
Observe that we have shown that if the critical point $P$ has negative semi-definite Hessian, then 
\[
\sum_{i,j\in[n]} (c_i-c_j)^2f'(x_i-x_j)\geq0
\]
for all $c_i,c_j\in\mathbb{R}$. This result is well-known, but we include the computations for completeness. The other direction is true as well, since for any $v\in T_P((\mathbb{S}^1)^n)$, we have that $v_i=c_iBp_i$ for some $c_i\in\mathbb{R}$ for all $i\in [n]$.
\end{remark}

\section{Application to self-attention dynamics}
\label{sec:application}

In this section, we prove \Cref{cor:conjecture}, \Cref{cor:conjecturenormal} and
\Cref{cor:negativebeta}.

\begin{proof}[Proof of \texorpdfstring{\Cref{cor:conjecture}}]

For $\beta\geq 0$, system \pref{eq:selfattention} is equivalent to \pref{eq:generalsystem2} with $f(x)=\sin(x)\\e^{\beta(\cos(x)-1)}$ and $g=1$. By \Cref{thm:ascentnormal}, it suffices to show that $\tau\product{1, |f'''|_+} < 4f'(0)$. We prove this by considering cases for $\beta$. 

First, observe that
\begin{align*}
& f'(x) = (\cos(x)-\beta\sin(x)^2)e^{\beta(\cos(x)-1)}, \\
& f''(x) = (-\sin(x)-3\beta\sin(x)\cos(x)+\beta^2\sin(x)^3)e^{\beta(\cos(x)-1)}, \\
& f'''(x) = -(\cos(x)+3\beta\cos(x)^2 - 4\beta\sin(x)^2 -6\beta^2\cos(x)\sin(x)^2 + \beta^3\sin(x)^4)e^{\beta(\cos(x)-1)}.
\end{align*}
We reference these expressions later. Furthermore, it is clear that we can set
\[
\tau = \arccos\left(\frac{\sqrt{1+4\beta^2}-1}{2\beta}\right).
\]

\smallskip
\textbf{Case of $\beta> \frac{1}{3}$.}
The positive regions of $f'''$ over $(-\pi, \pi]$ are $(-a, -b)\cup (b, a)$ for some $a,b$ such that $0<a<b<\pi$, see \Cref{lemma:positiveregion}. Since $f'''$ is even, it suffices to prove that 
\[
\product{\tau, \mathbf{1}\{(-a, -b)\cup (b, a)\}f'''} < 4\Leftrightarrow \tau(f''(a)-f''(b)) < 2,
\]
which follows from \Cref{lemma:beta1}, \Cref{lemma:beta_0.75}, \Cref{lemma:beta_0.5}, and \Cref{lemma:beta_0.33}.

\smallskip
\textbf{Case of $0 < \beta \leq \frac{1}{3}$.}
The positive region of $f'''$ over $[0, 2\pi)$ is $(a, 2\pi-a)$ for some $a\in (0, \frac{\pi}{2})$, see \Cref{lemma:positiveregion2}. Then, it suffices to prove that 
\[
\product{\tau, \mathbf{1}\{(a, 2\pi-a)\}f'''} < 4 \Leftrightarrow \tau f''(a) > -2,
\]
which is proved in \Cref{lemma:beta_final}.

\smallskip
\textbf{Case of $\beta=0$.}
This corresponds to the Kuramoto model. In this case, $f'(x) = \cos(x)$ and $\tau=\frac{\pi}{2}$, because if $x\in (\frac{\pi}{2}, \frac{3\pi}{2})$ then $f'(x)$ is negative. Then, $f''(x) = -\sin(x)$ and $f'''(x) = -\cos(x)$ has positive region $(\frac{\pi}{2}, \frac{3\pi}{2})$ in $[0, 2\pi)$. Using the notation for the $\beta\in(0, \frac{1}{3}]$ case, $a=\frac{\pi}{2}$ and it suffices to prove that $\tau f''(a)>-2$, which is true because $\tau f''(a) = -\frac{\pi}{2} > -2$.

\smallskip
\textbf{Case of $-0.16 \leq \beta < 0$.}
From \Cref{lemma:semicircle}, we may set $M=\pi$ in \Cref{cor:main}. Afterwards, we prove that global synchronization occurs in \pref{eq:selfattention} by applying \Cref{cor:main} with $M=\pi$ and $g_i=1$ for all $i$. By setting $g_i(x_1,\ldots,x_n)=\sum_{j=1}^n e^{\beta\cos(x_i-x_j)}$, we also show that global synchronization occurs in \pref{eq:selfattentionnormal}.

By \Cref{cor:main}, it suffices to prove that $\tau\product{1,|f'''|_+} \leq 4\left(1+\frac{\tau}{\pi}\right)$. The positive region of $f'''$ over $(-\pi, \pi]$ is $(a, 2\pi-a)$ for some $a\in (\frac{\pi}{2}, \pi)$, see \Cref{lemma:positiveregion3}. Since $f'''$ is even, it suffices to prove that
\[
\product{\tau, \mathbf{1}\{(a, 2\pi-a)\}f'''}\leq 4\left(1+\frac{\tau}{\pi}\right)\Leftrightarrow \tau f''(a) \geq 2\left(1+\frac{\tau}{\pi}\right),
\]
which follows from \Cref{lemma:beta_negative}.
\end{proof}

\medskip

\begin{proof}[Proof of \Cref{cor:conjecturenormal}]

This follows from the same argument as the proof of \Cref{cor:conjecture} but with $g_i(x_1,\ldots,x_n) = \sum_{j=1}^n e^{\beta\cos(x_i-x_j)}$ for $i\in [n]$ in \pref{eq:systemnormalized}, which is a smooth function.
\end{proof}

\medskip

\begin{proof}[Proof of \Cref{cor:negativebeta}]

From \Cref{lemma:beta_nonsync}, if $\beta<-\frac{2}{3}$ and $n$ is divisible by three or $n$ is sufficiently large, then there exists a stable nonsynchronized stationary point. At this point, the Hessian has one zero eigenvalue which corresponds to translating each point by the same displacement and its other eigenvalues are negative. This implies that global synchronization does not occur.
\end{proof} 

\section{Generalized system}
\label{sec:generalizedsystem}

One of the extensions following Kuramoto's work was introduced by~\cite{Taylor2012} in the
following form:
\begin{equation}
\label{eq:generalsystem}
\dot{x}_i(t) = -\sum_{j=1}^n a_{i,j}f(x_i(t)-x_j(t)), \forall i\in[n],
\end{equation}
where $A=(a_{i,j})_{1\leq i,j\leq n}\in\mathbb{R}^{n\times n}$ is a weight matrix and
$f:\mathbb{T} \to \mathbb{R}$ is an interaction function.  Taylor~\cite{Taylor2012} showed
synchronization result for $f(x) = \sin(x)$ and $A$ being an adjacency matrix of an
(undirected) graph with each vertex having degree $\ge 0.94n$. Subsequent works eventually improved the lower bound
on degree to $0.75n$~\cite{dense_kuramoto}, while also showing graphs with each vertex of degree $\ge 0.6838n$, which do
not synchronize~\cite{lowerbound}. It is conjectured that there exists graphs with min-degree approaching $0.75n$ which do not synchronize. Furthermore, expander graphs have been utilized to
show that generating $A$ using a random process of adding edges leads to global synchronization
once $A$ is connected \cite{expander,randomgraph}.

In this section, we will extend our criterion to the special case of rank-1 matrices $A$. Initially,
we will only consider the following version:
\begin{equation}\tag{S3}
\label{eq:generalsystem3}
\dot{x}_i(t) = -\sum_{j=1}^n c_jf(x_i(t)-x_j(t)), \forall i\in[n],
\end{equation}
where $c_j> 0$ for $1\leq j\leq n$. This system generalizes \pref{eq:generalsystem2} by allowing different weights for each particle. We now state the
analogous stationarity and cut-stability conditions.

Suppose ${\bf x} = (x_i)_{1\leq i\leq n} \in \torus^n$. For the system~\eqref{eq:generalsystem3} a point is \textit{stationary} iff
\begin{equation}\tag{C3}
\label{eq:gencondition1}
\sum_{j=1}^n c_jf(x_i-x_j)=0, \, \forall i\in[n].
\end{equation}
We say that point $\bf x$ is \textit{cut-stable} if
\begin{equation}\tag{C4}
\label{eq:gencondition2}
\sum_{i\in S,\,j\in S^C} c_jf'(x_i - x_j) \geq 0
\end{equation} 
for all $S\subset [n]$ such that the value of $x_i$ is the same for all $i\in S$.

We state the following result, which generalizes \Cref{cor:main}. Note that \Cref{thm:main} is an implication of this corollary. 

\begin{theorem}
\label{thm:maingen}
Assume that $M$ is a positive real number such that for all stable, stationary, and non-synchronized points $\bf x$ of \pref{eq:generalsystem3}, $\tau_{\text{max}}({\bf x}) < M$. If 
\[
\tau\int_{-\pi}^\pi |f'''(x)|_+ dx\leq 4\left(1+\frac{\tau}{M}\right)f'(0),
\]
then every stationary and stable point $(x_1, \ldots, x_n)$ of system~\eqref{eq:generalsystem3} on $\mathbb{T}^n$ is synchronized, i.e. $x_1=\cdots=x_n$, where $\tau$ is as in \Cref{thm:main}.
\end{theorem}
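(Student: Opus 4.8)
The plan is to run the proof of \Cref{cor:main} (equivalently \Cref{thm:main}) with the per-particle weights $c_j$ carried through every step; \Cref{thm:main} is then recovered by taking $c_j\equiv 1$ and $M=2\pi$.

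First, the reduction. It suffices to show that any $\mathbf{x}$ satisfying \eqref{eq:gencondition1} and \eqref{eq:gencondition2} that is not synchronized must violate the hypothesis $\tau\product{1,|f'''|_+}_{L_2}\le 4\bigl(1+\tfrac{\tau}{M}\bigr)f'(0)$. That a stable stationary point satisfies \eqref{eq:gencondition1} is immediate. For cut-stability, observe that the Jacobian of \eqref{eq:generalsystem3} at a stationary point has off-diagonal entries $c_j f'(x_i-x_j)$ and is not symmetric, but conjugating it by $\operatorname{diag}(\sqrt{c_1},\dots,\sqrt{c_n})$ produces a matrix with off-diagonal entries $\sqrt{c_ic_j}\,f'(x_i-x_j)$, which is symmetric since $f'$ is even (cf.\ \Cref{sec:normalized}). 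The two matrices being similar and the second symmetric, ``no eigenvalue with positive real part'' is equivalent to the second matrix being negative semidefinite, which --- after the substitution $v_i=\sqrt{c_i}\,u_i$ in its quadratic form --- is in turn equivalent to $\sum_{i,j}c_ic_j(u_i-u_j)^2 f'(x_i-x_j)\ge 0$ for all real $u$. Taking $u=\mathbf{1}_S$ yields $\sum_{i\in S,\,j\in S^C} c_ic_j f'(x_i-x_j)\ge 0$ for \emph{every} $S\subset[n]$; specializing $S$ to a single value class recovers \eqref{eq:gencondition2}, while the general-$S$ form is exactly what is used in \Cref{lemma:twoconsecutive_general} below.

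Next, re-prove the two lemmas behind \Cref{cor:main} in weighted form. \Cref{lemma:twoconsecutive_general} is unchanged in substance: if two distinct gaps exceed $\tau$, a line crossing both their interiors induces a cut $S$ for which every cross-pair difference lies outside $[-\tau,\tau]\pmod{2\pi}$, so $f'(x_i-x_j)<0$ for all $i\in S$, $j\in S^C$, contradicting $\sum_{i\in S,\,j\in S^C} c_ic_j f'(x_i-x_j)\ge 0$ since all $c_ic_j>0$; thus at most one gap exceeds $\tau$. For the analog of \Cref{lemma:taumax}, set $\varphi(x)=\sum_{j} c_j f'(x-x_j)$ and $W_i\triangleq\sum_{j:\,x_j=\theta_i}c_j$. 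Then \eqref{eq:gencondition2} gives $\varphi(\theta_i)\ge W_i f'(0)$, and \eqref{eq:gencondition1} gives $\int_{\theta_i}^{\theta_{i+1}}\varphi = \sum_j c_j f(\theta_{i+1}-x_j)-\sum_j c_j f(\theta_i-x_j)=0$. Running the same integration by parts against $\Psi_i(x)=\tfrac{a_i}{2}(x-\theta_i)(\theta_{i+1}-x)$, bounding $|\varphi''|_+\le \sum_j c_j\,|f'''(x-x_j)|_+$ (positivity of the $c_j$ is essential here), setting $a_i=1/\Delta_i$, and using $\product{1,|f'''(x-x_j)|_+}_{L_2}=\product{1,|f'''|_+}_{L_2}$ by periodicity, one obtains
\[
C\,\product{1,|f'''|_+}_{L_2}\;\ge\;4\sum_{i=1}^K \frac{W_i+W_{i+1}}{\Delta_i}\,f'(0),\qquad C\triangleq\sum_{j=1}^n c_j .
\]
Since at most one gap, say $\Delta_\ell=\tau_{\text{max}}(\mathbf{x})$, exceeds $\tau$, and $\sum_i W_i=C$ so that $W_\ell+W_{\ell+1}\le C$ (valid as $K\ge 2$), the arithmetic of \Cref{lemma:taumax} goes through verbatim and yields $\product{1,|f'''|_+}_{L_2}\ge\min\!\bigl(\tfrac{8}{\tau},\tfrac{4}{\tau}+\tfrac{4}{\tau_{\text{max}}(\mathbf{x})}\bigr)f'(0)$ for any non-synchronized $\mathbf{x}$.

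Finally, since $\tau_{\text{max}}(\mathbf{x})<M$ by the hypothesis on $M$, this contradicts $\tau\product{1,|f'''|_+}_{L_2}\le 4(1+\tau/M)f'(0)$ exactly as in the proof of \Cref{cor:main}; hence every stable stationary point $\mathbf{x}$ is synchronized. The only genuinely new content relative to the unweighted argument is the bookkeeping $N_i\mapsto W_i$, $n\mapsto C$, which is forced by the two places where the weights enter --- the bound $|\varphi''|_+\le\sum_j c_j|f'''(x-x_j)|_+$ and the inequality $\varphi(\theta_i)\ge W_i f'(0)$ coming from \eqref{eq:gencondition2}. I expect the only step requiring real care to be confirming that local stability still delivers the cut inequality in the form \Cref{lemma:twoconsecutive_general} needs --- over arbitrary cuts, not merely value classes --- which the $\operatorname{diag}(\sqrt{c_i})$ symmetrization above supplies; everything downstream reproduces \Cref{lemma:twoconsecutive_general}, \Cref{lemma:taumax} and \Cref{cor:main} with $W_i$ and $C$ in place of $N_i$ and $n$.
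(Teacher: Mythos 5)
Correct, and essentially the paper's own proof: the paper establishes \Cref{thm:maingen} by rerunning the argument for \Cref{thm:main} (i.e.\ \Cref{lemma:twoconsecutive_general}, \Cref{lemma:taumax}, and \Cref{cor:main}) with $\varphi(x)=\sum_{j}c_jf'(x-x_j)$ and $W_i=\sum_{j:\,x_j=\theta_i}c_j$ replacing $N_i$, which is exactly your bookkeeping $N_i\mapsto W_i$, $n\mapsto C=\sum_j c_j$. Your $\operatorname{diag}(\sqrt{c_1},\dots,\sqrt{c_n})$ symmetrization of the non-symmetric Jacobian, yielding $\sum_{i\in S,\,j\in S^C}c_ic_jf'(x_i-x_j)\ge 0$ for \emph{arbitrary} cuts $S$, is a worthwhile addition rather than a deviation: the paper's proof only says it ``requires \eqref{eq:gencondition2}'' (deriving cut-stability from stability explicitly only in the gradient setting of \Cref{thm:ascentnormalgen}), and the arbitrary-cut form is indeed what the weighted version of \Cref{lemma:twoconsecutive_general} needs.
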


\begin{proof}
The same proof of \Cref{thm:main} in \Cref{sec:mainproof} can be used, except with $\Psi(x)\triangleq\sum_{j=1}^n c_jf'(x-x_j)$ and $W_i\triangleq \sum_{j\in [n]: x_j=\theta_i} c_j$ replacing $N_i$. Similarly, we only require \pref{eq:gencondition2} for the proof.
\end{proof}

It is not immediately clear that global synchronization occurs in this setting, since we no longer have an obvious gradient ascent structure. First, we normalize \pref{eq:generalsystem3}. 

Assume that $g: \mathbb{T}^n\rightarrow \mathbb{R}_{>0}^n$ is smooth. Then, we can normalize the system as
\begin{equation}\tag{S4}
\label{eq:systemnormalized3}
\dot{x}_i(t) = -\frac{1}{g_i(x_1(t),\ldots,x_n(t))} \sum_{j=1}^n c_jf(x_i(t)-x_j(t)), \, 1\leq i\leq n,
\end{equation}
which allows us to state the following result, which generalizes \Cref{thm:ascentnormal} and is stated in the format of \Cref{cor:main}.

\begin{theorem}
\label{thm:ascentnormalgen}
Assume that $f(x)=\sin(x)h(\cos(x))$, where $h$ is a real-analytic function on an open set containing $[-1,1]$. Assume that $M$ is a positive real number such that for all stable, stationary, and non-synchronized points $\bf x$ of \pref{eq:systemnormalized3}, $\tau_{\text{max}}({\bf x}) < M$. Furthermore, assume that $\tau\product{1, |f'''|_+}\leq 4\left(1+\frac{\tau}{M}\right)f'(0)$, where $\tau$ is as in Theorem~\ref{thm:main}. Then, global synchronization occurs in \pref{eq:systemnormalized}.
\end{theorem}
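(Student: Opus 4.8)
The plan is to run the proof of \Cref{thm:ascentnormal} with the particle weights $c_j$ carried through every step, and then to hand the limiting configuration to \Cref{thm:maingen}. Passing to the manifold $(\mathbb{S}^1)^n$ exactly as in \Cref{sec:normalized}, I would set $\varphi(t)=\int_0^t h(s)\,ds$ and define the weighted energy
\[
E(P)=\frac12\sum_{i,j=1}^n c_ic_j\,\varphi(\langle p_i,p_j\rangle),\qquad P=(p_1,\ldots,p_n)\in(\mathbb{S}^1)^n,
\]
which is real-analytic because $\langle p_i,p_j\rangle\in[-1,1]$ and $h$ is analytic near $[-1,1]$. Repeating the computation of \Cref{lemma:riemannian}, the only change being the scalar $c_ic_j$ in front of the $(i,j)$ summand, shows that for the Riemannian metric with $\langle(a_1,\ldots,a_n),(b_1,\ldots,b_n)\rangle_P=\sum_i\alpha_i(P)a_ib_i$ and $\alpha_i(P)\triangleq c_i\,g_i(P)>0$ one has
\[
(\nabla_M)_iE(P)=\frac{c_i}{\alpha_i(P)}\sum_{j=1}^n c_j\,h(\langle p_i,p_j\rangle)\,P_{p_i^\perp}(p_j)=\frac{1}{g_i(P)}\sum_{j=1}^n c_j\,h(\langle p_i,p_j\rangle)\,P_{p_i^\perp}(p_j),
\]
which, under $P_{p_i^\perp}(p_j)=-\sin(x_i-x_j)p_i^\perp$, $\langle p_i,p_j\rangle=\cos(x_i-x_j)$ and $f(x)=\sin(x)h(\cos(x))$, is exactly the right-hand side of \eqref{eq:systemnormalized3}. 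Hence \eqref{eq:systemnormalized3} is gradient ascent of the analytic function $E$ on a compact Riemannian manifold.

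Then, as in the proof of \Cref{thm:ascentnormal}, {\L}ojasiewicz's theorem together with \Cref{lemma:converge} gives that outside a volume-zero set of initial data the trajectory converges to a critical point $P=(p_1,\ldots,p_n)$ whose Hessian (in the above metric) is negative semi-definite. Writing $p_i=(\cos x_i,\sin x_i)$, criticality reads $\sum_j c_j\,h(\langle p_i,p_j\rangle)P_{p_i^\perp}(p_j)=0$, i.e.\ $\sum_j c_j f(x_i-x_j)=0$ for all $i$, which is \eqref{eq:gencondition1}. By \Cref{lemma:hessian} negative semi-definiteness of the Hessian of $E$ at $P$ is metric-independent, so it also holds in the flat metric; running the second-variation computation from the proof of \Cref{thm:ascentnormal} (cf.\ \Cref{remark:negative_semidef}) on the weighted $E$ — perturbing $p(t)=[e^{c'_iBt}p_i]_{i\in[n]}$ with $B$ the $90^\circ$ rotation and arbitrary reals $c'_i$, the factor $c_ic_j$ simply riding along — yields $\sum_{i,j}c_ic_j\,(c'_i-c'_j)^2f'(x_i-x_j)\geq0$ for all reals $c'_i$. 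Taking $c'_i=\mathbf 1\{i\in S\}$ gives the cut-stability bound $\sum_{i\in S,\,j\in S^C}c_ic_j f'(x_i-x_j)\geq0$ for \emph{every} $S\subseteq[n]$.

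It would remain to apply \Cref{thm:maingen} to $P$. The hypothesis $\tau\product{1,|f'''|_+}\leq 4(1+\tau/M)f'(0)$ is assumed (and forces $f'(0)\geq0$, since its left side is nonnegative and $4(1+\tau/M)>0$), with $\tau$ as in \Cref{thm:main}. The constant $M$ from the hypothesis is admissible in \Cref{thm:maingen} because the stable (i.e.\ Hessian negative semi-definite), stationary, non-synchronized configurations of \eqref{eq:systemnormalized3} coincide with those of \eqref{eq:generalsystem3}: stationarity \eqref{eq:gencondition1} is unaffected by the positive factors $g_i$, non-synchronization is a property of the configuration alone, and by \Cref{lemma:hessian} the Hessian of $E$ is negative semi-definite for the metric $\alpha_i=c_ig_i$ (which produces \eqref{eq:systemnormalized3}) iff it is for $\alpha_i=c_i$ (which produces \eqref{eq:generalsystem3}). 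Finally, the inequality $\sum_{i\in S,j\in S^C}c_ic_j f'(x_i-x_j)\geq0$ for all $S$ implies \eqref{eq:gencondition2}: for $S$ with $x_i\equiv\theta$ on $S$, writing $L=\{j:x_j=\theta\}$ and dividing the inequality for $S'=L$ by $\sum_{i\in L}c_i>0$ gives $\sum_{j\notin L}c_j f'(\theta-x_j)\geq0$, to which one adds $f'(0)\sum_{j\in L\setminus S}c_j\geq0$; in fact this all-$S$ form is exactly what the proof of \Cref{thm:maingen} uses. Therefore \Cref{thm:maingen} forces $x_1=\cdots=x_n$ at $P$, and since almost every trajectory converges to such a $P$, global synchronization holds in \eqref{eq:systemnormalized3}.

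The two genuinely non-mechanical points are: first, recognizing that the correct energy must carry the \emph{product} weights $c_ic_j$, which forces the metric rescaling $\alpha_i=c_ig_i$ so that its Riemannian gradient reproduces the single-weight dynamics \eqref{eq:systemnormalized3}; and second, checking that the $c_ic_j$-weighted cut-stability naturally produced by the Hessian still suffices to invoke \Cref{thm:maingen} and that the hypothesis's $M$ is legitimate there (because the relevant set of stable, stationary, non-synchronized configurations is the same for \eqref{eq:systemnormalized3} and \eqref{eq:generalsystem3}). Neither is deep — every gradient and second-variation computation is identical to \Cref{sec:normalized} up to the extra $c_ic_j$ — but the bookkeeping across the two systems, the two metrics, and the weights is where an error is most likely to creep in.
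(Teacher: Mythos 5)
Your proposal is correct and follows essentially the same route as the paper: the weighted energy $E_w$ with $c_ic_j$ factors, the metric $\alpha_i=c_ig_i$ so that the Riemannian gradient reproduces \eqref{eq:systemnormalized4}, convergence via \Cref{lemma:converge}, metric-independence of the Hessian sign via \Cref{lemma:hessian}, the second-variation computation yielding $\sum_{i,j}c_ic_j(c_i'-c_j')^2f'(x_i-x_j)\geq 0$, and then \Cref{thm:maingen}. The only (immaterial) deviation is your derivation of \eqref{eq:gencondition2}, where you pass through the full level set $L$ and add $f'(0)\sum_{j\in L\setminus S}c_j\geq 0$, whereas the paper simply factors $\sum_{i\in S,j\notin S}c_ic_jf'(x_i-x_j)=\bigl(\sum_{i\in S}c_i\bigr)\sum_{j\notin S}c_jf'(\theta-x_j)$ and divides by $\sum_{i\in S}c_i>0$.
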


Similarly to the approach of \Cref{sec:normalized}, we express the dynamical system in terms of points on $(\mathbb{S}^1)^n$. For $i\in [n]$ and $t\geq 0$, we let $p_i(t)=(\cos(x_i(t)), \sin(x_i(t)))$ to obtain the equivalent dynamical system
\begin{equation}\tag{S4'}
\label{eq:systemnormalized4}
\dot{\p}_i(t) = \frac{1}{g_i(\p(t))} \sum_{j=1}^n c_jh(\product{\p_i(t), \p_j(t)}) P_{\p_i(t)^\perp}(\p_j(t))\, \forall i\in [n],
\end{equation}
where $\p(t)=(\p_1(t),\ldots,\p_n(t))\in (\mathbb{S}^1)^n$ for $t\geq 0$.

Suppose $\varphi(x)=\int_0^x h(x) dx$ and let
\[
E_w(x_1,\ldots,x_n) = \frac{1}{2}\sum_{i,j=1}^n c_ic_j\varphi(\product{x_i,x_j}).
\]
Note that this energy function is also considered in \cite{synchronization_nonlinear}. The idea is that the $c_i$ correspond to the weights of the particles. We have the following generalization of \Cref{lemma:riemannian}. The result can be proved using the same approach.

\begin{lemma}
\label{lemma:riemanniangen}
Suppose the Riemannian manifold $M$ over $(\mathbb{S}^1)^n$ has positive-definite inner product $\product{(a_1, \ldots, a_n), (b_1, \ldots, b_n)}_P = \sum_{i=1}^n \alpha_i(P)a_ib_i$ at $P=(p_1,\ldots,p_n) \in(\mathbb{S}^1)^n$. Then,
\[
(\nabla_M)_i E_w(P) = \frac{1}{\alpha_i(P)} \sum_{j=1}^n c_ic_jh(\product{p_i, p_j}) P_{p_i^\perp}(p_j)\,\forall i\in[n].
\]
\end{lemma}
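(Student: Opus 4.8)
The plan is to follow the proof of \Cref{lemma:riemannian} essentially verbatim, carrying along the extra weight factors $c_ic_j$. By definition, the Riemannian gradient $\nabla_M E_w(P)$ is the unique tangent vector satisfying $\product{\nabla_M E_w(P), Y(P)}_P = \frac{d}{dt}\big|_{t=0} E_w(\Phi_Y^t(P))$ for every smooth vector field $Y$ on $(\mathbb{S}^1)^n$, where $\Phi_Y^t$ denotes the flow of $Y$. Both sides are linear in $Y(P)$ and the inner product $\product{\cdot,\cdot}_P$ is diagonal in the product coordinates, so it suffices to verify the identity when $Y$ is supported on a single coordinate, say $Y(P) = (Ap_1, 0, \ldots, 0)$ for a nonzero skew-symmetric $2\times 2$ matrix $A$; the remaining coordinates are treated identically. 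For such $Y$ one has $\Phi_Y^t(P) = (e^{At}p_1, p_2, \ldots, p_n)$.

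Next I would expand $E_w$ along this flow. The terms of $E_w$ with $i,j\neq 1$, as well as the diagonal term $\tfrac12 c_1^2\varphi(\product{e^{At}p_1, e^{At}p_1}) = \tfrac12 c_1^2\varphi(1)$, are all independent of $t$, while the two families of cross terms $\{i=1,\, j\neq 1\}$ and $\{i\neq 1,\, j=1\}$ coincide (the inner product is symmetric and $c_ic_j=c_jc_i$) and combine to $\sum_{j\neq 1} c_1c_j\,\varphi(\product{e^{At}p_1, p_j})$. Differentiating at $t=0$ with $\varphi'=h$ and $\frac{d}{dt}\big|_{t=0}e^{At}p_1 = Ap_1$ yields
\[
\frac{d}{dt}\bigg|_{t=0} E_w(\Phi_Y^t(P)) = \sum_{j\neq 1} c_1c_j\,h(\product{p_1,p_j})\,\product{Ap_1, p_j} = c_1\sum_{j=1}^n c_j\,h(\product{p_1,p_j})\,\product{Ap_1, p_j},
\]
where the last step adds back the $j=1$ term, which vanishes since $\product{Ap_1, p_1} = p_1^{\top}Ap_1 = 0$ for skew-symmetric $A$. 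On the other hand, inserting the claimed formula for $\nabla_M E_w(P)$ gives $\product{\nabla_M E_w(P), Y(P)}_P = \alpha_1(P)\,\product{Ap_1, (\nabla_M E_w(P))_1} = c_1\sum_{j=1}^n c_j\,h(\product{p_1,p_j})\,\product{Ap_1, P_{p_1^\perp}(p_j)}$, and since $P_{p_1^\perp}(p_j) = p_j - \product{p_j,p_1}p_1$ together with $\product{Ap_1, p_1}=0$ shows $\product{Ap_1, P_{p_1^\perp}(p_j)} = \product{Ap_1, p_j}$, the two expressions agree.

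The computation is routine; the only points requiring a moment of care are the bookkeeping that the diagonal $i=j$ contributions are constant along the flow and that the two symmetric families of off-diagonal terms exactly absorb the $\tfrac12$ in the definition of $E_w$. As in \Cref{lemma:riemannian}, the single load-bearing identity is $p_1^{\top}Ap_1 = 0$, which lets one drop the projection $P_{p_1^\perp}$ inside the pairing with $Ap_1$; this is also the only place skew-symmetry of $A$ enters. Since the reduction to single-coordinate vector fields does not interact with the weights at all, no new difficulty arises relative to the unweighted case, so I do not expect any substantive obstacle.
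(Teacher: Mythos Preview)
Your proposal is correct and follows exactly the approach the paper intends: the paper's own proof of this lemma consists of the single sentence ``The result can be proved using the same approach,'' referring back to \Cref{lemma:riemannian}, and your write-up faithfully reproduces that computation with the weights $c_ic_j$ inserted. The bookkeeping you highlight (constancy of the diagonal term, the $\tfrac12$ absorbed by the two symmetric off-diagonal families, and the identity $p_1^{\top}Ap_1=0$) is precisely what is needed, and nothing more.
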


\begin{proof}[Proof of \Cref{thm:ascentnormalgen}] The same proof as the proof of \Cref{thm:ascentnormal} can be used. The only differences are as follows. The Riemannian manifold $R$ over $(\mathbb{S}^1)^n$ has positive-definite inner product $\product{(a_1,\ldots,a_n),(b_1,\ldots,b_n)}_P = \sum_{i=1}^n c_ig_i(P)a_ib_i$ at $P$. Of course, \pref{eq:systemnormalized4} replaces \pref{eq:systemnormalized2}, and we implement the remaining analogous replacements; for example, we replace \pref{eq:condition1} and \pref{eq:condition2} with \pref{eq:gencondition1} and \pref{eq:gencondition2}, respectively, as well as \Cref{thm:main} with \Cref{thm:maingen}. 

When verifying that \pref{eq:gencondition2} is true, the final expression we obtain is that for $S\subset [n]$ such that the $x_i$ are all equal to $\theta$ for $i\in S$, 
\[
\sum_{i\in S, j\notin S} c_ic_j f'(x_i-x_j) \geq 0 \Leftrightarrow \left(\sum_{i\in S} c_i\right)\sum_{j\notin S} c_j f'(\theta-x_j) \geq 0.
\]
Note that \pref{eq:gencondition2} is clearly true when $S$ is empty. If $S$ is nonempty, since the $c_i$ are positive we have that $\sum_{j\notin S} c_jf'(\theta-x_j) \geq 0$, so \pref{eq:gencondition2} holds.
\end{proof}

An important implication of \Cref{thm:ascentnormalgen} is the following result, which allows the setting of $A$ as $w_1w_2^{\top}$ in \pref{eq:generalsystem} while still having global synchronization.

\begin{corollary}
\label{cor:generalweights}
Assume that $f$ satisfies the conditions of \Cref{thm:ascentnormalgen}, where \pref{eq:systemnormalized3} is replaced by the system
\[
\dot{x}_i(t) = -\sum_{j=1}^n w_{1i}w_{2j}f(x_i(t)-x_j(t)), \, 1\leq i\leq n,
\]
where $w_{1i},w_{2i}>0$ for $1\leq i\leq n$ are fixed. Then, global synchronization occurs in this system.
\end{corollary}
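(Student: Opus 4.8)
The plan is to recognize the dynamics in the statement as a particular instance of the normalized weighted system~\eqref{eq:systemnormalized3} and then invoke \Cref{thm:ascentnormalgen} directly. Since each $w_{1i}>0$ is a fixed positive scalar, we may factor it out of the $i$-th equation and write
\[
-\sum_{j=1}^n w_{1i}w_{2j}f(x_i-x_j) \;=\; -\frac{1}{g_i}\sum_{j=1}^n c_j f(x_i-x_j),\qquad g_i\triangleq \frac{1}{w_{1i}},\quad c_j\triangleq w_{2j}.
\]
Here each $c_j>0$, and each $g_i$ is a positive constant, so $g\colon\mathbb{T}^n\to\mathbb{R}_{>0}^n$ is smooth; the system therefore has exactly the form~\eqref{eq:systemnormalized3}, and $f(x)=\sin(x)h(\cos x)$ with $h$ real-analytic by hypothesis.

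It remains only to supply the constant $M$ demanded by \Cref{thm:ascentnormalgen}. Because every gap obeys $\Delta_j\le 2\pi$, the choice $M=2\pi$ is always admissible, and with it the hypothesis $\tau\product{1,|f'''|_+}\le 4\bigl(1+\frac{\tau}{M}\bigr)f'(0)$ reads $\tau\product{1,|f'''|_+}\le 4\bigl(1+\frac{\tau}{2\pi}\bigr)f'(0)$, which is precisely what ``$f$ satisfies the conditions of \Cref{thm:ascentnormalgen}'' provides. \Cref{thm:ascentnormalgen} then delivers global synchronization, finishing the proof. All of the substantive machinery is already packaged inside \Cref{thm:ascentnormalgen}: {\L}ojasiewicz's theorem and the center-stable manifold theorem (via \Cref{lemma:converge}), the Riemannian reformulation of the weighted gradient ascent (\Cref{lemma:riemanniangen}), and the metric-independence of the Hessian signature (\Cref{lemma:hessian}).

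The argument is essentially a one-line reduction, so there is no genuine obstacle; the single point worth a moment's attention is that the constant $M$ in \Cref{thm:ascentnormalgen} is attached to the \emph{specific} system under study, since changing the weights $w_{1i},w_{2j}$ changes the set of stable stationary non-synchronized configurations. Consequently one cannot, in general, reuse a sharper ``half-circle'' bound $M<\pi$ established for unweighted self-attention dynamics; re-deriving the corresponding estimate $\tau_{\text{max}}(\mathbf{x})<\pi$ for the weighted system would be the only additional work if that improvement were desired. Taking $M=2\pi$ avoids this at no cost beyond the inequality already assumed.
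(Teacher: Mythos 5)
Your proof is correct and is essentially identical to the paper's: the paper likewise sets $g_i=w_{1i}^{-1}$ and $c_i=w_{2i}$ and invokes \Cref{thm:ascentnormalgen} in one line. Your added remarks about the admissibility of $M=2\pi$ and the system-dependence of $M$ are accurate but not needed beyond what the cited theorem already packages.
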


\begin{proof}
This follows from \Cref{thm:ascentnormalgen} with $g_i=w_{1i}^{-1}$ and $c_i=w_{2i}$ for $1\leq i\leq n$.
\end{proof}

\bibliography{references.bib}

\appendix

\section{Results for \texorpdfstring{$\beta > \frac{1}{3}$}{}}

In this subsection, $f(x)=\sin(x)e^{\beta(\cos(x)-1)}$.

\begin{lemma}
\label{lemma:positiveregion}
Suppose $\beta> \frac{1}{3}$. There exists $0<b<a<\pi$ such that the positive region of $f'''(x)$ is $(-a, -b) \sqcup (b, a)$.
\end{lemma}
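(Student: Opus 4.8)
The plan is to analyze the sign of $f'''(x) = -(\cos x + 3\beta\cos^2 x - 4\beta\sin^2 x - 6\beta^2\cos x\sin^2 x + \beta^3\sin^4 x)e^{\beta(\cos x - 1)}$ on $[0,\pi]$ and exhibit exactly one sign change from positive to negative (so that the positive region is one interval $(b,a)$ with $0<b<a<\pi$); the claim on $(-\pi,\pi]$ then follows since $f'''$ is even. Writing $c = \cos x \in [-1,1]$ and $\sin^2 x = 1-c^2$, the sign of $f'''(x)$ is governed by the polynomial
\[
Q(c) \triangleq c + 3\beta c^2 - 4\beta(1-c^2) - 6\beta^2 c(1-c^2) + \beta^3(1-c^2)^2,
\]
and $f'''(x) < 0 \iff Q(\cos x) > 0$. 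So first I would reduce the problem to understanding the sign pattern of the quartic $Q$ on $[-1,1]$, keeping in mind that $x \mapsto \cos x$ is a decreasing bijection $[0,\pi] \to [-1,1]$.

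Next I would pin down the values and signs of $Q$ at the endpoints and use monotonicity/convexity-type estimates in between. At $c=1$ (i.e. $x=0$): $Q(1) = 1 + 3\beta > 0$ for $\beta > \tfrac13$; hence $f'''(0) < 0$, consistent with $0 \notin (b,a)$, i.e. $b > 0$. At $c=-1$ (i.e. $x=\pi$): $Q(-1) = -1 + 3\beta > 0$ for $\beta > \tfrac13$; hence $f'''(\pi) < 0$, so $a < \pi$. Thus $Q$ is positive at both endpoints of $[-1,1]$ and, since $f'''$ is claimed to be positive precisely on $(b,a)$ with $\cos a, \cos b$ interior, $Q$ must be negative on a single subinterval $(\cos a, \cos b) \subset (-1,1)$ and positive on its complement within $[-1,1]$. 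So the real content is: \emph{$Q$ has exactly two roots in $(-1,1)$, and between them $Q<0$.} Equivalently, $-Q$ has a single ``bump'' above zero strictly inside $[-1,1]$.

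To establish this I would study $Q'(c) = 1 + 6\beta c + 12\beta^2 c^2 + \ldots$ — more cleanly, differentiate $Q$ and show $Q'$ has a controlled number of sign changes on $[-1,1]$ (at most two, so $Q$ has at most two interior local extrema, hence at most three monotone pieces; combined with $Q(\pm 1)>0$ and the existence of at least one point where $Q<0$, this forces exactly the claimed pattern). The existence of a point with $Q<0$ is easy: at $c=0$, $Q(0) = -4\beta + \beta^3 = \beta(\beta^2 - 4) < 0$ for $0 < \beta < 2$, and for $\beta \ge 2$ one can evaluate $Q$ at a different interior point (e.g. near $c$ where the $-6\beta^2 c(1-c^2)$ term dominates, taking $c$ a small positive constant) to again get a negative value; I would handle the large-$\beta$ range by a crude bound showing $Q(\varepsilon) < 0$ for a suitable fixed small $\varepsilon>0$. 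The main obstacle is the bookkeeping for $Q'$: a priori a quartic's derivative is a cubic with up to three real roots, which would allow $Q$ up to four monotone pieces and a more complicated sign pattern; I expect that on $[-1,1]$, using $\beta > \tfrac13$, one can show $Q'$ changes sign at most twice (for instance by checking $Q''$ or by a discriminant/Sturm-type argument, or by splitting $[-1,1]$ into a few subintervals on which $Q'$ is manifestly monotone or single-signed). Pushing that sign-count through uniformly in $\beta > \tfrac13$ is the delicate part; everything else is endpoint evaluation and elementary calculus.
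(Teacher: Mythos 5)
Your reduction to the sign of the quartic $Q(\cos x)$ (the paper works with $p=-Q$), the endpoint evaluations $Q(\pm 1)>0$ for $\beta>\tfrac13$, and the appeal to evenness of $f'''$ all match the paper. But the decisive claim --- that $Q$ has \emph{exactly} two roots in $(-1,1)$, so that the sign pattern is negative/positive/negative --- is precisely what you leave unproven: your plan is to bound the number of sign changes of $Q'$ on $[-1,1]$ uniformly in $\beta>\tfrac13$ via $Q''$ or a Sturm/discriminant argument, and you yourself flag this as ``the delicate part'' without carrying it out. The paper bypasses that analysis entirely by locating the other two roots of the quartic \emph{outside} $[-1,1]$: since $p(-1)=1-3\beta<0$, $p(-1-\tfrac{2}{\beta})=5\beta+6\beta^{-1}+13>0$, and $p(z)\to-\infty$ as $z\to-\infty$, the quartic already has two roots in $(-\infty,-1)$; together with the two sign changes inside $[-1,1]$ (negative at $\pm1$, positive at an interior point) this exhausts all four roots of a degree-four polynomial, so no further sign change inside $[-1,1]$ can occur. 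That single root-counting observation replaces your entire $Q'$ bookkeeping, and without it (or the counting argument) your proof is incomplete.

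There is also a concrete error in your existence-of-negativity step for large $\beta$. At any \emph{fixed} $c=\varepsilon\in(-1,1)$, the term $\beta^3(1-\varepsilon^2)^2$ dominates as $\beta\to\infty$, so $Q(\varepsilon)\to+\infty$; the set where $Q<0$ shrinks toward $c=1$ at scale $1/\beta$ (for $-6\beta^2c(1-c^2)$ to beat $\beta^3(1-c^2)^2$ one needs $1-c^2=O(1/\beta)$). Hence no fixed small $\varepsilon$ works for all $\beta\ge 2$; the evaluation point must depend on $\beta$. The paper takes $c=1-\tfrac{0.3}{\beta}$, for which $p\bigl(1-\tfrac{0.3}{\beta}\bigr)=1.688-0.1761\beta^{-1}+0.24\beta>0$ for all $\beta>\tfrac13$, which both covers your $\beta<2$ case in one stroke and is needed anyway to place one root of $p$ in each of $\bigl(-1,\,1-\tfrac{0.3}{\beta}\bigr)$ and $\bigl(1-\tfrac{0.3}{\beta},\,1\bigr)$.
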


\begin{proof}
Let
\[
p(z) = -(z + 3\beta z^2 - 4\beta(1-z^2) - 6\beta^2 z(1-z^2) + \beta^3 (1-z^2)^2).
\]
Observe that
\[
f'''(x) = p(\cos(x))e^{\beta(\cos(x)-1)},
\]
so it suffices to analyze the positive regions of $p$ over $[-1, 1]$. First, observe that
\begin{align*}
& p(1) = -1 - 3\beta < 0,\, p(1-\frac{0.3}{\beta}) = 1.688 - 0.1761\beta^{-1} + 0.24\beta> 0, \\
& p(-1) = 1 - 3\beta < 0, \, p(-1-\frac{2}{\beta}) = 5\beta + 6\beta^{-1} + 13>0.
\end{align*}
Since $p(-1) < 0$ and $p(-1-\frac{2}{\beta}) > 0$, $p$ has two roots less than $-1$. Furthermore, since $p(1)<0$, $p(1-\frac{0.3}{\beta})>0$, and $p(-1)<0$, $p$ has two roots in $[-1, 1]$, and $p$ is positive between these two roots. This finishes the proof.
\end{proof}

\begin{lemma}
\label{lemma:tau1}
$\sqrt{x}\arccos(\frac{\sqrt{1+4x^2}-1}{2x})$ is strictly increasing over $(0, \infty)$.
\end{lemma}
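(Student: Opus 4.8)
The plan is to reduce the claim to a clean monotonicity problem in a single variable and then verify it by differentiation. Writing $g(x) = \sqrt{x}\,\arccos\!\left(\frac{\sqrt{1+4x^2}-1}{2x}\right)$, the first step is to substitute $x = \tan\theta$ for... wait, actually a cleaner substitution is suggested by the structure: note that if $\tau = \arccos\!\left(\frac{\sqrt{1+4x^2}-1}{2x}\right)$, then $\cos\tau$ satisfies $2x\cos\tau = \sqrt{1+4x^2}-1$, i.e. $(2x\cos\tau + 1)^2 = 1 + 4x^2$, which simplifies to $x\cos^2\tau + \cos\tau - x = 0$, equivalently $x(1-\cos^2\tau) = \cos\tau$, so $x = \frac{\cos\tau}{\sin^2\tau}$. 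Thus as $x$ ranges over $(0,\infty)$, the angle $\tau$ ranges over $(0,\pi/2)$, and the map $x \mapsto \tau$ is a smooth decreasing bijection (as $x\to 0^+$, $\tau \to \pi/2^-$; as $x\to\infty$, $\tau\to 0^+$). Wait — that means $\tau$ \emph{decreases} in $x$, so $\sqrt{x}$ increasing and $\tau$ decreasing pull in opposite directions, which is exactly why the product requires a genuine argument.

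The second step is to express $g$ purely in terms of $\tau$. From $x = \cos\tau/\sin^2\tau$ we get $\sqrt{x} = \sqrt{\cos\tau}/\sin\tau$, so
\[
g = \frac{\sqrt{\cos\tau}\,\cdot\,\tau}{\sin\tau} =: G(\tau), \qquad \tau\in(0,\tfrac{\pi}{2}).
\]
Since $x\mapsto\tau$ is a strictly decreasing bijection, $g$ is strictly increasing in $x$ if and only if $G$ is strictly \emph{decreasing} in $\tau$ on $(0,\pi/2)$. So the third step is to show $G'(\tau) < 0$ on $(0,\pi/2)$. Differentiating $\log G = \frac12\log\cos\tau + \log\tau - \log\sin\tau$ gives
\[
\frac{G'(\tau)}{G(\tau)} = -\frac{\tan\tau}{2} + \frac{1}{\tau} - \cot\tau,
\]
and since $G(\tau) > 0$, it suffices to prove $H(\tau) := \frac{1}{\tau} - \cot\tau - \frac{\tan\tau}{2} < 0$ on $(0,\pi/2)$. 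One checks $H \to 0$ as $\tau\to 0^+$ (using $\frac1\tau - \cot\tau \to 0$ and $\tan\tau\to 0$) and $H\to -\infty$ as $\tau\to\pi/2^-$; the main work is then to show $H$ has no interior sign change, e.g. by showing $H' < 0$ throughout, which after using $\frac{d}{d\tau}(\frac1\tau-\cot\tau) = -\frac1{\tau^2} + \csc^2\tau = \frac{\sin^2\tau - \tau^2}{\tau^2\sin^2\tau} \ge 0$ wait that is nonnegative — so one instead combines terms. A robust route: multiply through by $\tau\sin\tau\cos\tau > 0$ to reduce to proving $\sin\tau\cos\tau - \tau\cos^2\tau - \tfrac12\tau\sin^2\tau < 0$, i.e. $\tfrac12\sin 2\tau < \tau\cos^2\tau + \tfrac12\tau\sin^2\tau = \tfrac{\tau}{2}(1+\cos^2\tau)$, i.e. $\sin 2\tau < \tau(1+\cos^2\tau) = \tau + \tfrac{\tau}{2}(1+\cos 2\tau)$. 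Setting $u = 2\tau\in(0,\pi)$ this becomes $\sin u < \tfrac{u}{2} + \tfrac{u}{4}(1+\cos u) = \tfrac{3u}{4} + \tfrac{u}{4}\cos u$, i.e. $4\sin u - u\cos u < 3u$ on $(0,\pi)$; since $u\cos u \geq -u$... hmm that gives $4\sin u - u\cos u \le 4\sin u + u$, not obviously $< 3u$. So a slightly sharper elementary bound is needed: define $\psi(u) = 3u - 4\sin u + u\cos u$, check $\psi(0) = 0$, $\psi'(u) = 3 - 4\cos u + \cos u - u\sin u = 3 - 3\cos u - u\sin u$, and $\psi'(0) = 0$, $\psi''(u) = 3\sin u - \sin u - u\cos u = 2\sin u - u\cos u > 0$ on $(0,\pi/2]$ and clearly $>0$ on $(\pi/2,\pi)$ too since there $\sin u > 0$ and $\cos u < 0$; hence $\psi' > 0$, hence $\psi > 0$ on $(0,\pi)$, which is the desired inequality.

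The main obstacle is purely the final elementary trigonometric inequality — everything before it is a clean change of variables. I expect the right presentation is the one just sketched: reparametrize by $\tau = \arccos\!\big(\tfrac{\sqrt{1+4x^2}-1}{2x}\big)$ so that $x = \cos\tau/\sin^2\tau$ and the quantity becomes $\tau\sqrt{\cos\tau}/\sin\tau$, then reduce monotonicity to $3u - 4\sin u + u\cos u > 0$ on $(0,\pi)$ via two successive derivatives. The only place to be careful is tracking the reversal of monotonicity under the substitution (the statement is that $g$ is increasing in $x$ precisely because $G(\tau)$ is decreasing in $\tau$ and $\tau$ is decreasing in $x$), and confirming the endpoint behaviors of the auxiliary functions so that "derivative has constant sign" actually yields the strict inequality.
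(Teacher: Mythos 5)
Your argument is correct, and it takes a genuinely different route from the paper. The paper differentiates $\sqrt{x}\arccos\bigl(\frac{\sqrt{1+4x^2}-1}{2x}\bigr)$ directly in $x$, reduces positivity of the derivative to $\frac{\sqrt{1+4x^2}-1}{2x} < \cos\bigl(\frac{\sqrt{\sqrt{1+4x^2}-1}}{\sqrt{2x^2+1/2}}\bigr)$, and then applies the bound $\cos z \ge 1 - z^2/2$ to land on a chain of algebraic equivalences in $x$ involving nested radicals. You instead invert the relation $\cos\tau = \frac{\sqrt{1+4x^2}-1}{2x}$ to get $x=\cos\tau/\sin^2\tau$ (exactly the defining equation $\cos\tau=\beta\sin^2\tau$ of the paper's $\tau(\beta)$), note that $x\mapsto\tau$ is a decreasing bijection onto $(0,\pi/2)$, and recast the claim as the strict decrease of $G(\tau)=\tau\sqrt{\cos\tau}/\sin\tau$, which via logarithmic differentiation becomes the elementary inequality $4\sin u - u\cos u < 3u$ on $(0,\pi)$, settled by two rounds of differentiation of $\psi(u)=3u-4\sin u+u\cos u$ with $\psi(0)=\psi'(0)=0$ and $\psi''>0$. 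Your computations check out: $x=\cos\tau/\sin^2\tau$, the sign reversal under the substitution, $G'/G=\frac1\tau-\cot\tau-\frac{\tan\tau}{2}$, the clearing of denominators by $\tau\sin\tau\cos\tau>0$, and the $\psi$ chain are all right, and strictness propagates correctly from $\psi''>0$. What your route buys is avoiding the nested radicals and the $\cos z\ge 1-z^2/2$ trick, at the cost of the inverse-function bookkeeping; it also gives a cleaner conceptual reading of the lemma ($\sqrt{\beta}\,\tau(\beta)$ increasing $\Leftrightarrow$ $\tau\sqrt{\cos\tau}/\sin\tau$ decreasing). Two small cleanups for a final write-up: justify $2\sin u - u\cos u>0$ on $(0,\pi/2]$ explicitly (e.g.\ from $\tan u>u$, so $2\sin u - u\cos u \ge \sin u>0$), and delete the exploratory detours (the abandoned $H'<0$ attempt, the failed bound $u\cos u\ge -u$, and the endpoint limits of $H$, which your final argument does not use).
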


\begin{proof}
We compute that
\[
\frac{d}{dx} \sqrt{x}\arccos(\frac{\sqrt{1+4x^2}-1}{2x}) = \frac{\arccos(\frac{\sqrt{1+4x^2}-1}{2x})-\frac{\sqrt{\sqrt{1+4x^2}-1}}{\sqrt{2x^2+\frac{1}{2}}}}{2\sqrt{x}}.
\]
Therefore, it suffices to prove that 
\[
\arccos(\frac{\sqrt{1+4x^2}-1}{2x}) >\frac{\sqrt{\sqrt{1+4x^2}-1}}{\sqrt{2x^2+\frac{1}{2}}} \Leftrightarrow\frac{\sqrt{1+4x^2}-1}{2x} < \cos(\frac{\sqrt{\sqrt{1+4x^2}-1}}{\sqrt{2x^2+\frac{1}{2}}}) .
\]
Observe that using $\cos(z)\geq 1-\frac{z^2}{2}$ over $(0, \pi)$ yields
\[
\cos(\frac{\sqrt{\sqrt{1+4x^2}-1}}{\sqrt{2x^2+\frac{1}{2}}}) \geq 1 - \frac{\sqrt{1+4x^2}-1}{1 + 4x^2}.
\]
Then, it suffices to prove that 
\begin{align*}
& \frac{\sqrt{1+4x^2}-1}{2x} < 1 - \frac{\sqrt{1+4x^2}-1}{1 + 4x^2}
\\ \Leftrightarrow & \sqrt{1+4x^2}(\frac{1}{2x} + \frac{1}{1+4x^2}) < 1 + \frac{1}{1+4x^2} + \frac{1}{2x} \\
\Leftrightarrow & \sqrt{1+4x^2}(1+2x+4x^2) < (1+4x^2)2x + 1+2x+4x^2 \\
\Leftrightarrow & (\sqrt{1+4x^2} - 1)(1+2x+4x^2) < (1+4x^2)2x \\
\Leftrightarrow & 2x(1+2x+4x^2) < (1+\sqrt{1+4x^2})(1+4x^2),
\end{align*}
which is straightforward to verify.
\end{proof}

The following corollary also appears in \cite[Lemma 4]{clustering_causal}.

\begin{corollary}
\label{corollary:tau2} $\arccos(\frac{\sqrt{1+4x^2}-1}{2x}) < \frac{1}{\sqrt{x}}$ over $(0, \infty)$.
\end{corollary}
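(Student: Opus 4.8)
The plan is to combine the monotonicity from \Cref{lemma:tau1} with a computation of the behaviour at infinity. Set
\[
g(x) \triangleq \sqrt{x}\,\arccos\left(\frac{\sqrt{1+4x^2}-1}{2x}\right),
\]
so that the corollary is precisely the assertion $g(x) < 1$ for all $x \in (0,\infty)$. By \Cref{lemma:tau1}, $g$ is strictly increasing on $(0,\infty)$; since a strictly increasing function never attains its supremum, and the supremum of an increasing function on $(0,\infty)$ is its limit at $+\infty$, we get $g(x) < \lim_{y\to\infty} g(y)$ for every $x>0$. It therefore suffices to prove that $\lim_{x\to\infty} g(x) = 1$ (indeed, any bound $\leq 1$ would do).

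To evaluate the limit, put $u(x) \triangleq \frac{\sqrt{1+4x^2}-1}{2x}$. Rationalizing $2x - \sqrt{1+4x^2} = -\bigl(2x+\sqrt{1+4x^2}\bigr)^{-1}$ gives the identity
\[
1 - u(x) = \frac{1}{2x}\left(1 - \frac{1}{2x+\sqrt{1+4x^2}}\right),
\]
so that $u(x) \to 1^-$ and $1 - u(x) \sim \frac{1}{2x}$ as $x\to\infty$. Writing $\theta(x) \triangleq \arccos(u(x)) \in (0,\frac{\pi}{2})$, we have $\theta(x) \to 0^+$ and $1 - \cos\theta(x) = 1 - u(x)$, and the elementary asymptotic $1 - \cos\theta \sim \frac{\theta^2}{2}$ then yields $\theta(x)^2 \sim \frac{1}{x}$. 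Hence $g(x)^2 = x\,\theta(x)^2 \to 1$, and since $g > 0$ this gives $g(x) \to 1$, as required.

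I do not expect a genuine obstacle here: once \Cref{lemma:tau1} is in hand the argument is routine, and the only points needing a little care are that the supremum is approached but not attained (so that the inequality is strict, as stated) and that the limit is pinned to exactly $1$ rather than something larger — which is precisely what the asymptotics $1-u(x)\sim\frac{1}{2x}$ and $1-\cos\theta\sim\frac{\theta^2}{2}$ deliver. If one prefers to sidestep \Cref{lemma:tau1}, an alternative is to observe that, by monotonicity of $\arccos$, the claim is equivalent to $u(x) > \cos(1/\sqrt{x})$; bounding $\cos z \leq 1 - \frac{z^2}{2} + \frac{z^4}{24}$ and using the identity above for $1-u(x)$ reduces this to $96x^2 > 1$, which handles all $x \geq 1/\sqrt{96}$, while for $x \leq 1/\sqrt{96}$ one just uses $\arccos(u(x)) < \frac{\pi}{2} < 1/\sqrt{x}$. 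I would present the first route, since \Cref{lemma:tau1} has just been established.
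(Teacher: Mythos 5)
Your proposal is correct and follows essentially the same route as the paper: \Cref{lemma:tau1} gives strict monotonicity, and the conclusion follows from $\lim_{x\to\infty}\sqrt{x}\arccos\bigl(\frac{\sqrt{1+4x^2}-1}{2x}\bigr)=1$, which the paper simply asserts and you verify via the asymptotics $1-u(x)\sim\frac{1}{2x}$ and $1-\cos\theta\sim\frac{\theta^2}{2}$. That verification is a welcome (and correct) elaboration, not a different argument.
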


\begin{proof}
This follows from \Cref{lemma:tau1} and $\lim_{x\rightarrow\infty} \sqrt{x}\arccos(\frac{\sqrt{1+4x^2}-1}{2x}) = 1$.
\end{proof}

\begin{lemma}
\label{lemma:beta1}
If $\beta \geq 1$ then $\tau(f''(a)-f''(b)) < 2$.
\end{lemma}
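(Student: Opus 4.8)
The plan is to reduce the inequality to a bound of the form $f''(a)+\lvert f''(b)\rvert<2\sqrt{\beta}$, and then to prove that bound by pinning down the two roots of $p$ in $(-1,1)$ precisely enough; here $p(z)=-\bigl(z+3\beta z^2-4\beta(1-z^2)-6\beta^2 z(1-z^2)+\beta^3(1-z^2)^2\bigr)$ is as in \Cref{lemma:positiveregion}, so that $f'''(x)=p(\cos x)e^{\beta(\cos x-1)}$ and $z_1=\cos a<z_2=\cos b$ are its two roots in $(-1,1)$. I would begin by recording the sign structure: since $f''(0)=f''(\pi)=0$ and, by \Cref{lemma:positiveregion}, $f'''>0$ on $(b,a)$ while $f'''<0$ on $(0,b)\cup(a,\pi)$, the function $f''$ decreases on $(0,b)$, increases on $(b,a)$, and decreases on $(a,\pi)$; hence $f''(b)<0<f''(a)$, and in fact $f''(a)=\max_{[0,\pi]}f''$ and $f''(b)=\min_{[0,\pi]}f''$. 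Thus the claim is equivalent to $\tau\bigl(f''(a)+\lvert f''(b)\rvert\bigr)<2$, and since $\tau<1/\sqrt{\beta}$ by \Cref{corollary:tau2} (with $x=\beta$), it suffices to show $f''(a)+\lvert f''(b)\rvert<2\sqrt{\beta}$ for every $\beta\ge1$.

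For the two terms I would use $f''(x)=\sin x\,q(\cos x)\,e^{\beta(\cos x-1)}$ with $q(c)=\beta^2(1-c^2)-3\beta c-1$. The dominant contribution is $\lvert f''(b)\rvert$: I would first show $z_2$ is close to $1$ by evaluating $p$ at well-chosen points of the form $1-c/\beta$ (one giving $p>0$, the other $p<0$, using that inside $(-1,1)$ the set $\{p>0\}$ is exactly $(z_1,z_2)$), which brackets $1-z_2$ inside $[\,\underline{c}/\beta,\ \overline{c}/\beta\,]$ for explicit constants. Then $\sin^2 b=1-z_2^2\le 2(1-z_2)\le 2\overline{c}/\beta$ makes $\sin b=O(1/\sqrt\beta)$, the same bracket controls $-q(z_2)=1+3\beta z_2-\beta^2(1-z_2^2)$ from above (the term $-\beta^2(1-z_2^2)$ helps, since $1-z_2^2\ge \underline{c}/\beta$), and $e^{\beta(z_2-1)}\le e^{-\underline{c}}$; multiplying these gives $\lvert f''(b)\rvert\le c_b\sqrt\beta$. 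One treats $f''(a)$ similarly, after locating $z_1$ — which migrates from $\approx-0.49$ at $\beta=1$ to $1-\Theta(\beta^{-1})$ as $\beta\to\infty$ — the point being that $f''(a)$ is genuinely smaller (asymptotically $\approx 0.38\sqrt\beta$ versus $\approx 1.4\sqrt\beta$ for $\lvert f''(b)\rvert$), so the constants can be arranged to give $c_a+c_b<2$.

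The main obstacle is sharpness: the available slack is small (at $\beta=1$ the left-hand side is already about $1.6$, and for large $\beta$ it tends to roughly $1.8$), so the root brackets for $z_1,z_2$ and the consequent bounds on $f''(a),f''(b)$ must be made tight and uniform over all of $\beta\ge1$. In practice this likely forces one to keep the subleading $O(1)$ terms (relative to $O(\beta)$) when expanding $p(1-c/\beta)$ and $q$, and possibly to split $[1,\infty)$ into a compact piece handled by explicit monotone estimates and a tail $\beta\to\infty$ handled asymptotically — the wandering of $z_1$ across the range in particular means the estimate for $f''(a)$ must be organised to cover both its near-$1$ and bounded-away-from-$1$ behaviours.
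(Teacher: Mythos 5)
Your overall strategy is the same as the paper's: observe $f''(b)<0<f''(a)$, normalize by $\sqrt{\beta}$, control $\sqrt{\beta}\,\tau$ via \Cref{lemma:tau1}/\Cref{corollary:tau2}, bracket the locations of $a$ and $b$ by sign evaluations, and bound the two extreme values of $f''$ by explicit constants whose sum beats the threshold. The paper carries this out after the substitution $z=\beta(1-\cos x)$, writing $f''(x)=\sqrt{\beta}\,g_\beta(z)$ and splitting $g_\beta$ into $g^1+\beta^{-1}g^2$, whereas you propose to stay in the variable $c=\cos x$ and bracket the roots of $p$ directly; that difference is cosmetic rather than substantive.

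The genuine gap is that the decisive quantitative step is only asserted, and the estimates you sketch are too lossy to deliver it. You need explicit $c_a,c_b$ with $c_a+c_b<2$ uniformly over $\beta\ge 1$, but bounding $-q(z_2)=1+3\beta z_2-\beta^2(1-z_2^2)$ using only $1-z_2^2\ge\underline{c}/\beta$ (rather than $1-z_2^2=(1-z_2)(1+z_2)\approx 2(1-z_2)$) inflates the dominant constant from roughly $\sqrt{2\underline{c}}\,(3-2\underline{c})e^{-\underline{c}}\approx 1.38$ to $\approx 1.54$ already in the large-$\beta$ limit, and the subleading terms you would have to retain at moderate $\beta$ erode most of the remaining margin. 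Moreover, relying solely on $\tau<1/\sqrt{\beta}$ discards the extra room that is needed exactly where the inequality is tightest: the paper's own constants sum to slightly more than $2$ near $\beta=1$, and it rescues the range $\beta\in[1,2)$ with the sharper bound $\sqrt{\beta}\,\tau\le\sqrt{2}\,\tau(2)<0.96$ from \Cref{lemma:tau1}. So to complete your plan you must either tighten the root brackets well beyond what you describe (uniformly in $\beta$, including the regime where $z_1=\cos a$ migrates toward $1$), or incorporate a sharpened bound on $\sqrt{\beta}\,\tau$ on a compact range of $\beta$ as the paper does; as written, the proposal is a plausible programme rather than a proof.
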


\begin{proof}
Let $w=1-\cos(x)$ and assume that $x\in [0, \pi]$ so that $\sin(x) \geq 0$. Then,
\begin{align*}
f''(x) & = \sqrt{2w-w^2}(-1 - 3\beta (1-w) + \beta^2 (2w-w^2))e^{-\beta w} \\
& = \sqrt{2w-w^2}(\beta(-3 + 2\beta w) + (-1+3\beta w - \beta^2 w^2))e^{-\beta w}.
\end{align*}
Let $z=\beta w$. Hence,
\[
f''(x) = \sqrt{\beta}\sqrt{z}\sqrt{2-\frac{z}{\beta}}(-3+2z + \frac{1}{\beta}(-1+3z-z^2))e^{-z}.
\]
Let 
\[
g_\beta(z) = \sqrt{z}\sqrt{2-\frac{z}{\beta}}(-3+2z + \frac{1}{\beta}(-1+3z-z^2))e^{-z}
\]
where $z\in [0, 2\beta]$. Note that $f''(x)=\sqrt{\beta}g_\beta(\beta-\beta\cos(x))$ so
\[
f'''(x)=\beta\sqrt{\beta}\sin(x)g'_\beta(\beta-\beta\cos(x))
\]
Since $x\in [0, \pi]$ and the positive region of $f'''$ in $[0, \pi]$ is $(b, a)$, the positive region of $g_\beta'$ is $(\beta(1-\cos(b)), \beta(1-\cos(a)))$. Particularly, $\frac{f''(a)-f''(b)}{\sqrt{\beta}} = g_\beta(\beta(1-\cos(a))) - g_\beta(\beta(1-\cos(b)))$. 

First, observe that $g'_\beta(0.18)< 0$ and $g'_\beta(1.4)>0$ for all $\beta \geq 1$. Therefore, because the positive region is continuous, we always have that 
\[
0.18 < \beta(1-\cos(b)) < 1.4 < \beta(1-\cos(a)).
\]
Moreover, $g_\beta(\beta(1-\cos(a))) > 0 > g_\beta(\beta(1-\cos(b)))$; this is because $g_\beta(0)=0$, $g_\beta(0.1)<0$, and $g_\beta(1.9)>0$, so $g_\beta$ first decreases to a negative value and then increases to a positive value.

Let
\[
g^1(z) = \sqrt{2z}(-3+2z)e^{-z},\, g^2(z) = \sqrt{2z}(-1+3z-z^2)e^{-z}.
\]
Then, $g_\beta$ is similar to a linear combination of $g^1$ and $g^2$, with
\[
g_\beta(z) = \frac{\sqrt{2-\frac{z}{\beta}}}{\sqrt{2}} (g^1(z) + \frac{1}{\beta}g^2(z)).
\]
For the following computations, we utilize properties of $g^1$ and $g^2$ which are straightforward to verify.

Because $0.18<\beta(1-\cos(b))<1.4$, the value of $\sqrt{2-\frac{z}{\beta}}g^2(z)$ at $z=\beta(1-\cos(b))$ is at least 
\[
\sqrt{2-\frac{0.18}{\beta}}g^2(0.18)  > -0.247\sqrt{2-\frac{0.18}{\beta}}.
\]
Furthermore, the value of $\sqrt{2-\frac{z}{\beta}}g^1(z)$ at $z=\beta(1-\cos(b))$ is at least
\[
-1.381\sqrt{2-\frac{0.18}{\beta}},
\]
since $g^1 > -1.381$. Therefore, the value of $g_\beta$ at $z=\beta(1-\cos(b))$ is greater than
\begin{equation}
\label{eq:minval}
-0.247\frac{\sqrt{2-\frac{0.18}{\beta}}}{\sqrt{2}\beta} - 1.381 \frac{\sqrt{2-\frac{0.18}{\beta}}}{\sqrt{2}}.
\end{equation}

Since $\beta(1-\cos(a)) > 1.4$, the value of $\sqrt{2-\frac{z}{\beta}}g^2(z)$ at $z=\beta(1-\cos(a))$ is at most
\[
\sqrt{2-\frac{1.4}{\beta}}g^2(1.4) < 0.512 \sqrt{2-\frac{1.4}{\beta}}.
\]
Furthermore, the value of $\sqrt{2-\frac{z}{\beta}}g^1(z)$ at $z=\beta(1-\cos(a))$ is at most 
\[
0.375 \sqrt{2-\frac{1.5}{\beta}},
\]
since $g^1\leq 0$ for $z\leq 1.5$ and $g^1 < 0.375$. Therefore, the value of $g_\beta$ at $z=\beta(1-\cos(a))$ is less than
\[
0.512 \frac{\sqrt{2-\frac{1.4}{\beta}}}{\sqrt{2}\beta} + 0.375 \frac{\sqrt{2-\frac{1.5}{\beta}}}{\sqrt{2}}.
\]
Using this inequality and \pref{eq:minval} gives that
\begin{align*}
& g_\beta(\beta(1-\cos(a))) - g_\beta(\beta(1-\cos(b))) <\\ & 0.512 \frac{\sqrt{2-\frac{1.4}{\beta}}}{\sqrt{2}\beta} + 0.375 \frac{\sqrt{2-\frac{1.5}{\beta}}}{\sqrt{2}} + 0.247\frac{\sqrt{2-\frac{0.18}{\beta}}}{\sqrt{2}\beta} + 1.381 \frac{\sqrt{2-\frac{0.18}{\beta}}}{\sqrt{2}}.
\end{align*}
Let
\[
\varphi(\beta) = 0.512 \frac{\sqrt{2-\frac{1.4}{\beta}}}{\sqrt{2}\beta} + 0.375 \frac{\sqrt{2-\frac{1.5}{\beta}}}{\sqrt{2}} +0.247\frac{\sqrt{2-\frac{0.18}{\beta}}}{\sqrt{2}\beta} + 1.381 \frac{\sqrt{2-\frac{0.18}{\beta}}}{\sqrt{2}}
\]
for $\beta\geq 1$, so that $g_\beta(\beta(1-\cos(a))) - g_\beta(\beta(1-\cos(b)))  < \varphi(\beta)$.

If $\beta\geq 2$ then $\varphi(\beta) < 2$, so 
\[
\tau(f''(a) - f''(b)) = \sqrt{\beta}\tau(g_\beta(\beta(1-\cos(a))) - g_\beta(\beta(1-\cos(b)))) < \sqrt{\beta}\tau \varphi(\beta) < 2\sqrt{\beta}\tau.
\]
Since $\sqrt{\beta}\tau<1$ by \Cref{corollary:tau2}, $\tau (f''(a) - f''(b)) < 2$.

Assume that $\beta \in [1, 2)$. Then, from \Cref{lemma:tau1}, $\sqrt{\beta}\tau < \sqrt{2}\tau(2) < 0.96$, so $\tau (f''(a) - f''(b)) < 0.96 \varphi(\beta) < 2$.
\end{proof}

\begin{lemma}
\label{lemma:beta_0.75}
Suppose $\beta\in [0.75, 1)$. Then, $\tau(f''(a)-f''(b))<2$.
\end{lemma}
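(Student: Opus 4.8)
The plan is to follow the argument of \Cref{lemma:beta1} verbatim in structure, re-choosing the numerical constants for the range $\beta\in[0.75,1)$. Recall that for $x\in[0,\pi]$ and $z=\beta(1-\cos x)\in[0,2\beta]$ one has $f''(x)=\sqrt\beta\,g_\beta(z)$ with
\[
g_\beta(z)=\sqrt z\,\sqrt{2-\tfrac z\beta}\Bigl(-3+2z+\tfrac1\beta(-1+3z-z^2)\Bigr)e^{-z},
\]
and $f'''(x)=\beta^{3/2}\sin(x)\,g_\beta'(\beta-\beta\cos x)$, so that by \Cref{lemma:positiveregion} the positive region $(b,a)$ of $f'''$ on $[0,\pi]$ corresponds to the unique positive subinterval $(z_b,z_a)$ of $g_\beta'$ in $(0,2\beta)$, where $z_b=\beta(1-\cos b)$, $z_a=\beta(1-\cos a)$, and
\[
\tau\bigl(f''(a)-f''(b)\bigr)=\sqrt\beta\,\tau\,\bigl(g_\beta(z_a)-g_\beta(z_b)\bigr).
\]
Since $\sqrt\beta\,\tau$ is strictly increasing in $\beta$ by \Cref{lemma:tau1}, on $[0.75,1)$ we have $\sqrt\beta\,\tau<\arccos\tfrac{\sqrt5-1}{2}<0.905$, so it suffices to prove the $\beta$-uniform bound $g_\beta(z_a)-g_\beta(z_b)<2$.

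First I would localize the positive interval $(z_b,z_a)$ of $g_\beta'$. Using the explicit form of $g_\beta'$ together with $1/\beta\in(1,4/3]$ and the monotonicity of $\sqrt{2-z/\beta}$ in $\beta$, I would exhibit constants $z_1<z_2$ (likely with $z_1$ smaller than the value $0.18$ used for $\beta\ge1$, since $z_b$ moves left as $\beta$ decreases) such that $g_\beta'(z_1)<0<g_\beta'(z_2)$ uniformly over $\beta\in[0.75,1)$, forcing $z_b\in(z_1,z_2)$ and $z_a\in(z_2,2\beta)$. I would also record the sign pattern of $g_\beta$ on $(0,2\beta)$ — it vanishes at $0$ and at $2\beta$, has a negative minimum at $z_b$ and a positive maximum at $z_a$ — so that $g_\beta(z_b)<0<g_\beta(z_a)$; the positivity $g_\beta(z_a)>0$ is certified by evaluating $g_\beta$ at some fixed $z^\star\in(z_2,1.5)$ and using monotonicity on $(z_b,z_a)$.

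Next I would bound the two endpoint values via the $\beta$-free decomposition $g_\beta(z)=\tfrac1{\sqrt2}\sqrt{2-\tfrac z\beta}\bigl(g^1(z)+\tfrac1\beta g^2(z)\bigr)$, with $g^1(z)=\sqrt{2z}(-3+2z)e^{-z}$ and $g^2(z)=\sqrt{2z}(-1+3z-z^2)e^{-z}$. For the upper bound on $g_\beta(z_a)$ I would use $\beta<1$ to replace $\sqrt{2-z_a/\beta}$ by $\sqrt{2-z_a}$ and then maximize $\tfrac1{\sqrt2}\sqrt{2-z}\bigl(g^1(z)+\tfrac43 g^2(z)\bigr)$ over $z\in(z_2,2)$; for the bound on $-g_\beta(z_b)$ I would bound the prefactor by $1$ (or, sharper, by $\tfrac1{\sqrt2}\sqrt{2-z_b/\beta}$ using $\beta\ge0.75$) and maximize $-g^1(z)-\tfrac1\beta g^2(z)$ over $z\in(z_1,z_2)$, exploiting that within $(z_1,z_2)$ one has $g^2<0$ only for $z<\tfrac{3-\sqrt5}{2}$. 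These maxima of $\beta$-free functions are finite and computable; their sum is $<2$, which together with the first paragraph yields $\tau(f''(a)-f''(b))<0.905\cdot2<2$.

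The step I expect to be most delicate is the combination of the $\beta$-uniform localization with the endpoint arithmetic: unlike the $\beta\ge1$ case the coefficient $1/\beta$ is bounded only by $4/3$, the window $(z_2,2\beta)$ for $z_a$ can be as short as $(z_2,1.5)$, and $g_\beta(z_b)$ is about $-1.5$ while $g_\beta(z_a)$ is about $0.2$, so the margin against the target $2$ is only of order $0.3$ and the endpoint bounds must be carried out by maximizing the combined expressions above rather than each factor at its own extremum. If the margin still proves uncomfortable, one can split $[0.75,1)$ into finitely many subintervals and sharpen the constants $z_1,z_2,z^\star$ on each.
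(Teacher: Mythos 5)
Your proposal follows essentially the same route as the paper's proof: the same substitution $z=\beta(1-\cos x)$ with $f''(x)=\sqrt{\beta}\,g_\beta(z)$, the same decomposition $g_\beta=\tfrac{1}{\sqrt2}\sqrt{2-z/\beta}\,\bigl(g^1+\tfrac1\beta g^2\bigr)$, the same uniform localization of the positive interval of $g_\beta'$ (the paper takes $z_1=0.165$, $z_2=1.24$) together with the sign pattern of $g_\beta$, and the same final appeal to \Cref{lemma:tau1} giving $\sqrt{\beta}\,\tau<\tau(1)<0.91$. The only substantive difference is where the numerical slack sits: the paper does not establish your $\beta$-uniform bound $g_\beta(z_a)-g_\beta(z_b)<2$ (its bound $\varphi(\beta)$ in fact slightly exceeds $2$ as $\beta\to 1^-$), but instead retains the $\beta$-dependent prefactors $\sqrt{2-c/\beta}$ in $\varphi(\beta)$ and only needs $0.91\,\varphi(\beta)<2$, which leaves more room than your tighter target—precisely the delicacy you flagged, and which your proposed joint maximization of the combined expressions (or subdividing $[0.75,1)$) would be needed to absorb.
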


\begin{proof}
Observe that $g'_\beta(0.165) < 0$ and $g'_\beta(1.24) > 0$ for $\beta\in [0.75, 1)$. Thus,
\[
0.165 < \beta(1-\cos(b)) < 1.24 < \beta(1-\cos(a)).
\]
Similarly, $g_\beta(\beta(1-\cos(a))) > 0 > g_\beta(\beta(1-\cos(b)))$, because $g_\beta(0)=0$, $g_\beta(0.1)<0$, $g_\beta(1.4)>0$. Therefore,
\begin{align*}
& g_\beta(\beta(1-\cos(a))) - g_\beta(\beta(1-\cos(b))) <\\ & 0.54 \frac{\sqrt{2-\frac{1.24}{\beta}}}{\sqrt{2}\beta} + 0.375 \frac{\sqrt{2-\frac{1.5}{\beta}}}{\sqrt{2}} + 0.26\frac{\sqrt{2-\frac{0.165}{\beta}}}{\sqrt{2}\beta} + 1.381 \frac{\sqrt{2-\frac{0.165}{\beta}}}{\sqrt{2}} =: \varphi(\beta)
\end{align*}
over $[0.75, 1)$. Using \Cref{lemma:tau1}, $\sqrt{\beta}\tau < \tau(1) < 0.91$, so $\tau(f''(a)-f''(b)) < 0.91\varphi(\beta) < 2$.
\end{proof}

\begin{lemma}
\label{lemma:beta_0.5}
Suppose $\beta\in [0.5, 0.75)$. Then, $\tau(f''(a)-f''(b)) < 2$.
\end{lemma}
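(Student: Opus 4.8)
The plan is to mirror, step for step, the arguments of Lemmas~\ref{lemma:beta1} and~\ref{lemma:beta_0.75}, only recalibrating the numerical thresholds to the window $\beta\in[0.5,0.75)$. Recall from the proof of Lemma~\ref{lemma:beta1} that with the substitution $z=\beta(1-\cos x)$ one has $f''(x)=\sqrt{\beta}\,g_\beta(z)$ and $f'''(x)=\beta\sqrt{\beta}\sin(x)\,g_\beta'(z)$ on $[0,\pi]$, so the positive region $(b,a)\subset[0,\pi]$ of $f'''$ corresponds to the positive region $\bigl(\beta(1-\cos b),\,\beta(1-\cos a)\bigr)$ of $g_\beta'$, and
\[
f''(a)-f''(b)=\sqrt{\beta}\bigl(g_\beta(\beta(1-\cos a))-g_\beta(\beta(1-\cos b))\bigr).
\]
Hence it suffices to bound $g_\beta(\beta(1-\cos a))-g_\beta(\beta(1-\cos b))$ from above by an explicit $\varphi(\beta)$ and then multiply by $\sqrt{\beta}\,\tau$.

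First I would localize the two endpoints: choosing fixed points $0<c_1<c_2$ and checking the one-variable inequalities $g_\beta'(c_1)<0$, $g_\beta'(c_2)>0$ for every $\beta\in[0.5,0.75)$, continuity of the positive region of $g_\beta'$ forces $c_1<\beta(1-\cos b)<c_2<\beta(1-\cos a)$. Evaluating $g_\beta$ at $0$, at a small point, and at a point past $c_2$ then gives $g_\beta(\beta(1-\cos b))<0<g_\beta(\beta(1-\cos a))$, exactly as before. Next, using the decomposition $g_\beta(z)=\frac{\sqrt{2-z/\beta}}{\sqrt{2}}\bigl(g^1(z)+\frac1\beta g^2(z)\bigr)$ together with the recorded extremal bounds on $g^1,g^2$ (e.g.\ $g^1>-1.381$, $g^1\le 0$ on $[0,1.5]$, $g^1<0.375$, and the relevant one-sided bounds on $g^2$ at $c_1$ and $c_2$), and plugging in the localization $c_1<\beta(1-\cos b)<c_2<\beta(1-\cos a)$, one obtains a closed form $\varphi(\beta)$ — a sum of terms of shape $\mathrm{const}\cdot\sqrt{2-c/\beta}/\beta$ and $\mathrm{const}\cdot\sqrt{2-c/\beta}$ — with $g_\beta(\beta(1-\cos a))-g_\beta(\beta(1-\cos b))<\varphi(\beta)$ on $[0.5,0.75)$.

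To finish, Lemma~\ref{lemma:tau1} (monotonicity of $\sqrt{x}\,\arccos(\tfrac{\sqrt{1+4x^2}-1}{2x})$) gives $\sqrt{\beta}\,\tau<\sqrt{0.75}\,\arccos\!\bigl(\tfrac{\sqrt{1+4(0.75)^2}-1}{1.5}\bigr)<0.88$ uniformly on $[0.5,0.75)$, so
\[
\tau(f''(a)-f''(b))=\sqrt{\beta}\,\tau\cdot\bigl(g_\beta(\beta(1-\cos a))-g_\beta(\beta(1-\cos b))\bigr)<0.88\,\varphi(\beta)<2,
\]
the last inequality being checked on the compact interval. The main obstacle is purely one of calibration: since $\beta$ is now smaller, the term $\frac1\beta g^2(z)$ is relatively larger and the cruder constants used for $\beta\ge0.75$ may not leave enough slack, so one must pick the localization thresholds $c_1,c_2$ (and the auxiliary evaluation points controlling the sign of $g_\beta$) tightly enough that $0.88\,\varphi(\beta)<2$ holds throughout — splitting $[0.5,0.75)$ into a few subintervals if a single choice of constants does not suffice — while keeping every sign condition on $g_\beta'$ and $g_\beta$ valid uniformly in $\beta$.
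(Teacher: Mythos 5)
Your plan is essentially the paper's own proof: the paper takes your $c_1,c_2$ to be $0.14$ and $0.9$ (checking $g_\beta'(0.14)<0$, $g_\beta'(0.9)>0$ on $[0.5,0.75)$), uses $g_\beta(0)=0$, $g_\beta(0.1)<0$, $g_\beta(0.99)>0$ for the sign localization, drops the positive $g^1$ contribution exactly as you anticipate (since $g^1\le 0$ for $z\le 2\beta\le 1.5$), and closes with $\sqrt{\beta}\,\tau<0.88$ from Lemma~\ref{lemma:tau1} to get $0.88\,\varphi(\beta)<2$ on the single interval $[0.5,0.75)$, no subdivision needed. The only thing missing from your write-up is the explicit numerical calibration, which the paper's choice of constants confirms is achievable in one shot.
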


\begin{proof}
Observe that $g'_\beta(0.14) < 0$ and $g'_\beta(0.9) > 0$ for $\beta\in [0.5, 0.75)$. Thus,
\[
0.14 < \beta(1-\cos(b)) < 0.9 < \beta(1-\cos(a)).
\]
Similarly, $g_\beta(\beta(1-\cos(a))) > 0 > g_\beta(\beta(1-\cos(b)))$, because $g_\beta(0)=0$, $g_\beta(0.1)<0$, $g_\beta(0.99)>0$. Therefore,
\begin{align*}
& g_\beta(\beta(1-\cos(a))) - g_\beta(\beta(1-\cos(b))) <\\ & 0.542 \frac{\sqrt{2-\frac{0.9}{\beta}}}{\sqrt{2}\beta} + 0.28\frac{\sqrt{2-\frac{0.14}{\beta}}}{\sqrt{2}\beta} + 1.381 \frac{\sqrt{2-\frac{0.14}{\beta}}}{\sqrt{2}} =: \varphi(\beta)
\end{align*}
over $[0.5, 0.75)$; observe that we have removed the term $0.375 \frac{\sqrt{2-\frac{1.5}{\beta}}}{\sqrt{2}}$ for $g^1$, since $g^1$ is always non-positive when $\beta \leq 0.75$ and $z\leq 2\beta$. Using \Cref{lemma:tau1}, $\sqrt{\beta}\tau < \sqrt{0.75}\tau(\sqrt{0.75}) < 0.88$, so $\tau(f''(a)-f''(b)) < 0.88\varphi(\beta) < 2$.
\end{proof}

\begin{lemma}
\label{lemma:beta_0.33}
Suppose $\beta\in (\frac{1}{3}, 0.5)$. Then, $\tau(f''(a)-f''(b)) < 2$.
\end{lemma}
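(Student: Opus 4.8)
The plan is to mirror the proofs of Lemmas~\ref{lemma:beta1}--\ref{lemma:beta_0.5}, adjusting only the numerical thresholds to the narrow range $\beta\in(\tfrac13,\tfrac12)$. Recall from the proof of Lemma~\ref{lemma:beta1} the substitution $w=1-\cos x$, $z=\beta w$, under which $f''(x)=\sqrt\beta\,g_\beta(\beta-\beta\cos x)$ and $f'''(x)=\beta\sqrt\beta\,\sin x\,g_\beta'(\beta-\beta\cos x)$, where
\[
g_\beta(z)=\sqrt z\,\sqrt{2-\tfrac z\beta}\,\Bigl(-3+2z+\tfrac1\beta(-1+3z-z^2)\Bigr)e^{-z},\qquad z\in[0,2\beta].
\]
Since the positive region of $f'''$ on $[0,\pi]$ is $(b,a)$, the positive region of $g_\beta'$ on $[0,2\beta]$ is $\bigl(\beta(1-\cos b),\,\beta(1-\cos a)\bigr)$, so that $f''(a)-f''(b)=\sqrt\beta\bigl(g_\beta(\beta(1-\cos a))-g_\beta(\beta(1-\cos b))\bigr)$. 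I would also keep the decomposition $g_\beta(z)=\tfrac1{\sqrt2}\sqrt{2-z/\beta}\,\bigl(g^1(z)+\tfrac1\beta g^2(z)\bigr)$ with $g^1(z)=\sqrt{2z}(-3+2z)e^{-z}$ and $g^2(z)=\sqrt{2z}(-1+3z-z^2)e^{-z}$; since $z\le 2\beta<1<\tfrac32$ over the whole range, $g^1(z)\le 0$ throughout, exactly as exploited in the proof of Lemma~\ref{lemma:beta_0.5}.

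First I would pin down the two critical arguments: by checking that $g_\beta'(z_1)<0$ and $g_\beta'(z_2)>0$ for every $\beta\in(\tfrac13,\tfrac12)$ at suitably chosen thresholds $z_1<z_2$ (with $z_2$ slightly below $\tfrac23$, so that $z_2<2\beta$ holds throughout), continuity of the positive region of $g_\beta'$ forces $z_1<\beta(1-\cos b)<z_2<\beta(1-\cos a)$. Evaluating $g_\beta$ at $0$, at a small argument, and at an intermediate argument then gives $g_\beta(\beta(1-\cos a))>0>g_\beta(\beta(1-\cos b))$. Using $g^1\le 0$ on $[0,2\beta]$ to discard $g^1$ from the upper bound for $g_\beta(\beta(1-\cos a))$, the global bound $g^1>-1.381$, and monotonicity of $g^2$ past its minimum (all one-variable facts already used above), I would bound $g_\beta(\beta(1-\cos a))$ from above and $g_\beta(\beta(1-\cos b))$ from below by explicit functions of $\beta$, obtaining $g_\beta(\beta(1-\cos a))-g_\beta(\beta(1-\cos b))<\varphi(\beta)$ for an explicit $\varphi$.

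Finally, since $\sqrt x\,\arccos\!\bigl(\tfrac{\sqrt{1+4x^2}-1}{2x}\bigr)$ is increasing by Lemma~\ref{lemma:tau1}, on $\beta\in(\tfrac13,\tfrac12)$ we get $\sqrt\beta\,\tau<\sqrt{1/2}\,\arccos(\sqrt2-1)<0.81$, whence
\[
\tau\bigl(f''(a)-f''(b)\bigr)=\sqrt\beta\,\tau\cdot\bigl(g_\beta(\beta(1-\cos a))-g_\beta(\beta(1-\cos b))\bigr)<0.81\,\varphi(\beta),
\]
and it remains to verify $\sqrt\beta\,\tau\,\varphi(\beta)<2$ on $(\tfrac13,\tfrac12)$; should a single pass not suffice, one subdivides $(\tfrac13,\tfrac12)$ into a few pieces and on each bounds $\sqrt\beta\,\tau$ by its value at the right endpoint via Lemma~\ref{lemma:tau1}, reducing to finitely many elementary inequalities, exactly as was done across Lemmas~\ref{lemma:beta1}--\ref{lemma:beta_0.5}. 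The main obstacle is precisely this last verification together with the uniform enclosure of $\beta(1-\cos b)$ and $\beta(1-\cos a)$: because $b$ and $a$ are only implicitly defined (as preimages under $x\mapsto\beta(1-\cos x)$ of roots of the quartic $p$ from Lemma~\ref{lemma:positiveregion}), every estimate must be read off from sign and monotonicity data for $g_\beta'$ and $g_\beta$ on fixed intervals, uniformly in $\beta$, with thresholds tight enough that $\varphi(\beta)$ stays below $2/(\sqrt\beta\,\tau)\approx 2.47$ throughout. No new idea beyond the preceding proofs is needed; the work is numerical bookkeeping, which is tightest here because the $1/\beta$ weight on $g^2$ is largest in this range.
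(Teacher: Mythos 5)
Your proposal follows essentially the same route as the paper's proof: the same substitution $z=\beta(1-\cos x)$, the same $g^1,g^2$ decomposition with $g^1\le 0$ on $[0,2\beta]$ discarded from the upper bound, sign checks of $g_\beta'$ at two thresholds (the paper uses $0.121$ and $\tfrac23$) to enclose $\beta(1-\cos b)$ and $\beta(1-\cos a)$, an explicit $\varphi(\beta)$ (the paper's constants are $0.521$, $0.285$, $1.381$), and the bound $\sqrt\beta\,\tau<\sqrt{0.5}\,\arccos(\sqrt2-1)<0.809$ from Lemma~\ref{lemma:tau1}. The remaining numerical bookkeeping you defer does close (the paper's $\varphi$ stays below $2/0.809\approx2.47$, peaking near $\beta=0.5$), so the plan is correct and matches the paper.
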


\begin{proof}
Observe that $g'_\beta(0.121) < 0$ and $g'_\beta(\frac{2}{3}) > 0$ for $\beta\in (\frac{1}{3}, 0.5)$. Thus,
\[
0.12 < \beta(1-\cos(b)) < \frac{2}{3} < \beta(1-\cos(a)).
\]
Similarly, $g_\beta(\beta(1-\cos(a))) > 0 > g_\beta(\beta(1-\cos(b)))$, because $g_\beta(0)=0$, $g_\beta(0.1)<0$, $g_\beta(\frac{2}{3})>0$.

Since $z\leq 2\beta < 1$, the maximal positive value of $g_2$ is less than $g_2(1) < 0.521$. Therefore,
\begin{align*}
& g_\beta(\beta(1-\cos(a))) - g_\beta(\beta(1-\cos(b))) <\\ & 0.521 \frac{\sqrt{2-\frac{2/3}{\beta}}}{\sqrt{2}\beta} + 0.285\frac{\sqrt{2-\frac{0.121}{\beta}}}{\sqrt{2}\beta} + 1.381 \frac{\sqrt{2-\frac{0.121}{\beta}}}{\sqrt{2}} =: \varphi(\beta)
\end{align*}
over $(\frac{1}{3}, 0.5)$; similarly, we have removed the positive term for $g^1$. Using \Cref{lemma:tau1}, $\sqrt{\beta}\tau < \sqrt{0.5}\tau(0.5) < 0.809$, so $\tau(f''(a)-f''(b)) < 0.809\varphi(\beta) < 2$.
\end{proof}

\section{Results for \texorpdfstring{$0<\beta \leq \frac{1}{3}$}{}}

In this subsection, $f(x)=\sin(x)e^{\beta(\cos(x)-1)}$.

\begin{lemma}
\label{lemma:positiveregion2}
Suppose $\beta\in (0, \frac{1}{3}]$. Then, there exists $a\in (0, \frac{\pi}{2})$ such that the nonnegative region of $f'''(x)$ over $[0, 2\pi)$ is $[a, 2\pi - a]$.
\end{lemma}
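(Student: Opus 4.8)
The plan is to reduce the statement to the sign of a single quartic polynomial in $z=\cos x$. Since $f'''(x)=p(\cos x)\,e^{\beta(\cos x-1)}$, where
\[
p(z)=-\bigl(z+3\beta z^2-4\beta(1-z^2)-6\beta^2 z(1-z^2)+\beta^3(1-z^2)^2\bigr)=-\bigl(\beta^3z^4+6\beta^2z^3+(7\beta-2\beta^3)z^2+(1-6\beta^2)z+(\beta^3-4\beta)\bigr),
\]
and the exponential factor is strictly positive, the nonnegative region of $f'''$ on $[0,2\pi)$ is exactly $\{x:p(\cos x)\ge0\}$. Thus it suffices to prove that for every $\beta\in(0,\tfrac13]$ there is $z_0\in(0,1)$ with $p\ge0$ on $[-1,z_0]$ and $p<0$ on $(z_0,1]$; setting $a:=\arccos z_0\in(0,\tfrac\pi2)$ then gives $\{x:p(\cos x)\ge0\}=\{x:\cos x\le z_0\}=[a,2\pi-a]$, which is the claim.

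First I would record the values $p(-1)=1-3\beta\ge0$, $p(0)=4\beta-\beta^3>0$, and $p(1)=-(1+3\beta)<0$ (direct substitution in the expanded form). Next, on $[0,1]$ I claim $p$ is strictly decreasing: write $p'(z)=-q(z)$ with $q(z)=4\beta^3z^3+18\beta^2z^2+(14\beta-4\beta^3)z+(1-6\beta^2)$, and note that for $\beta\le\tfrac13$ and $z\in[0,1]$ every term of $q$ is nonnegative --- using $14\beta-4\beta^3=2\beta(7-2\beta^2)>0$ and $1-6\beta^2\ge\tfrac13>0$ --- so $q>0$ and hence $p'<0$ on $[0,1]$. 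Combined with $p(0)>0>p(1)$, this produces a unique root $z_0\in(0,1)$ of $p$ on $[0,1]$, with $p>0$ on $[0,z_0)$ and $p<0$ on $(z_0,1]$.

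The remaining and main step is to show $p\ge0$ on all of $[-1,0]$, where $p$ need not be monotone. I would study $q$ on $[-1,0]$: its derivative $q'(z)=12\beta^3z^2+36\beta^2z+(14\beta-4\beta^3)$ is an upward parabola in $z$ whose vertex $z=-\tfrac{3}{2\beta}\le-\tfrac92$ lies strictly to the left of $[-1,0]$, so $q'$ is increasing there, while $q'(-1)=2\beta(4\beta^2-18\beta+7)>0$ since $4\beta^2-18\beta+7>0$ for $\beta\le\tfrac13$. Hence $q'>0$ on $[-1,0]$, so $q$ is strictly increasing on $[-1,0]$ with $q(0)=1-6\beta^2>0$. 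Two cases: if $q(-1)\ge0$, then $q>0$ on $[-1,0]$, so $p$ is decreasing there and $p\ge p(0)>0$; if $q(-1)<0$, then $q$ changes sign exactly once on $[-1,0]$, so $p$ first increases then decreases and its minimum over $[-1,0]$ is attained at an endpoint, giving $p\ge\min\{p(-1),p(0)\}=\min\{1-3\beta,\,4\beta-\beta^3\}\ge0$. Either way $p\ge0$ on $[-1,0]$.

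Combining the two halves shows $\{z\in[-1,1]:p(z)\ge0\}=[-1,z_0]$ with $z_0\in(0,1)$, and translating back through $z=\cos x$ gives $a=\arccos z_0\in(0,\tfrac\pi2)$ with nonnegative region $[a,2\pi-a]$. The only mildly delicate point is the case split on the sign of $q(-1)$ in the last step (both $q(-1)\ge0$ and $q(-1)<0$ occur within $(0,\tfrac13]$, so neither case can be skipped); everything else is a routine sign check exploiting $\beta\le\tfrac13$.
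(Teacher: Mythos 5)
Your proof is correct, and it takes a genuinely different route from the paper at the key step. Both arguments begin with the same reduction, writing $f'''(x)=p(\cos x)e^{\beta(\cos x-1)}$ with the same quartic $p$, and both record the sign values $p(1)<0$, $p(0)>0$, $p(-1)=1-3\beta\ge 0$. The paper then finishes by \emph{root counting}: it evaluates $p$ at the points $-1-\tfrac1\beta$ and $-1-\tfrac2\beta$ and uses $p(z)\to-\infty$ as $z\to-\infty$ to place one root in each of $(0,1)$, $(-1-\tfrac1\beta,-1]$, $(-1-\tfrac2\beta,-1-\tfrac1\beta)$, $(-\infty,-1-\tfrac2\beta)$; since $p$ has degree four, the root $r\in(0,1)$ is the only one in $(-1,1)$ and the sign pattern on $[-1,1]$ follows immediately. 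You instead stay entirely inside $[-1,1]$ and argue by calculus: on $[0,1]$ every term of $q=-p'$ is nonnegative for $\beta\le\tfrac13$, so $p$ is strictly decreasing there and has a unique root $z_0\in(0,1)$; on $[-1,0]$ you show $q'>0$ (vertex of the parabola $q'$ at $-\tfrac{3}{2\beta}\le-\tfrac92$, and $q'(-1)=2\beta(4\beta^2-18\beta+7)>0$), so $q$ is increasing with $q(0)>0$, whence $p$ is either monotone or unimodal on $[-1,0]$ and is bounded below by $\min\{p(-1),p(0)\}\ge0$. Your case split on the sign of $q(-1)$ is genuinely needed and handled correctly, and the conclusion that the nonnegative set in $z$ is exactly $[-1,z_0]$ (zeros of $p$ inside $[-1,z_0]$ being harmless for nonnegativity) is right. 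What each approach buys: the paper's argument is shorter and localizes all four roots of the quartic, but it requires evaluating $p$ at points outside $[-1,1]$ and leans on the degree count; yours avoids any evaluation outside $[-1,1]$ and replaces the global root count with elementary monotonicity estimates, at the cost of a somewhat longer argument with one case distinction. Both are complete proofs of the lemma.
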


\begin{proof}
We use the same method as the proof of \Cref{lemma:positiveregion}. Observe that
\begin{align*}
& p(1) = -1 - 3\beta < 0,\, p(0) = -\beta(\beta^2-4) > 0, \, p(-1) = 1 - 3\beta \geq 0, \\
& p(-1-\frac{1}{\beta}) = 5\beta - \beta^{-1} + 1 < 0, \,p(-1-\frac{2}{\beta}) = 5\beta + 6\beta^{-1} + 13>0,
\end{align*}
and $\lim_{z\rightarrow-\infty} p(z) = -\infty$. Thus, $p$ has a root in each of the following intervals: $(0, 1)$, $(-1-\frac{1}{\beta}, -1]$, $(-1 - \frac{2}{\beta}, -1 - \frac{1}{\beta})$, and $(-\infty, -1-\frac{2}{\beta})$. 

Let $r$ be the root of $p$ in $(0, 1)$. Let $a=\arccos(r)$. Because $r\in (0, 1)$, $a\in (0, \frac{\pi}{2})$. Furthermore, because $p(1) < 0$, $p(0) > 0$, and $p$ has no other roots in $(-1, 1)$, $p(z)$ is nonnegative for $z\in [-1, 1]$ if and only if $z \in [-1, r]$. Therefore, the nonnegative region of $f'''$ is $[a, 2\pi - a]$.
\end{proof}

\begin{remark}
In contrast with \Cref{lemma:positiveregion}, we consider the nonnegative region of $f'''$ rather than the positive region. The reason for this is that when $\beta=\frac{1}{3}$, $p(-1)=0$, so the positive region would be $(a, \pi)\cup (\pi, 2\pi-a)$ for this case. For simplicity, we consider the nonnegative region.
\end{remark}

\begin{corollary}
Suppose $\beta\in (0, \frac{1}{3}]$. Then, $f'' \leq 0$ over $[0, \pi]$ and $f''\geq 0$ over $[\pi, 2\pi]$.
\end{corollary}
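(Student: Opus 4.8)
The plan is to read off the monotonicity of $f''$ directly from the sign pattern of $f'''$ established in \Cref{lemma:positiveregion2}, pinned down by the vanishing of $f''$ at the ``poles'' of the circle. First I would record that, since $f''(x)=\sin(x)\bigl(-1-3\beta\cos(x)+\beta^2\sin(x)^2\bigr)e^{\beta(\cos(x)-1)}$ carries a factor of $\sin(x)$, we have $f''(0)=f''(\pi)=f''(2\pi)=0$. By \Cref{lemma:positiveregion2} there is $a\in(0,\tfrac\pi2)$ with $f'''<0$ on $(0,a)\cup(2\pi-a,2\pi)$ and $f'''\ge0$ on $[a,2\pi-a]$; note $a<\tfrac\pi2<\pi<2\pi-a$, so $\pi$ lies in the ``rising'' interval.

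Next I would translate this into statements about $f''$ on three subintervals. On $[0,a]$, $f''$ is nonincreasing and starts at $f''(0)=0$, hence $f''\le0$ there. On $[a,2\pi-a]$, $f''$ is nondecreasing; since $f''(\pi)=0$ and $\pi\in(a,2\pi-a)$, this forces $f''\le0$ on $[a,\pi]$ and $f''\ge0$ on $[\pi,2\pi-a]$. On $[2\pi-a,2\pi]$, $f''$ is nonincreasing and ends at $f''(2\pi)=0$, hence $f''\ge0$ there. Assembling the first two pieces gives $f''\le0$ on $[0,\pi]$, and assembling the last two gives $f''\ge0$ on $[\pi,2\pi]$, which is the claim.

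There is essentially no obstacle here: the only points that require a moment of care are (i) confirming the elementary identity $f''(0)=f''(\pi)=f''(2\pi)=0$ from the stated formula for $f''$, and (ii) checking the placement $a<\pi<2\pi-a$ so that $\pi$ genuinely sits in the interior of the interval where $f'''\ge0$; both are immediate from $a\in(0,\tfrac\pi2)$.
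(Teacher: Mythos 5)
Your proof is correct and follows essentially the same route as the paper: both rest on \Cref{lemma:positiveregion2} together with the vanishing of $f''$ at $0$ and $\pi$, and both argue that on $[0,\pi]$ the function $f''$ decreases from $0$ to its minimum at $a$ and then increases back to $0$. The only (immaterial) difference is in the second half: the paper concludes $f''\geq 0$ on $[\pi,2\pi]$ by oddness of $f''$, whereas you continue the monotonicity argument through $2\pi-a$ and use $f''(2\pi)=0$.
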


\begin{proof}
Using \Cref{lemma:positiveregion2}, assume that the nonnegative region of $f'''$ is $[a, 2\pi-a]$ for $a\in (0, \frac{\pi}{2})$. Then, since $f''(0)=f''(\pi) = 0$, we have that over $[0, \pi]$, $f''$ first decreases from $0$ to its minimal value at $a$ and then increases to $0$, so $f''$ is non-positive. Because $f''$ is odd, $f''$ is nonnegative over $[\pi, 2\pi]$.
\end{proof}

\begin{lemma}
\label{lemma:beta_final}
Suppose $\beta\in (0, \frac{1}{3}]$. Then, $\tau f''(a)> -2$.
\end{lemma}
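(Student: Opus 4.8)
The plan is to turn the statement into a one-variable inequality in $r\triangleq\cos a$. By \Cref{lemma:positiveregion2}, $r$ is the unique root of $p$ in $(0,1)$, so $p(r)=0$; and since $a\in(0,\tfrac{\pi}{2})$ we have $\sin a=\sqrt{1-r^2}\in(0,1)$. Using the formula for $f''$ computed in the proof of \Cref{cor:conjecture},
\[
f''(a)=\sqrt{1-r^2}\,\bigl(-1-3\beta r+\beta^2(1-r^2)\bigr)\,e^{\beta(r-1)},
\]
and since $-1-3\beta r+\beta^2(1-r^2)\le-1+\beta^2<0$, we have $f''(a)<0$, so it suffices to prove $\tau\,|f''(a)|<2$. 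As $\beta>0$ forces $\tfrac{\sqrt{1+4\beta^2}-1}{2\beta}\in(0,1)$, we have $\tau<\tfrac{\pi}{2}$, so it is enough to show
\[
|f''(a)|<\frac{4}{\pi}.
\]

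The crux is a lower bound on $\beta$ extracted from $p(r)=0$. Written out, $p(r)=0$ is the identity
\[
r\bigl(1+3\beta r\bigr)=(1-r^2)\bigl(4\beta+6\beta^2r-\beta^3(1-r^2)\bigr).
\]
The left side is $\ge r$; on the right, $\beta^3(1-r^2)>0$ and $6\beta^2r=2\beta r\cdot(3\beta)\le2\beta r$ because $3\beta\le1$, so the right side is $<2\beta(1-r^2)(2+r)$. Hence $r<2\beta(1-r^2)(2+r)$, i.e.
\[
\beta>\frac{r}{2(1-r^2)(2+r)}.
\]
Dropping the negative term $-\beta^2(1-r^2)$, using $3\beta r\le r$, and then inserting this bound on $\beta$ (legitimate since $1-r>0$ and $\tfrac{1-r}{1-r^2}=\tfrac{1}{1+r}$),
\[
|f''(a)|=\sqrt{1-r^2}\,\bigl(1+3\beta r-\beta^2(1-r^2)\bigr)e^{-\beta(1-r)}\le\sqrt{1-r^2}\,(1+r)\exp\!\Bigl(-\frac{r}{2(1+r)(2+r)}\Bigr)=:G(r).
\]

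It remains to check $G(r)<\tfrac{4}{\pi}$ for all $r\in[0,1)$, which is a routine single-variable estimate: $G$ extends continuously to $[0,1]$ with $G(0)=1$, $G(1)=0$, while $\ln G$ is strictly concave on $[0,1)$ with maximiser in $(0,\tfrac{1}{2})$, where one verifies $G\le1.22<\tfrac{4}{\pi}\approx1.2732$. Combined with $\tau<\tfrac{\pi}{2}$ this gives $\tau\,|f''(a)|<\tfrac{\pi}{2}\cdot\tfrac{4}{\pi}=2$, i.e.\ $\tau f''(a)>-2$. The one genuinely delicate point is this last estimate: the naive bound $\sqrt{1-r^2}\,(1+r)\le\tfrac{3\sqrt3}{4}\approx1.299$ (attained at $r=\tfrac{1}{2}$) already exceeds $\tfrac{4}{\pi}$, so the argument really needs the exponential factor produced by the lower bound on $\beta$; since the constant $2(1+r)(2+r)$ in its exponent is small enough — at $r=\tfrac{1}{2}$ it contributes the factor $e^{-1/15}\approx0.936$ — this suffices to push $G$ below $\tfrac{4}{\pi}$. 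Everything else is bookkeeping with the explicit expressions for $p$ and $f''$ already displayed in the paper.
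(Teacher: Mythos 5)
Your proof is correct, but it takes a genuinely different route from the paper. The paper bounds $|f''(a)|$ crudely by $(1+3\beta)\sin(\tau)e^{\beta\cos(\tau)}e^{-\beta}$ (using that $\sin(x)e^{\beta\cos x}$ is maximized at $x=\tau$ and $\cos\tau=\beta\sin^2\tau\le\beta$), reducing to $(1+3\beta)e^{\beta^2-\beta}\sin(\tau)\,\tau<2$, and then verifies this by splitting $(0,\tfrac13]$ into five subintervals and plugging in numerical values of $\tau$ at the endpoints (via monotonicity of $\tau(\beta)$). You instead work in the variable $r=\cos a$ and exploit the defining relation $p(r)=0$, which the paper never uses quantitatively: rearranged as $r(1+3\beta r)=(1-r^2)\bigl(4\beta+6\beta^2r-\beta^3(1-r^2)\bigr)$, it yields (with $3\beta\le1$) the lower bound $\beta>\tfrac{r}{2(1-r^2)(2+r)}$, and this is exactly what supplies the exponential factor needed to beat the otherwise insufficient bound $\sqrt{1-r^2}(1+r)\le\tfrac{3\sqrt3}{4}>\tfrac{4}{\pi}$; combined with the crude $\tau<\tfrac{\pi}{2}$ you reduce everything to the single univariate inequality $G(r)<\tfrac{4}{\pi}$ on $[0,1)$. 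I checked the key steps: the algebra from $p(r)=0$, the sign and strictness of the bound on $\beta$, the formula $|f''(a)|=\sqrt{1-r^2}\bigl(1+3\beta r-\beta^2(1-r^2)\bigr)e^{-\beta(1-r)}$, the concavity of $\ln G$ (indeed $(\ln G)''=-\tfrac{1}{2(1-r)^2}-\tfrac{3}{2(1+r)^2}+\tfrac{1}{(1+r)^3}-\tfrac{2}{(2+r)^3}<0$), and the maximum $G\approx1.216$ near $r\approx0.47$, so $G\le1.22<\tfrac{4}{\pi}$ holds as you claim. What each approach buys: yours avoids any case analysis in $\beta$ and isolates a clean structural reason the inequality holds uniformly (the root equation forces $\beta$ to be not too small relative to $r$), at the cost of a final numerical single-variable verification that you state somewhat loosely ("one verifies $G\le1.22$") — you should make the concavity-plus-bracketing check explicit, though this is no less rigorous than the paper's own unproved endpoint evaluations; the paper's argument uses only evaluations of explicit elementary expressions at a handful of numbers and stays within the $\tau(\beta)$ machinery it reuses elsewhere, at the cost of five cases and a less transparent mechanism.
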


\begin{proof}
We have that 
\[
f''(a) \geq -(1+3\beta) \sin(a)e^{\beta\cos(a)}e^{-\beta} \geq -(1+3\beta)\sin(\tau)e^{\beta\cos(\tau)}e^{-\beta}.
\]
Thus, it suffices to prove that
\[
(1+3\beta)\sin(\tau)e^{\beta\cos(\tau)}e^{-\beta}\tau < 2.
\]
Observe that 
\[
0\leq \cos(\tau) = \beta\sin(\tau)^2 \leq \beta,
\]
so it suffices to prove that
\[
(1+3\beta)e^{\beta^2-\beta} \sin(\tau)\tau < 2.
\]

Assume that $0<\beta \leq 0.148$. Then, $\tau \leq \frac{\pi}{2}$ and $\sin(\tau) \leq 1$. Because
\[
(1+3\beta)e^{\beta^2 - \beta}\cdot \frac{\pi}{2} < 2,
\]
we have that $\tau f''(a) > -2$. ($0.148$ is approximately the maximal value of $\beta$ for which this method is correct.)

Assume that $0.148 < \beta \leq 0.228$. Then, $\tau < \tau(0.148) < 1.43$ and $\sin(\tau) < \sin(1.43)$. Because
\[
(1+3\beta)e^{\beta^2-\beta}\cdot 1.43\sin(1.43) < 2,
\]
we have that $\tau f''(a) > -2$.

Assume that $0.228 < \beta \leq 0.278$. Then, $\tau < \tau(0.228) < 1.36$ and $\sin(\tau) < \sin(1.36)$. Because
\[
(1+3\beta)e^{\beta^2 - \beta}\cdot 1.36\sin(1.36) < 2,
\]
we have that $\tau f''(a) > -2$.

Assume that $0.278 < \beta \leq 0.321$. Then, $\tau < \tau(0.278) < 1.31$ and $\sin(\tau) < \sin(1.31)$. Because 
\[
(1+3\beta)e^{\beta^2-\beta}\cdot 1.31\sin(1.31) < 2,
\]
we have that $\tau f''(a) > -2$.

Assume that $0.321< \beta \leq \frac{1}{3}$. Then, $\tau < \tau(0.321) < 1.28$ and $\sin(\tau) < \sin(1.28)$. Because 
\[
(1+3\beta)e^{\beta^2-\beta}\cdot 1.28\sin(1.28) < 2,
\]
we have that $\tau f''(a) > -2$.
\end{proof}

\section{Results for \texorpdfstring{$-\frac{1}{3}<\beta<0$}{}}

In this subsection, $f(x)=\sin(x)e^{\beta(\cos(x)-1)}$.

\begin{lemma}
\label{lemma:positiveregion3}
Suppose $\beta\in (-\frac{1}{3}, 0)$. Then, there exists $a\in (\frac{\pi}{2}, \pi)$ such that the nonnegative region of $f'''(x)$ over $[0, 2\pi)$ is $[a, 2\pi - a]$.
\end{lemma}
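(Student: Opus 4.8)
The plan is to mimic the substitution used in \Cref{lemma:positiveregion} and \Cref{lemma:positiveregion2}. Writing $\beta=-b$ with $b\in(0,\tfrac13)$ and setting $z=\cos x$, one has $f'''(x)=p(\cos x)\,e^{\beta(\cos x-1)}$ where, after expanding,
\[
p(z)=b^3z^4-6b^2z^3+(7b-2b^3)z^2+(6b^2-1)z+(b^3-4b).
\]
Since the exponential factor is positive and $x\mapsto\cos x$ is a decreasing bijection from $[0,\pi]$ onto $[-1,1]$, identifying the nonnegative region of $f'''$ on $[0,\pi]$ amounts to identifying $\{z\in[-1,1]:p(z)\ge0\}$.

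The crux is that $p$ is strictly convex on $[-1,1]$. Differentiating twice gives $p''(z)=2b\bigl(6b^2z^2-18bz+7-2b^2\bigr)$, and for $z\in[-1,1]$ and $b\in(0,\tfrac13)$ the bracket is at least $7-18b-2b^2$ (using $6b^2z^2\ge0$ and $-18bz\ge-18b$ for $|z|\le1$); this quantity is decreasing in $b$ on $(0,\tfrac13)$ and equals $\tfrac79>0$ at $b=\tfrac13$, so $p''>0$ throughout $[-1,1]$.

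It then remains only to evaluate $p$ at three points: one computes $p(1)=3b-1<0$, $p(0)=b(b^2-4)<0$, and $p(-1)=3b+1>0$. A strictly convex function on $[-1,1]$ that is positive at the left endpoint and negative at the right endpoint has sublevel set $\{z\in[-1,1]:p(z)\le0\}$ equal to $[r,1]$ for a unique $r\in(-1,1)$, with $p(r)=0$, $p>0$ on $[-1,r)$, and $p<0$ on $(r,1]$; the inequality $p(0)<0$ then forces $r\in(-1,0)$. Setting $a=\arccos r\in(\tfrac\pi2,\pi)$ we obtain $\{x\in[0,\pi]:f'''(x)\ge0\}=[a,\pi]$. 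Finally, since $f(x)=\sin(x)e^{\beta(\cos x-1)}$ is odd, $f'''$ is even and $2\pi$-periodic, so reflecting across $x=\pi$ shows the nonnegative region of $f'''$ over $[0,2\pi)$ is the symmetric interval $[a,2\pi-a]$, which is the claim.

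There is no genuine obstacle here beyond bookkeeping; the only point that deserves a moment of care is checking that the bracket $6b^2z^2-18bz+7-2b^2$ remains positive on the full interval $[-1,1]$ (not merely on $[0,1]$), which follows at once from $-18bz\ge-18b$ valid for all $|z|\le1$ together with $7-18b-2b^2>0$ for $b<\tfrac13$. (Note that, in contrast to the positive-$\beta$ appendix lemmas, no case split in $\beta$ is needed: convexity handles all of $\beta\in(-\tfrac13,0)$ uniformly.)
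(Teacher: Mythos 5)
Your proof is correct, but it rules out extra sign changes of $p$ on $[-1,1]$ by a different mechanism than the paper. The paper keeps $p$ in its unexpanded form and only evaluates signs: besides $p(1)<0$, $p(0)<0$, $p(-1)>0$ it checks $p(1-\tfrac{1}{\beta})>0$, $p(1-\tfrac{2}{\beta})<0$ and $p(z)\to\infty$ as $z\to\infty$, thereby locating one root in $(-1,0)$ and the remaining three roots of the quartic strictly to the right of $1$; since a quartic has at most four roots, $p$ can have no further zero in $(-1,1)$, and the sign pattern then gives $\{p\ge 0\}\cap[-1,1]=[-1,r]$. You instead expand $p$ explicitly in $b=-\beta$, show $p''(z)=2b\bigl(6b^2z^2-18bz+7-2b^2\bigr)>0$ on $[-1,1]$ for $b\in(0,\tfrac13)$ (your lower bound $7-18b-2b^2\ge\tfrac79$ is correct), and use strict convexity together with $p(-1)>0$, $p(0)<0$, $p(1)<0$ to get the same sublevel structure; your endpoint values $3b+1$, $b(b^2-4)$, $3b-1$ agree with the paper's $1-3\beta$, $-\beta(\beta^2-4)$, $-1-3\beta$. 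The trade-off: the paper's argument is pure sign-checking with no differentiation and reuses verbatim the template of the companion lemmas for $\beta>\tfrac13$ and $0<\beta\le\tfrac13$ (where $f'''$ can have a more complicated positivity set, so convexity would not suffice), while your argument is self-contained on $[-1,1]$, needs no $\beta$-dependent auxiliary evaluation points, and handles all of $\beta\in(-\tfrac13,0)$ uniformly through one derivative bound. The final passage from $\{z\in[-1,1]:p(z)\ge0\}=[-1,r]$ to the interval $[a,2\pi-a]$, via evenness and periodicity of $f'''$ (equivalently, via $\cos x\le r\Leftrightarrow x\in[\arccos r,\,2\pi-\arccos r]$), matches the paper.
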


\begin{proof}
We use the same method as the proof of \Cref{lemma:positiveregion}. Observe that
\begin{align*}
& p(1) = -1 - 3\beta < 0,\, p(0) = -\beta(\beta^2-4) < 0, \, p(-1) = 1 - 3\beta > 0, \\
& p(1-\frac{1}{\beta}) = 5\beta - \beta^{-1} - 1 > 0, \,p(1-\frac{2}{\beta}) = 5\beta + 6\beta^{-1} - 13 < 0,
\end{align*}
and $\lim_{z\rightarrow\infty} p(z) =\infty$. Thus, $p$ has a root in each of the following intervals: $(-1, 0)$, $(1, 1-\frac{1}{\beta})$, $(1 - \frac{1}{\beta}, 1 - \frac{2}{\beta})$, and $(1-\frac{2}{\beta}, \infty)$. 

Let $r$ be the root of $p$ in $(-1, 0)$. Let $a=\arccos(r)$. Because $r\in (-1, 0)$, $a\in (\frac{\pi}{2}, \pi)$. Furthermore, because $p(-1) > 0$, $p(0) < 0$, and $p$ has no other roots in $(-1, 1)$, $p(z)$ is nonnegative for $z\in [-1, 1]$ if and only if $z \in [-1, r]$. Therefore, the nonnegative region of $f'''$ is $[a, 2\pi - a]$.
\end{proof}

\begin{remark}
If $\beta=-\frac{1}{3}$, the nonnegative region of $f'''(x)$ over $[0, 2\pi)$ is $[a, 2\pi-a]\cup \{0\}$, since $p(1)=0$.
\end{remark}

\begin{corollary}
Suppose $\beta\in (-\frac{1}{3}, 0)$. Then, $f'' \leq 0$ over $[0, \pi]$ and $f''\geq 0$ over $[\pi, 2\pi]$.
\end{corollary}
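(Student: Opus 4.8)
The plan is to read off the sign of $f''$ on $[0,\pi]$ directly from the location of the positive region of $f'''$ just established in \Cref{lemma:positiveregion3}, exactly mirroring the corollary that follows \Cref{lemma:positiveregion2}. First I would record the boundary data: since $f(x)=\sin(x)e^{\beta(\cos(x)-1)}$, the explicit formula $f''(x)=(-\sin(x)-3\beta\sin(x)\cos(x)+\beta^2\sin(x)^3)e^{\beta(\cos(x)-1)}$ carries a factor of $\sin x$, so $f''(0)=f''(\pi)=0$.

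Next I would invoke \Cref{lemma:positiveregion3}: there is $a\in(\tfrac{\pi}{2},\pi)$ with $f'''<0$ on $[0,a)$ and $f'''\geq 0$ on $[a,\pi]$. Hence $f''$ is decreasing on $[0,a]$ starting from $f''(0)=0$, so $f''\leq 0$ there, with its minimum attained at $a$; and $f''$ is nondecreasing on $[a,\pi]$ up to $f''(\pi)=0$, so $f''\leq 0$ on $[a,\pi]$ as well. Combining the two pieces gives $f''\leq 0$ throughout $[0,\pi]$.

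Finally, I would use symmetry: $f$ is odd (because $\sin$ is odd and $\cos$ is even), so $f'$ is even and $f''$ is odd; together with $2\pi$-periodicity this yields $f''(x)=-f''(2\pi-x)\geq 0$ for $x\in[\pi,2\pi]$, since then $2\pi-x\in[0,\pi]$.

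I do not anticipate any real obstacle here; the only point worth a word of care is that \Cref{lemma:positiveregion3} furnishes the closed \emph{nonnegative} region $[a,2\pi-a]$ rather than an open positive region, but this affects only the single point $x=a$ and does not change the monotonicity conclusion. The argument is therefore a two-line variant of the earlier corollary following \Cref{lemma:positiveregion2}.
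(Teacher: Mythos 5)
Your proposal is correct and follows essentially the same route as the paper: boundary values $f''(0)=f''(\pi)=0$, the sign pattern of $f'''$ from \Cref{lemma:positiveregion3} forcing $f''$ to decrease to its minimum at $a$ and then increase back to $0$ on $[0,\pi]$, and the oddness of $f''$ to transfer the sign to $[\pi,2\pi]$. Your remark about the nonnegative (rather than strictly positive) region at the endpoint $a$ is a harmless refinement that does not change the argument.
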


\begin{proof}
Using \Cref{lemma:positiveregion2}, assume that the nonnegative region of $f'''$ is $[a, 2\pi-a]$ for $a\in (\frac{\pi}{2}, \pi)$. Then, since $f''(0)=f''(\pi) = 0$, we have that over $[0, \pi]$, $f''$ first decreases from to $0$ to its minimal value at $a$ and then increases to $0$, so $f''$ is non-positive. Because $f''$ is odd, $f''$ is nonnegative over $[\pi, 2\pi]$.
\end{proof}

\begin{lemma}
\label{lemma:semicircle}
Suppose $\beta<0$. If $\mathbf{x}$ is a stable, stationary, and non-synchronized point of \pref{eq:generalsystem2}, then $\tau_{\text{max}}(\mathbf{x})<\pi$.
\end{lemma}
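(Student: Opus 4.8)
The plan is to argue by contradiction. Suppose $\mathbf{x}$ is stable, stationary, and non-synchronized, yet $\tau_{\max}(\mathbf{x}) \geq \pi$. As in the proof of Theorem~\ref{thm:main}, write the distinct positions as $\theta_1 < \cdots < \theta_K$ (so $K \geq 2$) with gaps $\Delta_1, \ldots, \Delta_K$, and say the maximal gap $\Delta_\ell = \tau_{\max} \geq \pi$ lies between $\theta_\ell$ and $\theta_{\ell+1}$. Since the remaining gaps sum to $L := 2\pi - \tau_{\max} \leq \pi$, every particle lies on the closed arc traversed counterclockwise from $\theta_{\ell+1}$ to $\theta_\ell$, whose length is $L \leq \pi$. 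Concretely, $\theta_\ell \equiv \theta_{\ell+1} + L \pmod{2\pi}$, and for each occupied position $\theta_m$ we have $\theta_\ell - \theta_m \in [0, L] \subseteq [0, \pi] \pmod{2\pi}$.

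The key step is then to read off stationarity~\eqref{eq:condition1} at a particle sitting at the leading endpoint $\theta_\ell$. Grouping equal positions, $0 = \sum_{m=1}^K N_m\, f(\theta_\ell - \theta_m)$. Since $f(y) = \sin(y)\,e^{\beta(\cos y - 1)}$ satisfies $f(y) \geq 0$ on $[0,\pi]$ with equality only at $y \in \{0, \pi\}$, and each $\theta_\ell - \theta_m \in [0,\pi]$, every summand is nonnegative; hence all of them vanish, i.e.\ every particle occupies either $\theta_\ell$ or $\theta_\ell - \pi$, and the latter lies on the arc only if $L = \pi$. If $L < \pi$ this forces every particle to be at $\theta_\ell$, so $\mathbf{x}$ is synchronized, contradicting the hypothesis. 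Thus $L = \pi$, and since $\mathbf{x}$ is not synchronized the particles split between the two antipodal points $\theta_\ell$ and $\theta_\ell - \pi$ with both clusters nonempty.

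To dispose of the antipodal configuration I would invoke cut-stability~\eqref{eq:condition2} (which holds since $\mathbf{x}$ is stable), applied with $S$ the set of indices of particles at $\theta_\ell$: it gives $|S|\,|S^c|\,f'(\pi) \geq 0$, but $f'(x) = (\cos x - \beta \sin^2 x)e^{\beta(\cos x - 1)}$ yields $f'(\pi) = -e^{-2\beta} < 0$, a contradiction. (Equivalently, both gaps now equal $\pi$, and since $\beta < 0$ one checks that $a(\beta) = \arccos\!\big(\tfrac{\sqrt{1+4\beta^2}-1}{2\beta}\big) \in (\tfrac{\pi}{2}, \pi)$, so $\pi$ exceeds $\tau = a(\beta)$ and Lemma~\ref{lemma:twoconsecutive_general} is violated.) I do not anticipate any real obstacle; the only delicate points are the orientation bookkeeping for the arc and the observation that the boundary case $L = \pi$ cannot be excluded by stationarity alone — it genuinely requires the cut-stability (equivalently, Hessian) information, in contrast to the strict case $L < \pi$.
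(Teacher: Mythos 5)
Your argument is correct, but it takes a different route from the paper: the paper's entire proof of this lemma is a one-line citation of \cite{synchronization_nonlinear} (their Lemma 10, applied with $\varphi(t)=-e^{\beta t}$), whereas you give a direct, self-contained ``semicircle'' argument. Your steps all check out: if $\tau_{\max}\ge\pi$ then all occupied positions lie on a closed arc of length $L=2\pi-\tau_{\max}\le\pi$ ending at $\theta_\ell$, so every argument $\theta_\ell-\theta_m$ lies in $[0,\pi]$ modulo $2\pi$; since $f_\beta(y)=\sin(y)e^{\beta(\cos y-1)}\ge 0$ there with equality only at $0,\pi$, stationarity \eqref{eq:condition1} at $\theta_\ell$ forces either synchronization (when $L<\pi$) or the antipodal two-cluster configuration (when $L=\pi$); and the latter is indeed stationary, so you rightly note it can only be excluded by cut-stability \eqref{eq:condition2}, which fails because $f'(\pi)=-e^{-2\beta}<0$ (your alternative via \Cref{lemma:twoconsecutive_general}, using $\tau=\arccos\bigl(\tfrac{\sqrt{1+4\beta^2}-1}{2\beta}\bigr)<\pi$ for $\beta<0$, also works). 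What your approach buys is independence from the external reference and, in fact, a statement valid for every $\beta\in\mathbb{R}$, not only $\beta<0$, since nonnegativity of $f_\beta$ on $[0,\pi]$ and $f'_\beta(\pi)<0$ hold for all $\beta$; it is essentially the boundary-particle argument the paper reviews in Section~\ref{sec:mainproof}, upgraded to handle the antipodal boundary case. What the citation buys the authors is brevity, at the cost of leaving the argument implicit. One small presentational caveat: ``stable'' must be read, as the paper does throughout, as implying the cut-stability condition \eqref{eq:condition2} (equivalently negative semidefiniteness of the Hessian), which is exactly how you use it.
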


\begin{proof}
This is implied by \cite[Lemma 10]{synchronization_nonlinear} with $\varphi(t)=-e^{\beta t}$.
\end{proof}

\begin{lemma}
\label{lemma:beta_negative}
Suppose $\beta\in [-0.16, 0)$. Then, $\tau f''(a) \geq -2\left(1+\frac{\tau}{\pi}\right)$.
\end{lemma}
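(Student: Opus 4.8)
The plan is to exploit the comfortable slack in the inequality: since $a\in(\tfrac{\pi}{2},\pi)$ by \Cref{lemma:positiveregion3} and $f'''<0$ on $[0,a)$ while $f''(0)=0$, the value $f''(a)$ is strictly negative, so the target $\tau f''(a)\geq-2(1+\tau/\pi)$ is equivalent to the upper bound $\tau\,|f''(a)|\leq 2\bigl(1+\tfrac{\tau}{\pi}\bigr)$. It therefore suffices to bound $\tau$ and $|f''(a)|$ separately and rather crudely, which dispenses with any case analysis in $\beta$.

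First I would pin down $\tau$. As in the case $\beta\geq 0$, one has $f'(x)<0\iff \beta\cos^2x+\cos x-\beta<0$, and for $\beta<0$ this downward parabola in $\cos x$ has a unique root in $[-1,1]$, namely $\cos x=\tfrac{\sqrt{1+4\beta^2}-1}{2\beta}=-\tfrac{2|\beta|}{1+\sqrt{1+4\beta^2}}$, so $\tau=\arccos\!\bigl(\tfrac{\sqrt{1+4\beta^2}-1}{2\beta}\bigr)$. Since $1+\sqrt{1+4\beta^2}\geq 2$, this argument lies in $[-|\beta|,0)\subseteq[-0.16,0)$, hence $\tau\in\bigl(\tfrac{\pi}{2},\arccos(-0.16)\bigr)\subset\bigl(\tfrac{\pi}{2},1.732\bigr)$; in particular $2(1+\tau/\pi)>3$.

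Next I would bound $|f''(a)|$. Writing $r=\cos a\in(-1,0)$, $b=|\beta|\in(0,0.16]$ and $u=-r\in(0,1)$, the explicit formula $f''(x)=\sin x\,(-1-3\beta\cos x+\beta^2\sin^2 x)e^{\beta(\cos x-1)}$ evaluated at $a$ gives $|f''(a)|=\sqrt{1-u^2}\,e^{b(u+1)}\bigl(1+3bu-b^2(1-u^2)\bigr)$, where the last factor is positive since it is $\geq 1-b^2>0$. Bounding $b\leq 0.16$ and $u\leq 1$ yields
\[
|f''(a)|\;\leq\;e^{0.32}\,\max_{u\in[0,1]}\sqrt{1-u^2}\,(1+0.48u)\;<\;1.51,
\]
the inner maximum being an elementary single variable computation ($\max_u(1-u^2)(1+0.48u)^2<1.095^2$, attained near $u\approx0.357$). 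Combining the two bounds gives $\tau\,|f''(a)|<1.732\cdot1.51<2.62<3<2(1+\tau/\pi)$, which is the claim, in fact with strict inequality.

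I do not expect a genuine obstacle here; the only points requiring care are the sign bookkeeping (that $f''(a)<0$, and that $1+3bu-b^2(1-u^2)>0$) and verifying the one variable maximum $\max_{[0,1]}\sqrt{1-u^2}(1+0.48u)<1.095$, both routine.
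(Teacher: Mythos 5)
Your proposal is correct, but it takes a different route from the paper. You bound $\tau$ and $|f''(a)|$ by absolute constants: you compute $|f''(a)|=\sqrt{1-u^2}\,e^{b(u+1)}\bigl(1+3bu-b^2(1-u^2)\bigr)$ with $u=-\cos a$, $b=|\beta|$, relax it to $e^{0.32}\max_{u\in[0,1]}\sqrt{1-u^2}(1+0.48u)<1.51$ via a one-variable maximization, note $\cos\tau=\tfrac{\sqrt{1+4\beta^2}-1}{2\beta}\in[-|\beta|,0)$ so that $\tfrac{\pi}{2}<\tau<1.732$, and conclude $\tau|f''(a)|<2.62<3<2(1+\tau/\pi)$; all of these numerical steps check out (the critical point of $(1-u^2)(1+0.48u)^2$ is indeed near $u\approx0.357$ with maximum below $1.095^2$, and $e^{0.32}\cdot 1.095<1.51$), and the sign issue is harmless since the claim is trivial anyway if $f''(a)\ge 0$. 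The paper instead keeps the $\beta$-dependence: it bounds $f''(a)\ge -(1-3\beta)\sin(a)e^{\beta\cos(a)}e^{-\beta}$, uses that $f$ is maximized at $\tau$ on $[0,\pi]$ to replace $\sin(a)e^{\beta\cos a}$ by $\sin(\tau)e^{\beta\cos\tau}$, uses $\cos\tau=\beta\sin^2\tau\ge\beta$ to reduce to $\tau\bigl((1-3\beta)e^{\beta^2-\beta}\sin\tau-\tfrac{2}{\pi}\bigr)\le 2$, and finishes by a monotonicity-in-$\beta$ argument with a single check at $\beta=-0.16$. Your version is more self-contained and avoids both the comparison $f(a)\le f(\tau)$ and the monotonicity argument, at the price of cruder constants; the paper's chain tracks $\beta$ more tightly, which is what makes the threshold $-0.16$ (and its possible improvement, as remarked after the lemma) more transparent, whereas your uniform bound relies on the slack $2(1+\tau/\pi)>3$ coming from $\tau>\pi/2$.
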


\begin{proof}
We have that 
\[
f''(a) \geq -(1-3\beta) \sin(a)e^{\beta\cos(a)}e^{-\beta} \geq -(1-3\beta)\sin(\tau)e^{\beta\cos(\tau)}e^{-\beta}.
\]
Thus, it suffices to prove that
\[
(1-3\beta)\sin(\tau)e^{\beta\cos(\tau)}e^{-\beta}\tau \leq 2\left(1+\frac{\tau}{\pi}\right).
\]
Observe that 
\[
0\geq \cos(\tau) = \beta\sin(\tau)^2 \geq \beta,
\]
so it suffices to prove that
\[
(1-3\beta)e^{\beta^2-\beta} \sin(\tau)\tau \leq 2\left(1+\frac{\tau}{\pi}\right).
\]
Equivalently, it suffices to prove that
\[
\tau\left((1-3\beta)e^{\beta^2-\beta} \sin(\tau) - \frac{2}{\pi}\right) \leq 2.
\]

Assume that $\beta \in [-0.16, 0)$. Observe that both $\tau$ and $(1-3\beta)e^{\beta^2-\beta} - \frac{2}{\pi}$ increase as $\beta$ decreases from $0$. At $\beta=-0.16$, we have that
\[
\tau\left((1-3\beta)e^{\beta^2-\beta} - \frac{2}{\pi}\right) \leq 2,
\]
so this must be the case for all $\beta\in [-0.16, 0)$. Thus,
\[
\tau\left((1-3\beta)e^{\beta^2-\beta} \sin(\tau) - \frac{2}{\pi}\right) \leq \tau\left((1-3\beta)e^{\beta^2-\beta} - \frac{2}{\pi}\right) \leq 2
\]
for all $\beta\in [-0.16, 0)$.
\end{proof}

The lower bound $-0.16$ for $\beta$ in \Cref{lemma:beta_negative} can be improved, but we omit this refinement for simplicity. We establish that global synchronization does not occur for all negative values of $\beta$.

\begin{lemma}
\label{lemma:beta_nonsync}
Suppose $\beta < -\frac{2}{3}$. Then, if $n$ is a multiple of $3$ or sufficiently large, there exists a value of $\mathbf{x}\in\mathbb{T}^n$ such that:
\begin{enumerate}
\item There exists three elements $p_1$,$p_2$, and $p_3$ of $\mathbb{T}$ such that $\lfloor\frac{n}{3}\rfloor$, $\lfloor\frac{n}{3}\rfloor$, and $n-2\lfloor\frac{n}{3}\rfloor$ points of $\mathbf{x}$ are placed at $p_1$, $p_2$, and $p_3$, respectively.
\item The vector $\mathbf{x}$ is a critical point of $E$ and the Hessian of $E$ over $\mathbb{T}^n$ at $\mathbf{x}$ is negative semidefinite, with only one eigenvalue whose eigenvectors are the scalar multiples of the vector $[1,\ldots,1]^{\top}$.
\end{enumerate}
\end{lemma}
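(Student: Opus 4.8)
\emph{Proof proposal.} The plan is to exhibit an explicit three-cluster configuration, treating the clean symmetric case $3\mid n$ first and then perturbing it for large $n$. When $3\mid n$ I would place $\lfloor n/3\rfloor=n/3$ particles at each of $0,\tfrac{2\pi}{3},\tfrac{4\pi}{3}$; since $f$ is odd and $2\pi$-periodic, $f(-\tfrac{2\pi}{3})+f(-\tfrac{4\pi}{3})=-f(\tfrac{2\pi}{3})+f(\tfrac{2\pi}{3})=0$, so this is stationary (a critical point of $E$) for every such $n$, with no largeness needed. For general $n$ I would instead put $m_1:=\lfloor n/3\rfloor$ particles at $-\theta$, $m_3:=n-2\lfloor n/3\rfloor$ at $0$, and $m_1$ at $\theta$, with $\theta$ near $\tfrac{2\pi}{3}$. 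The reflection $x\mapsto-x$ swaps the two equal-mass outer clusters and fixes the middle one, so the net force on the middle cluster vanishes identically and stationarity reduces to the single scalar equation $G(\theta):=m_3 f(\theta)+m_1 f(2\theta)=0$. Using $f(\tfrac{2\pi}{3})=\tfrac{\sqrt3}{2}e^{-3\beta/2}>0$ and $f(\tfrac{4\pi}{3})=-f(\tfrac{2\pi}{3})$ gives $G(\tfrac{2\pi}{3})=(m_3-m_1)f(\tfrac{2\pi}{3})=O(1)$, while $G'(\tfrac{2\pi}{3})=(m_3+2m_1)f'(\tfrac{2\pi}{3})$ with $f'(\tfrac{2\pi}{3})=\bigl(-\tfrac12-\tfrac34\beta\bigr)e^{-3\beta/2}>0$ precisely because $\beta<-\tfrac23$. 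Hence $G'\asymp n$ near $\tfrac{2\pi}{3}$, and a quantitative intermediate-value argument (with a crude bound on $G''$ on an $O(1/n)$-neighborhood of $\tfrac{2\pi}{3}$) produces a root $\theta^\ast=\tfrac{2\pi}{3}+O_\beta(1/n)$ once $n\ge C_\beta$. In either case the three positions are distinct and $\mathbf{x}$ is a critical point of $E$, which is item (1).

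For item (2), by \Cref{remark:negative_semidef} the Hessian quadratic form at $\mathbf{x}$ in the orthonormal $\mathbb{T}^n$-coordinates $c=(\dot x_1,\dots,\dot x_n)$ is a negative multiple of $Q(c):=\sum_{i,j}(c_i-c_j)^2 f'(x_i-x_j)=2\,c^\top\!\bigl(\mathrm{diag}(D)-F\bigr)c$, where $D_i=\sum_j f'(x_i-x_j)$ and $F_{ij}=f'(x_i-x_j)$, so it suffices to show $L:=\mathrm{diag}(D)-F\succeq0$ with zero set exactly $\mathrm{span}[1,\dots,1]^\top$. Since the configuration occupies only three distinct points, $F$ has a $3\times3$ block structure (diagonal blocks equal to $f'(0)=1$ times an all-ones matrix, off-diagonal blocks equal to $f'(\theta^\ast)$ or $f'(2\theta^\ast)$ times an all-ones matrix) and $D$ is constant on each cluster; consequently $\mathbb{R}^n$ splits orthogonally, and $L$-invariantly, into the subspace $U$ of vectors with zero mean on every cluster and the $3$-dimensional subspace $V$ of vectors constant on every cluster.

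On $U$ one has $Fu=0$, so $L|_U=\mathrm{diag}(D)|_U$; each $D_k$ is a positive combination of $f'(0)=1$ and the quantities $f'(\theta^\ast),f'(2\theta^\ast)$, which are positive since $\theta^\ast$ is close to $\tfrac{2\pi}{3}$ and $f'(\tfrac{2\pi}{3})>0$, so $L|_U$ is strictly positive definite. On $V$, writing $c_i=c_{k(i)}$, the form reduces to $2\sum_{k<\ell}m_k m_\ell (c_k-c_\ell)^2 f'(d_{k\ell})$ with $d_{12}=2\theta^\ast$ and $d_{13}=d_{23}=\theta^\ast$; this is the Laplacian of the connected triangle with strictly positive edge weights, hence positive semidefinite with kernel $\mathrm{span}(1,1,1)$, corresponding to the global rotation $[1,\dots,1]^\top$. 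Combining the two blocks, $L\succeq0$ with one-dimensional kernel $\mathrm{span}[1,\dots,1]^\top$, which gives item (2). The main obstacle is the asymmetric-mass case: one must run the intermediate-value step for $G$ with enough quantitative control that $\theta^\ast$ lands in the neighborhood where both $f'(\theta^\ast)$ and $f'(2\theta^\ast)$ stay positive — this is exactly where $n\ge C_\beta$ and the sharp threshold $\beta<-\tfrac23$ enter — and one must check that the block decomposition of $L$ survives when $\mathrm{diag}(D)$ is no longer a scalar matrix; both points are routine but require care.
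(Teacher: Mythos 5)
Your proposal is correct and follows essentially the same route as the paper: the same three-cluster configuration (masses $\lfloor n/3\rfloor$ at $\pm\theta$ and $n-2\lfloor n/3\rfloor$ at $0$), the same scalar balance equation with a root near $\tfrac{2\pi}{3}$, and the same observation that $f'(\tfrac{2\pi}{3})>0$ exactly when $\beta<-\tfrac23$, so that the Hessian is minus a weighted graph Laplacian with strictly positive weights, hence negative semidefinite with kernel spanned by $[1,\ldots,1]^{\top}$. Your quantitative intermediate-value step and the $U\oplus V$ block decomposition are just more explicit versions of what the paper asserts directly (the paper simply states $\alpha=\tfrac{2\pi}{3}+o_n(1)$ and reads off definiteness from the quadratic form $-\tfrac12\sum_{i,j}f'(x_i-x_j)(v_i-v_j)^2$), so there is no substantive difference.
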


\begin{proof}
Suppose $n\geq 3$. Let $p_1=0$, $p_2=\alpha$, and $p_3=2\pi-\alpha$, where $\alpha$ is an element of $(0,\pi)$ such that 
\[
\left\lfloor\frac{n}{3}\right\rfloor\sin(2\alpha)e^{\beta\cos(2\alpha)}  + \left(n-2\left\lfloor\frac{n}{3}\right\rfloor\right)\sin(\alpha)e^{\beta\cos(\alpha)} = 0;
\]
as $n\rightarrow\infty$, we can set $\alpha=\frac{2\pi}{3}+o_n(1)$ and in particular we can set $\alpha=\frac{2\pi}{3}$ when $n$ is a multiple of $3$. 

The
Hessian of the energy function $E$ is the Laplacian $L$ of $[-f'(\mathbf{x}_i-\mathbf{x}_j)]_{i,j=1}^n$.  Recall that $f'(x)=(\cos(x)-\beta\sin(x)^2)e^{\beta(\cos(x)-1)}$ and the Laplacian of a symmetric $n\times n$ matrix $A$ is $D-A$, where $D$ is the diagonal matrix with diagonal $[\sum_{i=1}^n A_{ij}]_{1\leq j\leq n}$. Furthermore, for a vector $v$, 
\[
v^{\top}Lv = -\sum_{i,j=1}^n \frac{1}{2}f'(\mathbf{x}_i-\mathbf{x}_j)(v_i-v_j)^2.
\]
However, we always have that $|\mathbf{x}_i-\mathbf{x}_j| \in \{0, \alpha, 2\pi-\alpha\}$. Since $\beta<-\frac{2}{3}$ and $\alpha=\frac{2\pi}{3}+o_n(1)$, $f'(x)>0$ for all $x\in\{0,\alpha,2\pi-\alpha\}$ when $n$ is sufficiently large; if $n$ is a multiple of $3$, we can set $\alpha=\frac{2\pi}{3}$ to obtain that $f'(x)>0$ for all $x\in\{0, \alpha, 2\pi-\alpha\}$. Thus, condition 2 is satisfied when $n$ is a multiple of $3$ or sufficiently large.
\end{proof}

\end{document}